\begin{document}
\theoremstyle{plain}
\newtheorem{thm}{Theorem}[section]
\newtheorem{prop}[thm]{Proposition}
\newtheorem{lem}[thm]{Lemma}
\newtheorem{clry}[thm]{Corollary}
\newtheorem{deft}[thm]{Definition}
\newtheorem{hyp}{Assumption}
\newtheorem*{KSU}{Theorem (Kenig, Sj\"ostrand and Uhlmann)}
\newtheorem*{helgason}{Helgason's support theorem}
\newtheorem*{microhelgasonholgrem}{Microlocal Helgason-Holmgren Theorem}
\newtheorem*{cohelgason}{Microlocal Helgason's theorem}
\newtheorem*{kashiwara}{Kashiwara's Watermelon theorem}

\theoremstyle{definition}
\newtheorem{rem}[thm]{Remark}
\numberwithin{equation}{section}
\newcommand{\eps}{\varepsilon}
\newcommand{\e}{\mathrm{e}}
\renewcommand{\phi}{\varphi}
\renewcommand{\d}{\partial}
\newcommand{\dd}{\mathrm{d}}
\newcommand{\re}{\mathop{\rm Re} }
\newcommand{\im}{\mathop{\rm Im}}
\newcommand{\R}{\mathbf{R}}
\newcommand{\C}{\mathbf{C}}

\newcommand{\Sph}{\mathbf{S}}
\newcommand{\N}{\mathbf{N}}  
\newcommand{\Rad}{\mathcal{R}} 
\newcommand{\Hilb}{\mathcal{H}}
\newcommand{\SB}{\mathcal{T}_h}
\newcommand{\w}{\mathcal{W}} 
\renewcommand{\div}{\mathop{\rm div}}
\newcommand{\id}{\mathop{\rm Id}}
\newcommand{\D}{\mathcal{C}^{\infty}_0} 
\newcommand{\E}{\mathcal{E}}
\newcommand{\supp}{\mathop{\rm supp}}
\newcommand{\ch}{\mathop{\rm ch}}
\newcommand{\musupp}{\mathop{\rm \upmu supp}\nolimits_{\rm A}}
\newcommand{\WF}{\mathop{\rm WF}\nolimits_{\rm A}}
\newcommand{\bou}{\partial\Omega}
\newcommand{\norm}[3]{\left\|#1\right\|^{#2}_{#3}}  % norm
\newcommand{\duality}[2]{\left\langle #1 \Big| #2 \right\rangle} % duality brakets
\newcommand{\B}{\mathrm{B}}
\newcommand{\F}{\mathrm{F}}

\newcommand{\tr} {\mathbf{tr}}

\newcommand{\comment}[1]{\marginpar{\footnotesize #1}}

\title[]{Stability estimates for the Radon transform with restricted data and applications}
\author[]{Pedro Caro \and David Dos Santos Ferreira \and Alberto Ruiz }
\address{Department of Mathematics and Statistics, University of Helsinki, Helsinki, Finland}
\email{pedro.p.caro@gmail.com}
\address{Universit\'e Paris 13, Cnrs, Umr 7539 Laga, 99, avenue Jean-Baptiste Cl\'ement, F-93430 Villetaneuse, France}
\email{ddsf@math.univ-paris13.fr}
\address{Departamento de Matem\'aticas, Universidad Aut\'onoma de Madrid, Campus de Cantoblanco, 28049 Madrid, Spain}
\email{alberto.ruiz@uam.es}
\begin{abstract}
   In this article, we prove a stability estimate going from the Radon transform of  a function with limited angle-distance data to 
   the $L^p$ norm  of the function itself, under  some conditions on the support of the function.   We apply this theorem to 
   obtain stability estimates for an inverse boundary value problem with partial data.
   \end{abstract}
\maketitle

\setcounter{tocdepth}{1} 
\tableofcontents
%
%%%%%%%%%%%%%%%%%%%%%%%%%%%%%%%%%%%%%%%%%%%%%%%%%%%%%%%%%%%%%%
%%%%%%%%%%%%%%%%%%%%%%%%%%%%%%%%%%%%%%%%%%%%%%%%%%%%%%%%%%%%%%
%             INTRODUCTION                             
%%%%%%%%%%%%%%%%%%%%%%%%%%%%%%%%%%%%%%%%%%%%%%%%%%%%%%%%%%%%%%
%%%%%%%%%%%%%%%%%%%%%%%%%%%%%%%%%%%%%%%%%%%%%%%%%%%%%%%%%%%%%%
%
\begin{section}{Introduction}
In this work we prove  a stability estimate from the  Radon transform with  limited angle-distance data to a local $L^p$-norm of the function. Our original  motivation to study this problem was to obtain stability estimates for the inverse problem in electric impedance tomography (E.I.T.) 
proposed by Calder\'on. Nevertheless, we think that the results obtained on the Radon transform restricted to some partial data sets are interesting 
by themselves and are the main contribution of this  work.
   
Calder\'on's inverse problem  deals with the  recovery  of  a conductivity $\gamma$ in the interior of  a smooth domain $\Omega$
from boundary measurements  realized  by the  Dirichlet-to-Neumann map. Let $u$ be the solution of the Dirichlet boundary value problem 
\begin{align}
\label{Intro:DirichletCond}
     \begin{cases}
           \div(\gamma \nabla u) =0  & \text{ in } \Omega \\ u |_{\d \Omega} = f \in H^{\frac{1}{2}}(\d \Omega)
     \end{cases}
\end{align}
where  $\gamma$ is a positive function of class $\mathcal{C}^2$ on $\bar{\Omega}$.
The Dirichlet-to-Neumann map assigns to a function $f \in H^{\frac{1}{2}}(\d \Omega)$ on the boundary the corresponding
Neumann data of \eqref{Intro:DirichletCond}
     $$ \Lambda_{\gamma} f = \gamma \d_{\nu} u|_{\d\Omega} $$
where $\d_{\nu}$ denotes the exterior normal derivative of $u$.
This is a bounded operator $\Lambda_{\gamma}:H^{\frac{1}{2}}(\d \Omega)
\to H^{-\frac{1}{2}}(\d \Omega)$ --- in fact a pseudodifferential operator of order $1$ when $\gamma$ is smooth. 
The inverse problem formulated by Calder\'on \cite{C} is whether it is possible to determine $\gamma$ from
$\Lambda_{\gamma}$. In fact in its initial formulation, the problem concerns only positive measurable conductivities bounded from above,
and it was solved in dimension $2$ in this degree of generality by Astala and P\"aiv\"arinta \cite{AP}  and remains so far open in higher dimensions.

  This question is related to the inverse problem of determining a bounded potential $q \in L^{\infty}(\Omega)$ in the Schr\"odinger equation 
\begin{align}
\label{Intro:DirichletSchrod}
     \begin{cases}
            -\Delta u + q u=0  & \text{ in } \Omega \\ u|_{\d \Omega} = f \in H^{\frac{1}{2}}(\d \Omega),
     \end{cases}
\end{align}
from boundary  measurements. This reduction was exploited by Sylvester and Uhlmann in \cite{SyU} and in combination with the boundary
determination results on the conductivity obtained by Kohn and Vogelius \cite{KV} allowed them to solve the Calder\'on problem
for smooth conductivities in dimension $n \geq 3$. When $0$ is not a Dirichlet eigenvalue of the Schr\"odinger 
operator $-\Delta +q$, the measurements are implemented by the Dirichlet-to-Neumann map, which can be similarly defined 
as for the conductivity equation by
      $$ \Lambda_{q} f = \d_{\nu} u|_{\d\Omega}. $$
With a slight abuse of notations, we use the convention that whenever the subscript contains the letter $q$, the notation
refers to the Dirichlet-to-Neumann map related to the Schr\"odinger equation \eqref{Intro:DirichletSchrod}, while
if it contains the letter $\gamma$ it refers to the map related to the conductivity equation \eqref{Intro:DirichletCond}.

In the inverse problem with partial data one wonders whether one or the other of the Dirichlet-to-Neumann maps $\Lambda_{\gamma},\Lambda_q$
measured only on a subset of the boundary, determines the conductivity $\gamma$ or the electric potential $q$ inside $\Omega$. 
In dimension two, this problem is settled by the articles \cite{IUY1, IUY2} of Imanuvilov, Uhlmann and Yamamoto
using ideas from Bukhgeim \cite{B} who dealt with  the inverse problem for the Schr\"odinger equation with full data. See \cite{GZ} for this problem on Riemann surfaces.
In dimension higher than three, the first results were obtained by Bukhgeim and Uhlmann  \cite{BU} but 
required measurements on roughly half of the boundary. The results obtained by Kenig, Sj\"ostrand and Uhlmann 
\cite{KSU} are the most precise so far in dimension $n \geq 3$ since they require measurements on small subsets
of the boundary for, say, strictly convex domains $\Omega$. This result has been extended to the Dirac system by Salo and Tzou in \cite{SaTz}. 
We should  also mention the local inverse problem, in which all the measurement are restricted  to input Dirichlet  data supported on the  same (the accesible boundary) subset as    the   output measurements. This problem was settled by Imanuvilov, Uhlmann and Yamamoto \cite{IUY1, IUY2} in dimension $n=2$
and only for very special cases (the complement of the accesible boundary being a piece of a plane or  a sphere)  in dimension $n \geq 3$ by Isakov \cite{Is}
and extended to Maxwell equation in \cite{CaSaOl} and \cite{Ca11}. The linearized inverse Calder\'on problem with partial
data was studied in \cite{DKSU2}.

 Let us describe Bukhgeim and Uhlmann result %, with the additional support restriction on  the  data $f$, 
   in more details. For this purpose, given  a direction $\xi \in \Sph^{n-1}$, we    consider  the $\xi$-illuminated  face of $\d\Omega$
\[\partial  \Omega_-(\xi) =\{x\in \partial \Omega: \langle \xi,\nu(x)\rangle \leq 0\}\]
and the $\xi$-shadowed  face
\[\partial  \Omega_+(\xi) =\{x\in \partial \Omega:  \langle \xi,\nu(x)\rangle \geq 0\},\]
where $\nu(x)$ is the exterior normal vector at $ x $.
 \begin{thm}[Bukgheim and Uhlmann \cite{BU}]
      Let $\Omega$ be a bounded open set in $\R^n, n \ge 3$ with smooth boundary 
      and  let us consider $F%,\tilde{B}
      \subset \d\Omega $ %(two) 
      an open neighborhood   %( respectively) 
      of the face  $\d \Omega_-(\xi)$. %and $\d  \Omega_+(\xi)$.
       Let $q_1,q_2$ be two bounded potentials on $\Omega$, suppose that $0$ is 
      neither a Dirichlet eigenvalue of the Schr\"odinger operator $-\Delta+q_1$ nor of $-\Delta+q_2$, and that for all
      $f  \in H^{1/2}(\d\Omega)$    % supported in $\tilde{B}$
       the two Dirichlet-to-Neumann maps coincide on $F$
           $$ \Lambda_{q_1}f {|_{F} }= \Lambda_{q_2}f{|_{F } }, $$         then the two potentials agree $q_1=q_2$.
\end{thm}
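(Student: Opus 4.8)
The plan is to follow Bukhgeim and Uhlmann \cite{BU}: combine the complex geometric optics solutions of Sylvester and Uhlmann \cite{SyU} with a Carleman estimate carrying a boundary term that confines the argument to the part of $\d\Omega$ on which the two Dirichlet-to-Neumann maps are assumed to agree. First I would record an integral identity supported on $\d\Omega\setminus F$. Since $0$ is a Dirichlet eigenvalue of neither Schr\"odinger operator, any $u_1$ solving $(-\Delta+q_1)u_1=0$ in $\Omega$ determines a unique $\tilde u_1$ with $(-\Delta+q_2)\tilde u_1=0$ and $\tilde u_1|_{\d\Omega}=u_1|_{\d\Omega}=:f$; the difference $v=u_1-\tilde u_1$ then solves $(-\Delta+q_2)v=(q_2-q_1)u_1$ in $\Omega$ with $v|_{\d\Omega}=0$ and $\d_\nu v|_{\d\Omega}=\Lambda_{q_1}f-\Lambda_{q_2}f$, which vanishes on $F$ by hypothesis. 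Pairing $v$ against any $u_2$ solving $(-\Delta+q_2)u_2=0$ via Green's formula --- with the bilinear pairing, so that complex-valued potentials cause no trouble --- gives
\[ \int_\Omega (q_1-q_2)\,u_1\,u_2\,\dd x \;=\; \int_{\d\Omega\setminus F}(\d_\nu v)\,u_2\,\dd\sigma . \]
The game is then to choose $u_1$ and $u_2$ so that the right-hand side tends to $0$ in a limiting procedure while the left-hand side reconstructs $q_1-q_2$.

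The core of the argument is a Carleman estimate. With $\xi$ the direction attached to $F$ and the linear weight $\phi(x)=\langle x,\xi\rangle$, I would establish that for all $w\in H^2(\Omega)$ with $w|_{\d\Omega}=0$ and all small $h>0$,
\[ \frac1C\Big(h^2\|\e^{-\phi/h}w\|_{L^2(\Omega)}^2 + h^3\!\!\int_{\d\Omega_+(\xi)}\!\!\langle\xi,\nu\rangle\,|\e^{-\phi/h}\d_\nu w|^2\,\dd\sigma\Big) \le \big\|\e^{-\phi/h}h^2(-\Delta+q_2)w\big\|_{L^2(\Omega)}^2 + C h^3\!\!\int_{\d\Omega_-(\xi)}\!\!|\langle\xi,\nu\rangle|\,|\e^{-\phi/h}\d_\nu w|^2\,\dd\sigma, \]
the exact powers of $h$ being irrelevant. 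This follows by conjugating $-h^2\Delta$ by $\e^{-\phi/h}$ and integrating by parts: for a linear weight the interior commutator vanishes, the interior term $\|\e^{-\phi/h}w\|$ survives because the conjugated operator has a characteristic set of codimension two, the integration by parts leaves a boundary term proportional to $\int_{\d\Omega}\langle\xi,\nu\rangle|\e^{-\phi/h}\d_\nu w|^2\,\dd\sigma$, and $q_2$ is absorbed for $h$ small; the decisive point is that with this sign the shadowed face $\d\Omega_+(\xi)$ ends up on the coercive side. Applied to $w=v$, the last term vanishes because $\d_\nu v\equiv0$ on $F\supset\d\Omega_-(\xi)$, and the first term on the right is $h^4\|\e^{-\phi/h}(q_1-q_2)u_1\|_{L^2(\Omega)}^2=O(h^4)$ provided $u_1$ is chosen with $\e^{-\phi/h}u_1$ bounded in $L^2(\Omega)$; hence $\int_{\d\Omega_+(\xi)}\langle\xi,\nu\rangle|\e^{-\phi/h}\d_\nu v|^2\,\dd\sigma\le Ch$.

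I would then plug in Sylvester--Uhlmann solutions $u_j=\e^{\rho_j\cdot x}(1+r_j)$ of $(-\Delta+q_j)u_j=0$ with $\rho_j\cdot\rho_j=0$, $\re\rho_1=\xi/h$, $\re\rho_2=-\xi/h$, $\re(\rho_1+\rho_2)=0$ and $\im(\rho_1+\rho_2)=k$ for a prescribed $k\perp\xi$ (possible for small $h$ since $n\ge3$), satisfying $\|r_j\|_{L^2(\Omega)}=O(h)$ and $\|r_j\|_{H^1(\Omega)}=O(1)$. Then $|\e^{-\phi/h}u_1|=|1+r_1|$ and $|\e^{\phi/h}u_2|=|1+r_2|$ are bounded in $L^2(\Omega)$ and, by the trace theorem, in $L^2(\d\Omega)$; since $\d\Omega\setminus F$ is a compact subset of $\{\langle\xi,\nu\rangle>0\}$ we have $\langle\xi,\nu\rangle\ge c_0>0$ there, so Cauchy--Schwarz and the preceding bound give
\[ \Big|\int_{\d\Omega\setminus F}(\d_\nu v)\,u_2\,\dd\sigma\Big| \le \Big(\tfrac1{c_0}\!\!\int_{\d\Omega_+(\xi)}\!\!\langle\xi,\nu\rangle\,|\e^{-\phi/h}\d_\nu v|^2\,\dd\sigma\Big)^{1/2}\Big(\!\int_{\d\Omega}\!|\e^{\phi/h}u_2|^2\,\dd\sigma\Big)^{1/2}\le C\,h^{1/2}\xrightarrow[h\to0]{}0 . \]
As $u_1u_2=\e^{ik\cdot x}(1+r_1)(1+r_2)$ and $\|r_j\|_{L^2(\Omega)}\to0$, the identity forces $\int(q_1-q_2)\e^{ik\cdot x}\,\dd x=0$, i.e. the Fourier transform of $q_1-q_2$ (extended by zero) vanishes on $\xi^\perp$. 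Finally, since $F$ is open and contains $\d\Omega_-(\xi)$, a short compactness argument shows $\d\Omega_-(\omega)\subset F$ for all $\omega$ near $\xi$ on $\Sph^{n-1}$; rerunning the argument for such $\omega$ makes the Fourier transform vanish on an open cone, hence --- it being entire, as $q_1-q_2$ has compact support --- identically. Therefore $q_1=q_2$.

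The hard part will be the Carleman estimate with boundary term, and in particular getting the sign bookkeeping right: one must check that the shadowed face $\d\Omega_+(\xi)$, which is precisely where nothing is known about $\Lambda_{q_1}-\Lambda_{q_2}$ because $F$ only covers (a neighborhood of) the illuminated face, is the face that lands on the coercive side of the estimate, whereas the uncontrolled illuminated face enters only through $\d_\nu v$ and thus harmlessly. A secondary difficulty is that the direction $\xi$ is imposed by $F$, so one weight recovers the Fourier transform of $q_1-q_2$ only on the hyperplane $\xi^\perp$; this is circumvented by perturbing $\xi$ --- legitimate because $F$ is open --- and then appealing to analyticity of the Fourier transform of the compactly supported $q_1-q_2$.
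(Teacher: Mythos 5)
Your proposal is correct and follows essentially the same route as the cited Bukhgeim--Uhlmann argument, whose two ingredients --- the Carleman estimate with the $\langle\xi,\nu\rangle$-weighted boundary terms split over $\partial\Omega_{\pm}(\xi)$ and the complex geometrical optics solutions --- are exactly the tools the paper itself quotes in Section 3; your sign bookkeeping, the $O(h^{1/2})$ control of the boundary term over $\partial\Omega\setminus F$, and the perturbation of $\xi$ combined with analyticity of the Fourier transform of the compactly supported difference are all as in the original proof. The only small inaccuracy is in the heuristic for the interior term of the Carleman estimate: the coercive $\tau^{2}\|e^{-\tau\phi}w\|^{2}$ term comes from the Poincar\'e inequality applied to the transport part $2\tau\,\xi\cdot\nabla$ (using $w|_{\partial\Omega}=0$), rather than from the codimension-two characteristic set, but the estimate as you state it is the correct one.
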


 To describe the uniqueness result  of Kenig, Sj\"ostrand and Uhlmann we need to introduce the appropriate parts of $\d \Omega$. 
 Assume $y_0$ is not in the convex hull of $\Omega$, we define the $y_0$-illuminated face as 
 \[\partial  \Omega_-(y_0) =\{x\in \partial \Omega: \langle x-y_0,\nu(x)\rangle \leq 0\}\]
and the $y_0$-shadowed face as
\[\partial  \Omega_+(y_0) =\{x\in \partial \Omega:  \langle x-y_0,\nu(x)\rangle \geq 0\}.\]
 Note the abuse of notation when writing $ \partial \Omega_\pm(\xi) $ and $ \partial \Omega_\pm(y_0) $, since the former one denotes the $ \xi $-illuminated and $ \xi $-shadowed faces from the direction $ \xi $ while the latter one denotes $ \xi $-illuminated and $ \xi $-shadowed faces from the point $ y_0 $. Then

 \begin{thm}[Kenig, Sj\"ostrand and Uhlmann \cite{KSU}]
      Let $\Omega$ be a bounded open set in $\R^n, n \ge 3$ with smooth boundary 
      and  let us consider $F,B\subset \d\Omega $ two open neighborhoods   respectively of the faces  $\d \Omega_-(y_0)$ and $\d  \Omega_+(y_0)$.
       Let $q_1,q_2$ be two bounded potentials on $\Omega$, suppose that $0$ is 
      neither a Dirichlet eigenvalue of the Schr\"odinger operator $-\Delta+q_1$ nor of $-\Delta+q_2$, and that for all
      $f \in H^{1/2}(\d\Omega)$  supported in $B$
       the two Dirichlet-to-Neumann maps coincide on $F$
          $$ \Lambda_{q_1}f|_{F}  = \Lambda_{q_2}f|_{F} , $$  
      then $q_1=q_2$.
\end{thm}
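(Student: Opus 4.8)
\medskip\noindent\textit{Proof strategy.}\quad The reduction from the conductivity equation to the Schr\"odinger equation being built into the hypotheses, the plan is to run the Sylvester--Uhlmann density argument using complex geometric optics (CGO) solutions with the \emph{logarithmic limiting Carleman weight} $\varphi(x)=\log|x-y_0|$, which is smooth on $\overline{\Omega}$; the hypothesis $y_0\notin\ch(\Omega)$ is what will make the geometry of the level surfaces of $\varphi$ compatible with the faces $\partial\Omega_\mp(y_0)$. The starting point is the orthogonality identity: given $f\in H^{1/2}(\partial\Omega)$ supported in $B$, let $u_1$ solve $(-\Delta+q_1)u_1=0$ with $u_1|_{\partial\Omega}=f$, let $v$ solve $(-\Delta+q_2)v=0$ with $v|_{\partial\Omega}=f$, and set $w=u_1-v$. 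Then $(-\Delta+q_2)w=(q_2-q_1)u_1$ in $\Omega$, $w|_{\partial\Omega}=0$, and $\partial_\nu w|_F=0$ since the two Dirichlet-to-Neumann maps agree on $F$; pairing with any $u_2$ solving $(-\Delta+\overline{q_2})u_2=0$ and using Green's formula yields
\begin{equation*}
   \int_\Omega (q_1-q_2)\,u_1\,\overline{u_2}\;\dd x \;=\; -\int_{\partial\Omega\setminus F}\partial_\nu w\;\overline{u_2}\;\dd S .
\end{equation*}
One then needs the right-hand side to vanish as the CGO parameter $h\to0$ and the left-hand side to range over enough quantities to determine $q_1-q_2$.

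The engine of the proof is a Carleman estimate with boundary terms for the weight $e^{-\varphi/h}$. Since $\nabla\varphi(x)=(x-y_0)/|x-y_0|^2$, the sign of $\partial_\nu\varphi=\langle x-y_0,\nu(x)\rangle/|x-y_0|^2$ on $\partial\Omega$ cuts out exactly the faces $\partial\Omega_\mp(y_0)$, and in the estimate for a function vanishing on $\partial\Omega$ the \emph{uncontrolled} boundary term lives over $\partial\Omega_-(y_0)$. Applied to $w$ that term disappears because $\partial_\nu w$ vanishes on $F\supset\partial\Omega_-(y_0)$; using moreover that $|\partial_\nu\varphi|$ is bounded below on the compact set $\partial\Omega\setminus F\subset\{\partial_\nu\varphi>0\}$, one gets
\begin{equation*}
   \bigl\|e^{-\varphi/h}\,\partial_\nu w\bigr\|_{L^2(\partial\Omega\setminus F)}\;\lesssim\; h^{1/2}\,\bigl\|e^{-\varphi/h}\,(q_2-q_1)u_1\bigr\|_{L^2(\Omega)} .
\end{equation*}

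Next I would construct the CGO solutions
\begin{equation*}
   u_1=e^{(\varphi+i\psi)/h}(a_1+r_1),\qquad \overline{u_2}=e^{-(\varphi+i\psi)/h}(a_2+r_2),
\end{equation*}
where $\psi$ is conjugate to $\varphi$, that is $|\nabla\psi|=|\nabla\varphi|$ and $\nabla\varphi\cdot\nabla\psi=0$ (an angular function about $y_0$; there is an $(n-1)$-parameter family of these), the amplitudes $a_1,a_2$ both solve the first transport equation $2\nabla(\varphi+i\psi)\cdot\nabla a+\Delta(\varphi+i\psi)\,a=0$, and the remainders satisfy $\|r_j\|_{L^2(\Omega)}=O(h)$ by the interior version of the Carleman estimate. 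Since $e^{\varphi/h}$ is largest near the shadowed face $\partial\Omega_+(y_0)$, the bulk of the Dirichlet data of $u_1$ is concentrated there, so after cutting the amplitude off inside $B$ and absorbing an exponentially small error the data of $u_1$ is supported in $B$ --- this is precisely why $B$ must be a neighbourhood of $\partial\Omega_+(y_0)$. Inserting these solutions into the identity, the exponential weights on the left cancel, so the left-hand side equals $\int_\Omega(q_1-q_2)(a_1+r_1)(a_2+r_2)\,\dd x\to\int_\Omega(q_1-q_2)\,a_1a_2\,\dd x$; on the right-hand side the estimate above, together with $\|e^{\varphi/h}\overline{u_2}\|_{L^2(\partial\Omega\setminus F)}=O(1)$ and $\|e^{-\varphi/h}(q_2-q_1)u_1\|_{L^2(\Omega)}=O(1)$ (again because the exponential weight cancels the CGO phase), gives a bound of order $h^{1/2}$, which tends to $0$. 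Hence $\int_\Omega(q_1-q_2)\,a_1a_2\,\dd x=0$ for all admissible amplitudes and all conjugate phases $\psi$.

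It remains to deduce $q_1=q_2$ from these relations, and this --- together with the boundary Carleman estimate itself --- is where I expect the main difficulty. The transport equation is a $\bar\partial$-type equation in the complex variable $\varphi+i\psi$, so its solutions behave like holomorphic functions of that variable times a fixed smooth factor; inserting in addition an oscillatory factor along a direction annihilated by the transport field manufactures an extra frequency. Collecting the relations $\int_\Omega(q_1-q_2)\,a_1a_2\,\dd x=0$ is then tantamount to the vanishing of a weighted Radon-type transform of $q_1-q_2$ restricted to the family of planes meeting a neighbourhood of $y_0$ --- a limited angle--distance configuration of exactly the type analysed in this paper --- and its injectivity, a consequence of the stability estimate proved below (or, in favourable configurations, of Helgason's support theorem), forces $q_1-q_2=0$. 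The two genuinely delicate points are thus the boundary Carleman estimate with the correct split along $\partial\Omega_\mp(y_0)$, where the assumption that $F$ and $B$ are full neighbourhoods of these faces enters, and the injectivity of the restricted Radon transform that controls the density of CGO products.
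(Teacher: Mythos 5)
You are sketching a theorem that this paper does not prove but quotes from \cite{KSU}; the closest the paper comes is the quantitative three-dimensional analogue of Section \ref{sec:KSU}, and your outline does reproduce the correct skeleton common to \cite{KSU}, \cite{DKSU1} and that section: the Green-formula identity with $w=u_1-v$, the boundary Carleman estimate for the logarithmic weight (dropping the uncontrolled term because $\partial_\nu w=0$ on $F\supset\partial\Omega_-(y_0)$ and $\langle x-y_0,\nu(x)\rangle\geq\varepsilon$ on $\partial\Omega\setminus F$), the CGO solutions with phase $\varphi+i\psi$, $\varphi=\log|x-y_0|$, and the reduction of $\int_\Omega(q_1-q_2)a_1a_2\,dx=0$ to the vanishing of a restricted two-plane/Radon transform. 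Two steps, however, are genuine gaps rather than routine details.

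First, the support condition $\mathrm{supp}\,(u_1|_{\partial\Omega})\subset B$ cannot be obtained by ``cutting the amplitude off inside $B$ and absorbing an exponentially small error.'' If you truncate the trace of $u_1^0=e^{(\varphi+i\psi)/h}(a_1+r_1)$ outside $B$ and correct by the solution $z$ of the $q_1$-equation with the discarded data, that data has size $e^{m/h}$ with $m=\max_{\partial\Omega\setminus B}\varphi$; it is exponentially small only relative to $\max_{\partial\Omega}\varphi$, not relative to the weight elsewhere. Elliptic well-posedness gives $\|z\|_{L^2(\Omega)}\lesssim e^{m/h}$ (no maximum-principle-type propagation of the weighted bound is available), while $\overline{u_2}$ is of size $e^{-\min_{\overline{\Omega}}\varphi/h}$, so the error term $\int(q_1-q_2)z\,\overline{u_2}\,dx$ is of order $e^{c/h}$ with $c>0$: it swamps the main term instead of being negligible. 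Producing CGO-type solutions whose traces vanish on $\partial\Omega\setminus B$ is a substantial part of the actual proofs: in \cite{KSU} it requires a separate argument, and in this paper it is exactly what the Nachman--Street solutions in $H(\Omega;\Delta)$ \cite{NS} (Section \ref{sec:BU}) and Chung's solutions \cite{Ch} (Section \ref{sec:KSU}) provide, where the correction $e^{\tau l}b$ is \emph{comparable} to the main term near $\partial\Omega\setminus B$ and only its interior decay, through $\|e^{-\tau k}\|_{L^2}=O(\tau^{-1/2})$, renders its contribution small.

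Second, the concluding step is underjustified: the identity only yields the two-plane (Radon, for $n=3$) transform over planes through points of a small neighbourhood $N$ of $y_0$ with directions in a restricted set $\Sigma$, and such limited data does not by itself ``force $q_1-q_2=0$.'' In particular, Theorem \ref{th:localLOGstability} of this paper, which you invoke, gives only a \emph{local} bound near $y_0$ under the one-sided support hypothesis (b) --- this is precisely why Theorem \ref{KSUstability} is stated only on a neighbourhood $G$ of the convex penumbra. To obtain $q_1=q_2$ in all of $\Omega$ one needs the additional global argument of \cite{KSU}, \cite{DKSU1}: apply the microlocal Helgason--Holmgren/Kashiwara step at supporting planes of $\mathrm{supp}(q_1-q_2)$ belonging to the admissible family and iterate (moving the pole within $N$, for which one uses that $F$ and $B$ remain neighbourhoods of the faces $\partial\Omega_\mp(y)$ for $y$ near $y_0$) until the support is exhausted. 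Your sketch should either include this sweeping argument or make explicit that it is being taken from \cite{DKSU1}.
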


The main goal of this article is to derive an estimate for the Radon transform which  yields the corresponding stability estimates for the above uniqueness results (actually only  a local estimate in the case  of Theorem 1.2).  This estimate, which  we call    a quantitative version of Helgason-Holmgren theorem, will be obtaned in section 2 (see Theorem \ref{th:localLOGstability}).
Stability estimates for the conductivity inverse problem in dimension higher than three go back  to Alessandrini's article \cite{A}. 
This was followed by results in two dimensions by  Liu \cite{L},
Barcelo, Barcelo and Ruiz \cite{BBR}, Barcelo, Faraco and Ruiz \cite{BFR} and finally by Clop, Faraco and Ruiz \cite{CFR} for discontinuous  conductivities corresponding to the uniqueness results of Astala and P\"aiv\"arinta~\cite{AP} (see also \cite{FRo}). Other stability results for the Calder\'on problem in dimension greater than two are \cite{Hk} and \cite{CaGaRe}. In the case of Maxwell equations the stability was obtained in \cite{Ca10}.

Concerning the inverse problem with partial
data, stability estimates corresponding to the results of Bukhgeim and Uhlmann were derived by Heck and Wang \cite{HW},
and in the presence of a magnetic field by Tzou \cite{Tz}. We mention also the uniqueness results obtained by 
Ammari and Uhlmann \cite{AU} in the case where the potential is known close to the boundary, and the corresponding
stability estimates obtained by  Fathallah \cite{Fa}  and  Ben Joud \cite{BJ}. One single $\log$  stability estimate was obtained  by Alessandrini and Kim \cite{AK} in the case of the conductivity equation when the conductivities  coincide on a neighborhood of the boundary with a known one.
The stability of the local problem under similar condition  as in \cite{Is}  was proved 
by Caro for  Maxwell equations in \cite{Ca11}.

Let $F$ and $B$ be boundary neighborhoods of the illuminated and shadowed faces respectively. The natural norm to consider on the partial  Dirichlet-to-Neumann map is 
\begin{multline*} 
   \|\Lambda_{q}\|_{B \to F}= \sup \Big\{ \duality{\Lambda_q (\phi)}{\psi}:\|\phi\|_{ H^{1/2}(\d\Omega)}=\| \psi\|_{ H^{1/2}(\d\Omega)}=1, 
   \\ \supp \phi \subset B,  \supp \psi \subset F \Big\},
\end{multline*} 
 where     $ \langle \cdot | \cdot \rangle $ denotes the duality between $ H^{1/2} (\partial \Omega) $ and $ H^{-1/2} (\partial \Omega) $.
We'll also have to consider a larger norm related to solutions of  Schr\"odinger equation belonging to the space $H( \Omega,\Delta)$ (see section 3.1).
This norm was considered  by Nachman and  Street  in \cite{NS}, where they prove the reconstruction of some 2-plane integrals of the potential from partial  data. 
We will denote this norm as 
     $$\|\Lambda_{q_1} -\Lambda_{q_2} \|^*_{B\to F}$$
The class  of allowable potentials under consideration will be in Besov spaces 
    $$ {\mathcal{K}}(M, \lambda, p) = \big\{q \in L^{\infty}(\Omega)\text{ and }  q{\mathbf {1}}_\Omega \in W^{\lambda, p}(\R^n) : \|q\|_{L^{\infty}}+\|q\|_{W^{\lambda, p} } \leq M \big\} ,$$  
where $\lambda>0$. This class of potential has the advantage of allowing very rough functions  if $\lambda$ is sufficiently small.
Our stability results are as follows.
\begin{thm}\label{BUstability}
      Let $\Omega$ be a bounded open set in $\R^n, n \ge 3$ with smooth boundary.  
       Given  an open set $N$ in $\Sph^{n-1}$  
       consider  $ {F}, {B}\subset \d\Omega$ two open subsets of the boundary which are respective neighbourhoods 
       of the faces $\d \Omega_-(\xi)$ and $ \d\Omega_+(\xi)$ for all directions $\xi \in N$.   
      Given $M>0$ there exists a constant $C>0$ such that the following estimate holds true
     \begin{equation}
            \|q_1-q_2\|_{L^{p} }\leq C\left( \log\big| \log\|\Lambda_{q_1} -\Lambda_{q_2} \|^*_{B\to F} \big|\right)^{-\lambda/2}
      \end{equation}
      for all allowable potentials $q_1,q_2 \in \mathcal{K}(M, \lambda, p)$ on $\Omega$, with $ 1 \leq p < \infty $ and $ 0 < \lambda < 1/p $, for which  $0$ is 
      neither a Dirichlet eigenvalue of the Schr\"odinger operator $-\Delta+q_1$ nor of $-\Delta+q_2$.
\end{thm}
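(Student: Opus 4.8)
The plan is to reduce the partial-data stability estimate for the Schrödinger inverse problem to the quantitative Helgason–Holmgren theorem (Theorem \ref{th:localLOGstability}) via the standard Sylvester–Uhlmann complex-geometrical-optics (CGO) machinery, tracking constants carefully. The first step is to recall that for a potential $q\in\mathcal{K}(M,\lambda,p)$, extended by zero outside $\Omega$, one can build CGO solutions $u_j$ of $-\Delta u_j+q_ju_j=0$ of the form $u_j=\e^{\zeta_j\cdot x}(1+\psi_j)$ with $\zeta_j\in\C^n$, $\zeta_j\cdot\zeta_j=0$, $|\zeta_j|\sim\tau$ large, and $\|\psi_j\|_{L^2(\Omega)}\lesssim \tau^{-1}$ (with the implied constant depending only on $M$). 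Choosing $\zeta_1,\zeta_2$ so that $\zeta_1+\zeta_2=\mathrm{i}k$ for a prescribed frequency vector $k$ perpendicular to a chosen direction $\omega\in N$, the usual integral identity gives
\begin{equation*}
\int_{\Omega}(q_1-q_2)u_1u_2\,\dd x=\duality{(\Lambda_{q_1}-\Lambda_{q_2})(u_1|_{\d\Omega})}{u_2|_{\d\Omega}},
\end{equation*}
and the geometry of $\zeta_1,\zeta_2$ is arranged (as in Bukhgeim–Uhlmann and Kenig–Sjöstrand–Uhlmann) so that $u_1|_{\d\Omega}$ is essentially supported, up to exponentially small error, in $B$ and $u_2|_{\d\Omega}$ in $F$, which is exactly where $N$ being a set of directions for which $F,B$ work comes in. Pairing against the $H(\Omega,\Delta)$-norm $\|\cdot\|^*_{B\to F}$ is what allows the CGO boundary traces (which are only in $H^{1/2}$ through the equation, not a priori) to be used as test functions.

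**Next,** expanding $u_1u_2=\e^{\mathrm{i}k\cdot x}(1+\psi_1+\psi_2+\psi_1\psi_2)$ and using the bounds on $\psi_j$, the left-hand side becomes $\widehat{q_1-q_2}(-k)$ up to an error of size $O(\tau^{-1})\|q_1-q_2\|_{L^\infty}\lesssim M\tau^{-1}$, while the right-hand side is bounded by $\|\Lambda_{q_1}-\Lambda_{q_2}\|^*_{B\to F}$ times $\e^{C\tau}$ (the exponential growth of the CGO traces in the relevant norm), plus the exponentially small tails from the cutoffs. Hence for every $k$ with $|k|\leq c\tau$ one gets the Fourier-side estimate
\begin{equation*}
\big|\widehat{(q_1-q_2)\mathbf{1}_\Omega}(k)\big|\leq C\Big(\e^{C\tau}\|\Lambda_{q_1}-\Lambda_{q_2}\|^*_{B\to F}+\tau^{-1}\Big).
\end{equation*}
Optimizing in $\tau$ (taking $\tau\sim\log(1/\|\Lambda_{q_1}-\Lambda_{q_2}\|^*)$) yields a bound on the Fourier transform of $q_1-q_2$ on a ball of radius $\sim\log(1/\eps)$, i.e. a single-$\log$ control of low frequencies, $\eps$ being the norm of the difference of DN maps. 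At this point one has effectively reconstructed partial information about the Radon transform of $q_1-q_2$ restricted to hyperplanes with normals in $N$: indeed, the CGO construction with $\zeta_1+\zeta_2=\mathrm{i}k$ and $k\perp\omega$, after integrating over the frequency magnitude, furnishes control of plane integrals $\int_{x\cdot\omega=s}(q_1-q_2)$, which is precisely the data fed into Theorem \ref{th:localLOGstability}.

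**Then** I would invoke the quantitative Helgason–Holmgren estimate of section 2: having a good (polynomial-in-$\eps^{-1}$, or rather exponentially-in-$\log$) bound on the restricted Radon transform of $q_1-q_2$ — a function supported in $\bar\Omega$ and lying in $W^{\lambda,p}$ with norm $\leq 2M$ — Theorem \ref{th:localLOGstability} converts it into an $L^p$ estimate of the form $\|q_1-q_2\|_{L^p}\leq C(\log|\log(\text{Radon data})|)^{-\lambda/2}$. Combining this with the single-$\log$ control obtained from the CGO step, one loses one more logarithm, ending with the claimed triple-composition $\big(\log|\log\|\Lambda_{q_1}-\Lambda_{q_2}\|^*_{B\to F}|\big)^{-\lambda/2}$; the a priori bound $\|q_1-q_2\|_{W^{\lambda,p}}\leq 2M$ is used both to interpolate (so that the low-frequency information suffices, the high frequencies being controlled by $M$) and to apply Theorem \ref{th:localLOGstability} with a uniform constant.

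**The main obstacle** I anticipate is not the abstract reduction but the bookkeeping of the CGO boundary traces in the starred norm: one must verify that $\|u_j|_{\d\Omega}\|$ measured in the relevant $H(\Omega,\Delta)$-type norm grows only like $\e^{C\tau}$ and that the localization of $u_1|_{\d\Omega}$ to $B$ and $u_2|_{\d\Omega}$ to $F$ (modulo $\e^{-c\tau}$) holds with constants depending only on $\Omega$ and $M$ — this is where the Kenig–Sjöstrand–Uhlmann Carleman-estimate technology with boundary terms, or the Bukhgeim–Uhlmann reflection argument, must be imported and quantified, and where the hypothesis that $F,B$ are neighborhoods of $\d\Omega_\mp(\xi)$ simultaneously for all $\xi\in N$ is essential to get a full open cone of usable frequencies $k$. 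A secondary technical point is handling the case $0<\lambda<1/p$ so that $q\mathbf{1}_\Omega$ genuinely lies in $W^{\lambda,p}(\R^n)$ without a jump obstruction, which is exactly the restriction imposed in the statement.
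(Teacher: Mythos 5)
The decisive gap is in your treatment of the support constraints built into the norm $\|\cdot\|^*_{B\to F}$. By its definition, this norm only controls pairings $\duality{(\Lambda_{q_1}-\Lambda_{q_2})\phi}{\psi}$ with $\phi$ \emph{exactly} supported in $B$ and $\psi$ \emph{exactly} supported in $F$; a CGO trace that is only ``essentially supported up to an exponentially small error'' cannot be used, because the leftover piece is paired with $(\Lambda_{q_1}-\Lambda_{q_2})$ acting on data of size $\e^{C\tau}$, and there is no reason for the $\e^{-c\tau}$ gain to beat the $\e^{C\tau}$ loss. Moreover the premise itself is false: for $u=\e^{-\tau\langle \xi+i\zeta,x\rangle}(1+\psi)$ the size of the trace is governed by $\e^{-\tau\langle\xi,x\rangle}$, i.e.\ by the sign of $\langle\xi,x\rangle$, whereas $B$ and $F$ are neighbourhoods of the faces determined by the sign of $\langle\xi,\nu(x)\rangle$, so a standard Sylvester--Uhlmann solution is in no sense small outside $B$ (or $F$). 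Neither Bukhgeim--Uhlmann nor Kenig--Sj\"ostrand--Uhlmann localize the traces of their solutions: they restrict only the measurement side and absorb the unknown boundary term through a boundary Carleman estimate. What this theorem actually needs --- and what the paper imports from Nachman and Street (Theorem \ref{NSsolution}) --- are solutions $w_\tau\in H(\Omega;\Delta)$ of $(-\Delta+q)w_\tau=0$ whose traces $\tr_0 w_\tau$ lie in $\mathcal{H}(\partial\Omega)\cap\mathcal{E}'(B)$ (resp.\ $\mathcal{E}'(F)$) exactly, constructed by a separate Carleman/duality argument; this is the missing ingredient in your proposal and cannot be waved in from the uniqueness papers.

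A consequence is that your Fourier-side step is not available as stated: for the admissible solutions the exponentials cancel identically (the amplitude is forced to be a function $g(x'')$ of the variables orthogonal to the plane $[\xi,\zeta]$, not a free oscillation $\e^{ik\cdot x}$), so one does not read off $\widehat{q_1-q_2}(k)$ for all $|k|\le c\tau$ but rather the weighted $2$-plane integrals of Proposition \ref{PropDualEst}, tested against $H^2$ amplitudes. The paper then turns this dual (negative-Sobolev) control of $\Rad q$ in the directions $\eta\in\cup_{\xi\in N}\xi^\perp$ into the quantity $\int(1+|s|)^n\|\Rad_{y_0}q(s,\cdot)\|_{L^1(\Gamma)}\,\dd s$ by interpolating with the a priori estimate $\|\Rad q\|$ in $L^2_sH^{(n-1)/2}$ bounded by $\|q\|_{L^2}$, and only then verifies the hypotheses of Theorem \ref{th:localLOGstability}: a supporting hyperplane with normal in the controlled set of directions, $y_0\in\supp q$ on it, and $\alpha$ large enough that the ball $G$ of \eqref{abierto} contains $\supp q$, so the local estimate becomes the stated global $L^p$ bound. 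These last steps, and the final choice $\tau\sim|\log\|\Lambda_{q_1}-\Lambda_{q_2}\|^*_{B\to F}|$, agree with your outline; the genuine missing idea is the construction of solutions with exactly supported traces compatible with $\|\cdot\|^*_{B\to F}$, which is not bookkeeping.
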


Next we consider the case of illumination from  a point. Let $N$ be  an open set which does not cut the convex hull of $\Omega$.  We will define $P$, the convex penumbra boundary from $N$, as the  set of points $x \in \d\Omega$ such that there exist a $y\in N$ with $\langle x-y, \nu(x)\rangle =0$ and the hyperplane through $x$ normal to $\nu(x)$    being a supporting hyperplane  of $\Omega$. In order to keep the exposition simple, and relate to the Radon transform
(rather than the two-plane transform in high dimensions) we  will  restrict ourselves to the three dimensional case  $n=3$.
 \begin{thm}\label{KSUstability}
      Let $\Omega$ be a bounded open set in $\R^3$ with smooth boundary.  
       Given  an open set $N$ in $\R^{3}$ which does not cut the convex hull of $\Omega$,
      consider two open subsets $ {F}, {B}$ of the the boundary which are respective neighbourhoods  of the  faces $ \partial \Omega_-(y)$ and $\partial \Omega_+(y)$
      for all $y \in N$. Given $M>0$, there exist an open neighborhood $ G \subset \R^{3}$ of the convex penumbra $P$ and a constant $C>0$ such that the following estimate holds true
     \begin{equation}
            \|q_1-q_2\|_{L^p(G) }\leq C\left(\log  \big|\log\|\Lambda_{q_1} -\Lambda_{q_2} \|_{B\to F} \big|\right)^{-\lambda/2}
      \end{equation}
      for all allowable potentials $q_1,q_2 \in \mathcal{K}(M, \lambda, p)$ on $\Omega$, with $ 1 \leq p < \infty $ and $ 0 < \lambda < 1/p $, for which  $0$ is 
      neither a Dirichlet eigenvalue of the Schr\"odinger operator $-\Delta+q_1$ nor of $-\Delta+q_2$.
\end{thm}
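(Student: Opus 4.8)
The plan is to reduce the estimate, via complex geometric optics, to a bound on the (limited-angle) Radon transform of $q=q_1-q_2$, and then to feed that bound into the quantitative Helgason--Holmgren theorem, Theorem~\ref{th:localLOGstability}, which converts it into a local $L^p$ estimate. Throughout, the hypothesis that $0$ is not a Dirichlet eigenvalue ensures that the partial Dirichlet-to-Neumann maps are well defined; since $N$ does not cut the convex hull of $\Omega$, the logarithmic weights $\varphi_y(x)=\log|x-y|$, $y\in N$, are smooth limiting Carleman weights on a neighbourhood of $\bar\Omega$, with $\e^{\varphi_y/h}$ largest on the $y$-shadowed face (inside $B$) and $\e^{-\varphi_y/h}$ largest on the $y$-illuminated face (inside $F$). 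Working in dimension $n=3$ means the $2$-plane transform that arises below is exactly the Radon transform; a Kelvin transform centred at $y$ straightens $\varphi_y$ into a linear weight and sends $\Omega$ to a bounded domain, which is how point illumination eventually produces the plane geometry recorded by the convex penumbra $P$.

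First I would carry out the complex geometric optics construction of \cite{KSU}, keeping track of all norms. For each small $h>0$, each $y\in N$, and each admissible pair of phases, the limiting Carleman estimates of Kenig, Sj\"ostrand and Uhlmann for the weights $\pm\varphi_y/h$ --- in interior and in boundary form --- yield solutions
\begin{equation*}
  u_1=\e^{(\varphi_y+i\psi)/h}(a_1+r_1),\qquad u_2=\e^{(-\varphi_y+i\tilde\psi)/h}(a_2+r_2)
\end{equation*}
of $(-\Delta+q_j)u_j=0$ in $\Omega$, where $a_j$ solve the associated transport equations, $\|r_j\|_{L^2(\Omega)}=O(h)$, and the traces $u_j|_{\bou}$, $\d_\nu u_j|_{\bou}$ are, up to errors of size $O(\e^{-c/h})$, supported in $B$ (for $j=1$) and in $F$ (for $j=2$); the uniform bound $q_1,q_2\in\mathcal{K}(M,\lambda,p)\subset L^\infty(\Omega)$ makes these estimates uniform. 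The Alessandrini-type integral identity then gives
\begin{equation*}
  \int_\Omega q\,u_1 u_2\,\dd x=\duality{(\Lambda_{q_1}-\Lambda_{q_2})\big(u_1|_{\bou}\big)}{u_2|_{\bou}}+O\!\big(\e^{-c/h}\big),
\end{equation*}
and since the boundary norms of $u_1,u_2$ grow at most like $\e^{C/h}$, the right-hand side is bounded by $C\,\e^{C/h}\,\|\Lambda_{q_1}-\Lambda_{q_2}\|_{B\to F}$ (the exponentially small corrections are absorbed using that on the ``wrong'' face the weights are smaller by a definite factor, thanks to $B,F$ being genuine neighbourhoods of the faces).

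Next I would extract the Radon transform from the interior integral. There the exponential weights cancel, $u_1u_2=\e^{i(\psi+\tilde\psi)/h}\big(a_1a_2+O(h)\big)$, and the remaining oscillatory integral is analysed by stationary phase: as $h\to 0$ it equals $h^{\alpha}$ times a Radon transform $\Rad q(\pi)$, against a fixed nonvanishing window, over a $2$-plane $\pi$, plus a controlled error $O(h^{\alpha+1})$, the plane $\pi$ running over an open family $\mathcal{A}$ whose dependence on $y$ and on the admissible phases is exactly the family that realises the convex penumbra $P$ of $N$. Combined with the previous bound this gives, for all $\pi\in\mathcal{A}$,
\begin{equation*}
  |\Rad q(\pi)|\leq C\,h^{-\alpha}\big(\e^{C/h}\,\|\Lambda_{q_1}-\Lambda_{q_2}\|_{B\to F}+h^{\alpha+1}\big),
\end{equation*}
and optimising in $h$ --- taking $h$ comparable to $\big|\log\|\Lambda_{q_1}-\Lambda_{q_2}\|_{B\to F}\big|^{-1}$ --- yields
\begin{equation*}
  \sup_{\pi\in\mathcal{A}}|\Rad q(\pi)|\leq C\,\big|\log\|\Lambda_{q_1}-\Lambda_{q_2}\|_{B\to F}\big|^{-c}
\end{equation*}
for some $c>0$, the single logarithm being the one typical of stability for the Schr\"odinger inverse problem.

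Finally I would invoke Theorem~\ref{th:localLOGstability}. Extending $q=q_1-q_2$ by zero we have $\supp q\subset\bar\Omega$ and $\|q\|_{L^\infty}+\|q\|_{W^{\lambda,p}(\R^3)}\leq 2M$; since $0<\lambda<1/p$ and the Radon transform of $q$ is as small as above over the limited-angle family $\mathcal{A}$, the quantitative Helgason--Holmgren theorem produces an open neighbourhood $G$ of the convex penumbra $P$ and a constant (depending only on $M,\Omega,N,p,\lambda$) with
\begin{equation*}
  \|q\|_{L^p(G)}\leq C\Big(\log\tfrac{1}{\sup_{\mathcal{A}}|\Rad q|}\Big)^{-\lambda/2}\leq C\big(\log\big|\log\|\Lambda_{q_1}-\Lambda_{q_2}\|_{B\to F}\big|\big)^{-\lambda/2},
\end{equation*}
where the last step uses $\log(|\log\eps|^{c})=\log c+\log|\log\eps|$ and absorbs constants (the estimate being trivial when $\|\Lambda_{q_1}-\Lambda_{q_2}\|_{B\to F}$ is bounded away from $0$). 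This is the claimed bound. The step I expect to be the main obstacle is the quantitative stationary-phase analysis of the interior integral: one must show that the $O(1)$ amplitudes --- in particular the CGO remainders $r_j$, which are only $O(h)$ in $L^2$ --- pollute the main term by no more than $O(h)$, identify the windowed Radon transform that survives, and, crucially, verify that the family $\mathcal{A}$ it ranges over is open and coincides with the one defining $P$, so that Theorem~\ref{th:localLOGstability} delivers precisely the neighbourhood $G$; the bookkeeping of the exponentially small corrections needed to keep the traces supported in $B$ and $F$ is a further technical point.
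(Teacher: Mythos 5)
Your overall architecture (CGO solutions, Alessandrini identity, reduction to a limited Radon transform bound, then Theorem \ref{th:localLOGstability}) matches the paper, but two of your key steps are not justified and one rests on an incorrect mechanism. The serious one is your treatment of the partial-data norm: you assert that the traces of $u_1,u_2$ are supported in $B$, resp.\ $F$, ``up to errors $O(\e^{-c/h})$'', to be absorbed because ``on the wrong face the weights are smaller by a definite factor''. For the logarithmic weight $\varphi_y(x)=\log|x-y|$ this is false: the faces $\partial\Omega_{\pm}(y)$ are distinguished by the sign of $\langle x-y,\nu(x)\rangle$, not by the size of $|x-y|$, and both faces contain points at comparable distance from $y$, so $\e^{\pm\tau\varphi_y}$ has no uniform gain on the unmeasured part of the boundary (and no gain at all near the penumbra $Z(y)$). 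The paper's proof handles this differently and this is its technical core: Chung's solution $v_1$ vanishes \emph{exactly} on a compact set $E\supset\partial\Omega\setminus\overline{B}$, so the input is genuinely supported in $\overline B$; the unmeasured Neumann contribution is isolated as $\int_{\partial\Omega\setminus F_\varepsilon}|\partial_\nu(v_1-w_2)||v_2|\,\dd A$ with $w_2$ the $q_2$-solution matching $v_1$ on the boundary; this is controlled by the KSU boundary Carleman estimate on the set where $\langle x-y,\nu(x)\rangle\geq\varepsilon$, leaving an $L^2(\partial\Omega_-(y))$ term which is then bounded by a duality trick (testing against an auxiliary solution $\tilde w_2$ with data $\chi\overline{\partial_\nu(v_1-w_2)}$) to produce a factor $\|\Lambda_{q_1}-\Lambda_{q_2}\|^{1/2}\e^{c\tau}$. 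Without this Carleman-plus-duality argument your reduction from the $H^{1/2}\to H^{-1/2}$ partial norm to a Radon bound does not go through.

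The second gap is in the interior term. There is no stationary phase to perform: the phases are chosen exactly opposite ($\phi_1=-\phi_2$, $\psi_1=-\psi_2$), the exponentials cancel identically, and $\int_{\tilde\Omega} q\,a_1a_2\,\dd x$ \emph{equals} (after a change of variables) the $2$-plane transform of $q$ tested against a smooth window $g$ on $\Sph^{n-2}$. More importantly, because the remainder estimates cost $\|g\|_{H^1}+\|\Delta_{\Sph^{n-2}}g\|_{L^2}$, what one obtains is only a bound on $\|Rq(y,\theta,\cdot)\|_{H^{-2}(S_\theta)}$, not the pointwise bound $\sup_{\pi\in\mathcal A}|\Rad q(\pi)|$ you claim. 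To reach the $L^1\big((-\alpha,\alpha)\times\Gamma\big)$ smallness of $\Rad_{x_0}q$ required by Theorem \ref{th:localLOGstability} (recentred at a penumbra point $x_0\in P$ where the supporting-plane condition (b) holds), the paper must interpolate the $H^{-2}$ bound against the a priori estimate $\|\Rad(xq)\|_{H^1(\R\times\Sph^2)}\lesssim\|(1+|x|)q\|_{L^2}$ and then carry out the change of variables from the redundant $(y,\theta,\eta)$ parametrisation to standard Radon coordinates $(s,\omega)$ near $\nu(x_0)$; you flag this identification of the plane family as ``the main obstacle'' but do not supply it, and your sup-norm shortcut would bypass exactly the step that makes the argument work.
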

The proofs of these theorems will be carried out by using  the   approach of  \cite{DKSU1},  which uses the  Radon transform. 
One can see that the result on the Radon transform is  general enough to be applied to get stability for partial data in the  
context of \cite{KSU} in dimension three (the Dirichlet-to-Neumann  map in this case controls the 2-plane transform, which   
is indeed the Radon transform in three dimensions). This can be achieved with the natural norm   $H^{1/2}(\d\Omega) \to H^{-1/2}(\d\Omega)$ 
of the Dirichlet-to-Neumann map, by using the solutions constructed by Chung in \cite{Ch}. 

Theorem \ref{BUstability} was proved in \cite{HW} by Heck and Wang, without the condition  of the   Dirichlet data being supported on $B$ and the norm in the partial Dirichlet-to-Neumann  map  considered from $H^{3/2}(\partial \Omega)$ to $ H^{1/2}(\partial \Omega) $ instead of the norm $\|\cdot\|^*_{B\to F}$. They  use the Fourier transform. The change to  the Radon transform illustrates the use of Theorem \ref{th:localLOGstability}.

The structure of this paper is as follows. In section \ref{sec:radon}, 
we  will state and prove the  theorem for the  Radon  transform which is  the main result in this  work.
Section \ref{sec:BU} and Section \ref{sec:KSU} are devoted to prove Theorem \ref{BUstability} and Theorem \ref{KSUstability} applying the stability estimates for the Radon transform proven in Section \ref{sec:radon}.

\subsection*{Acknowledgements} 
The authors would like to thank MSRI and the organisers
of the   program on Inverse Problems 2010 for allowing them to benefit from the outstanding research environment in Berkeley.
The project was continued during the Inverse Problems semester at the UAM-ICMAT in Madrid and  at the  Isaac Newton Institute in Cambridge during 2011.
The authors  wish  to thank these institutions for their hospitality.  We would like to thank Giovanni Alessandrini, Jan Boman, Adrian Nachman, Plamen Stefanov and Gunther Uhlmann for conversations  and some  suggestions that improved our original result. PC has been funded by IT-305-07 and 267700 - InvProb along this project.
DDSF's  visit to Madrid was made possible by the project \textit{Inverse Problem and Scattering in Madrid} (CSD2006-00032). He  wishes  to thank Carlos Kenig, Johannes Sj\"ostrand and Gunther Uhlmann for the past discussions on the Watermelon approach.

\end{section} 

\begin{section}{Stability for the  local Radon transform}\label{sec:radon}

We recall that the Radon transform of a continuous compactly supported function $f$ is given by
\begin{align*}
     \Rad f (s,\omega) = \int \delta\big(\langle x,\omega \rangle -s\big) f(x) \, \dd x, \quad s \in \R, \, \omega \in \Sph^{n-1}. 
\end{align*}
  It is always possible to define, by duality,  the Radon transform on compactly supported  distributions.   For the time being, we content ourselves with  continuous functions with some kind of decay, but later on we  will extend its definition  to  a wider class of  functions. This transform is even.
It is sometimes convenient to think of the Radon transform as a function on ${\mathbf  \Pi}^{n-1}$, 
the Grassmannian set of hyperplanes in  $\R^n$. Let  $H$ belong to $ {\bold  \Pi}^{n-1}$, then
\begin{align*}
     \Rad f (H) = \int_{H}  f(x) \, \dd \mu_{H}(x)
\end{align*}
where $\dd\mu_{H}$ is the Lebesgue measure on $H$. To relate both notations in a coherent way, we set
    $$ H_0(s,\omega) = \big\{x \in \R^n :  \langle x,\omega \rangle = s \big\}. $$
Note that $\omega$ is a unit normal to the hyperplane $H_0(s,\omega)$ and $s$ a signed distance to the origin.
 For later convenience we might change the origin of the affine reference for the Radon transform  to  the point  $ y_0 \in \R^n $. If one    describes $ H $ as
\begin{equation}\label{hiperp} H = H_{y_0}(s,\omega)=\{ x \in \R^n : \langle x - y_0,\omega \rangle= s \} 
\end{equation}
for some $ \omega \in \Sph^{n - 1} $ and $ s \in \R $. Relating $ \omega, s $ to $ H $ as above, one can define
\begin{equation}\label{translradon} \Rad_{y_0} f (s, \omega) = \int_H f \, \dd \mu_{H}. 
\end{equation}
We will also make use of the following notation
   $$ H^{\pm}_{y_0}(s,\omega)= \big\{x \in \R^n :  \pm (\langle x-y_0,\omega \rangle-s) < 0 \big\} $$
to denote the half-spaces delimited by $H_{y_0}(s,\omega)$.

We refer to Helgason's book \cite{He} for a general study of the Radon transform.
 Of particular importance is the issue of local inversion of the Radon transform:  given a function $f$  with some a priori regularity  
and some decay at infinity, such that $\Rad f (H) =0$  for every $H \in {\bold\Xi}\subset {\bold  \Pi}^{n-1}$, does $f$ vanish on $E=\cup_{H\in {\bold\Xi}}H$? 
For instance, the celebrated Helgason's support theorem reads as follows.
\begin{helgason}
    Let $f$ be a rapidly   decreasing continuous function such that its Radon transform   vanishes on all hyperplanes disjoint 
    from a compact convex set $K$ 
         $$ \Rad f(H) = 0 , \quad H \cap K = \varnothing $$
    then the support of $f$ is contained in $K$.
\end{helgason}

  We are interested in the microlocal approach
(which differs from Helgason's original proof) to prove Helgason's support theorem presented in \cite{Bo,BoQ,Ho2}. 
This approach is somewhat flexible since it does not require the full family of hyperplanes used in Helgason's theorem 
but can be adapted to provide weaker support results when the Radon transform only vanishes in a neighbourhood of 
a fixed hyperplane. A result that follows from this approach is :
\begin{microhelgasonholgrem} Let $f$ be  a compactly supported continuous function such that  its Radon transform vanishes in  a neighborhood of  $H_0(\langle x_0,\xi_0\rangle,\xi_0)$. If  $\supp f \subset  H^+_0(\langle x_0,\xi_0\rangle,\xi_0)$ then $x_0 \notin \supp f$.
\end{microhelgasonholgrem}
 From the inverse problems point of view, the above result
was used in \cite{DKSU1} to prove the unique  determination of the electric potential and the magnetic field in a
magnetic Schr\"odinger equation from partial data. It served as a substitute to the original but somewhat more involved
argument of Kenig, Sj\"ostrand and Uhlmann in \cite{KSU} also based on analytic microlocal theory. Similar ideas 
were used in \cite{DKSU2} to investigate a linearization of the Calder\'on problem with partial data.

The main result in this section, Theorem \ref{th:localLOGstability}, is  a quantitative version  of  the microlocal Helgason-Holmgren theorem. We want  to  relax the compact support and  continuity assumptions on $f$,  in order to apply the corresponding results   to the study of the  stability of Calder\'on's inverse problem.
In the next paragraphs we  review  some concepts and results that will be basic in the microlocal approach   and that will clarify the proof of Theorem \ref{th:localLOGstability}.

\subsection{Microlocal Helgason's support  and Kashiwara's Watermelon theorems}

We will use the classical notation $w^2=w_{1}^2+\cdots+w_{n}^2$ to denote the holomorphic continuation of the Euclidean
scalar product --- particularly to avoid confusion with the norm $|w|^2=|w_{1}|^2+\cdots+|w_{n}|^2$ of complex vectors. The Segal-Bargmann transform 
of an $L^{\infty}$ function is given by
\begin{align*}
     \SB f(z) = \int \e^{-\frac{1}{2h}(z-y)^2} f(y) \, \dd y, \quad z \in \C^n.
\end{align*}
Note that it has the following exponential growth
\begin{align}
    |\SB f(z)| \leq (2\pi h)^{\frac{n}{2}} \e^{\frac{1}{2h}(\im z)^2}  \|f\|_{L^{\infty}}.
\end{align}
By duality, it is easy to extend this transform to tempered distributions. 
This transform has a wide range of applications in Analysis; amongst others, it provides a way of describing analytic singularities
of a distribution%
\footnote{In fact, this analysis can be extended to hyperfunctions.} on an open set $\Omega \subset \R^n$.
\begin{deft}
     A distribution $f \in \mathcal{D}'(\Omega)$ is said to be microlocally exponentially small at $(x_{0},\xi_{0}) \in T^*\Omega$ if there 
     exist a cutoff function $\chi \in \D(\Omega)$ such that $\chi(x_{0}) \neq 0$, two constants $c,C>0$ and a neighbourhood $V_{z_{0}}$
     of $z_{0}=x_{0}-i\xi_{0}$ in $\C^n$ such that the following improved bound holds on the Segal-Bargmann transform
     \begin{align}
         |\SB(\chi f)(z)| \leq C \e^{-\frac{c}{h}+\frac{1}{2h}(\im z)^2}.
     \end{align}
     for all $z \in V_{z_{0}}$ and all $h \in (0,1]$.

     The analytic microsupport of a distribution $f$ --- which we denote by $\musupp f$ --- is the complement of the set of covectors 
     $(x_{0},\xi_{0}) \in T^*\Omega$ at which $f$ is microlocally exponentially small. The analytic wave front set $\WF f$ of $f$
     is the complement  in $T^*\Omega \setminus 0$  of the set of covectors at which $f$ is microlocally exponentially small.
\end{deft}
The analytic microsupport is a closed conic%
\footnote{That is, in the frequency variable: $(x,\xi) \in \musupp f \Rightarrow (x,\lambda \xi) \in \musupp f$ for positive~$\lambda$.} 
 set of the cotangent bundle and consists in two parts
     $$ \musupp f = \supp f \times \{0\} \cup \WF f. $$ 
The projection with respect to the space variable of the analytic wave front set is the analytic singular support
     $$ \pi(\WF f) = \supp\nolimits_{\rm A} f, \quad \pi : T^*\Omega \to \Omega $$
i.e. the set of points $x_{0} \in \R^n$ which have no neighbourhood on which $f$ is real analytic.
A microlocal form of Helgason's support theorem reads as follows.
\begin{cohelgason}
    If the Radon transform  $\Rad f(s,\omega)$ of $f\in {\mathcal C}^0(\R^n)$    vanishes in the neighbourhood of $(s_{0},\omega_{0}) \in \R \times \Sph^{n-1}$
    then $(x_{0},\omega_{0}) \notin \WF f$ where $x_{0} \in H_0(s_{0},\omega_{0})$.
\end{cohelgason}
A more invariant formulation would be that the conormal $N^*(H_0(s_{0},\omega_{0}))$ to the hyperplane is contained in the complement of the analytic wave front set.
As we will see it implies a local weaker (but quite flexible) form of Helgason's support theorem.

In the situation where a distribution is supported on one side of a hyperplane, Kashiwara's Watermelon theorem describes 
some of the covectors of the analytic microsupport (\cite{K}, \cite[Theorem 8.3.3]{Sj}, \cite[Theorem 9.6.6]{Ho1}, \cite{Sj2}).
\begin{kashiwara}
    Let $f \in \mathcal{D}'(\R^n)$ be a distribution supported on one side $H^+$ of a hyperplane $H$. Let $\nu_{0}$ denote a unit normal to $H$.
    If $(x_{0},\xi_{0}) \in \musupp f$ then so does $(x_{0},\xi_{0}+t\nu_{0})$ for all $t \in \R$.
\end{kashiwara}
Kashiwara's Watermelon theorem is generally stated in terms of the analytic wave front set, we chose to use a formulation involving the
analytic microsupport (for similar formulations, see also \cite{M}) since it encompasses information about the support of the function.  
In particular, it immediatly implies the following unique continuation property
     $$ \supp f \subset \big\{\langle x-x_{0},\nu_{0} \rangle <0\big\} \text{ and }Ê(x_{0},\nu_{0}) \notin \WF(f) \Rightarrow x_{0} \notin \supp  f. $$
This is sometimes known as Holmgren's microlocal uniqueness theorem\footnote{In the usual formulation using the analytic wave front set, this is obtained by a limiting argument from the Watermelon theorem.
If $x_{0} \in \supp f$ then $f$ cannot be analytic at $x_{0}$ hence there exists $(x_{0},\xi_{0}) \in \WF f$ therefore $(x_{0},\xi_{0}+t\nu_{0})
\in \WF f$. By the conicity of the wave front set, we have $(x_{0},\xi_{0}/t+\nu_{0}) \in \WF f$ and by the closedness, we have $(x_{0},\nu_{0}) \in \WF f$.}
(or the co-Holmgren theorem).

\subsection{Relating the Radon and the Segal-Bargman transforms}

In this paragraph, we want to connect the Radon and the Segal-Bargman transforms. We start from the identity
\begin{align*}
     \widehat{f}(\sigma \omega) = \widehat{\Rad f}(\sigma,\omega) 
\end{align*}
where $\widehat{\Rad f}(\sigma,\omega) $ denotes the (one-dimensional) Fourier transform of $\Rad f(s,\omega)$ with respect to $s$
    $$ \widehat{\Rad f}(\sigma,\omega)  = \int_{-\infty}^{\infty} \e^{-i s \sigma} \Rad f(s,\omega) \, \dd s $$ 
and use Plancherel's identity to compute the scalar product
\begin{align*}
     \int f \, \overline{g} \, \dd x 
     =  (2 \pi)^{-n} \int_0^{\infty} \int_{\Sph^{n-1}} \widehat{\Rad f}(\sigma,\omega) \, \overline{\widehat{\Rad g}(\sigma,\omega)} \, \sigma^{n-1}
     \, \dd \sigma \, \dd \omega.
\end{align*}
Using the fact that the Radon transform is even, and once again Plancherel's identity, we get 
\begin{align}
\label{Radon:IdRad}
     \int f \, \overline{g} \, \dd x 
     =  \frac{1}{2} (2 \pi)^{-n+1} \int_{-\infty}^{\infty} \int_{\Sph^{n-1}} \Rad f(s,\omega) \, \overline{|D|^{n-1}\Rad g(\cdot,\omega)(s)}
      \, \dd s\, \dd \omega.
\end{align}
We choose $g$ to be the conjugate of the Gaussian kernel of the Segal-Bargman transform: we begin by computing its Radon transform
\begin{align*}
     \Rad\big(\e^{-\frac{1}{2h}(\bar{z}-x)^2}\big)(s,\omega) &= \int \delta\big(\langle x,\omega \rangle -s\big) 
     \e^{-\frac{1}{2h}(\bar{z}-x)^2} \, \dd x \\
     &= (2\pi h)^{\frac{n-1}{2}} \e^{-\frac{1}{2h}(s-\langle \omega,\bar{z}\rangle)^2} 
\end{align*}
and plug $g=\e^{-\frac{1}{2h}(\bar{z}-x)^2}$ in the identity \eqref{Radon:IdRad} to compute the Segal-Bargman transform 
of a function in terms of the Radon transform
\begin{align}
\label{Radon:FBIRad}
     \SB f(z) =  \frac{1}{2} (2 \pi)^{-\frac{n-1}{2}} h^{\frac{n-1}{2}} 
     \int_{-\infty}^{\infty} \int_{\Sph^{n-1}}  \, G_n\big(s,\langle \omega,z \rangle \big) \Rad f(s,\omega)
      \, \dd s\, \dd \omega
\end{align}
where the kernel $G_n$ is given by
   $$ G_n(s,w) = |D|^{n-1}\big(\e^{-\frac{1}{2h}(\cdot-w)^2}\big)(s), \quad s\in \R, \quad w \in \C. $$
   
We will use the following estimates of the kernel:
 \begin{lem}
\label{Radon:EstGnLem} 
     The kernel $G_n$ satisfies the following bound
     \begin{align}
          |G_n(s,w)| \leq B_n h^{-\frac{n-1}{2}} \big(1+h^{-\frac{1}{2}}|s-w|\big)^{n} \big(1+\e^{\frac{1}{2h}((\im w)^2-(s-\re w)^2)}\big).
     \end{align}
\end{lem}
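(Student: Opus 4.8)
\emph{Proof strategy.} The plan is to first rewrite $G_n$ as an explicit one–dimensional oscillatory integral. Since $|D|^{n-1}$ is the Fourier multiplier with symbol $|\sigma|^{n-1}$ and the one–dimensional Fourier transform in $t$ of $\e^{-\frac{1}{2h}(t-w)^2}$ equals $\sqrt{2\pi h}\,\e^{-iw\sigma-\frac h2\sigma^2}$ — this is the standard Gaussian integral together with a contour shift, legitimate thanks to Gaussian decay — one obtains, after the rescaling $\sigma=h^{-1/2}\eta$,
\[
     G_n(s,w)=\frac{h^{-\frac{n-1}{2}}}{\sqrt{2\pi}}\int_{\R}|\eta|^{n-1}\,\e^{i\zeta\eta-\frac{\eta^2}{2}}\,\dd\eta ,
     \qquad \zeta=h^{-1/2}(s-w)\in\C .
\]
Writing $\zeta=\alpha+i\beta$, one has $|\zeta|=h^{-1/2}|s-w|$ and $\beta^2-\alpha^2=h^{-1}\big((\im w)^2-(s-\re w)^2\big)$, so the lemma is equivalent to the $h$-free estimate
\[
     \Big|\int_{\R}|\eta|^{n-1}\,\e^{i\zeta\eta-\frac{\eta^2}{2}}\,\dd\eta\Big|
     \le C_n\,(1+|\zeta|)^n\big(1+\e^{\frac12(\beta^2-\alpha^2)}\big),\qquad\zeta=\alpha+i\beta\in\C .
\]

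\emph{Reduction and contour deformation.} I would split the integral over $(0,\infty)$ and $(-\infty,0)$. On $(0,\infty)$ one has $|\eta|^{n-1}=\eta^{n-1}$, which removes the non-smoothness of $|\eta|^{n-1}$ at the origin (present when $n$ is even); moreover the contribution of $(-\infty,0)$ reduces, by $\eta\mapsto-\eta$, to that of $(0,\infty)$ with $\zeta$ replaced by $-\zeta$, which changes neither $|\zeta|$ nor $\beta^2-\alpha^2$, so it suffices to treat $\int_0^{\infty}\eta^{n-1}\e^{i\zeta\eta-\eta^2/2}\,\dd\eta$. Completing the square gives
\[
     \int_0^{\infty}\eta^{n-1}\,\e^{i\zeta\eta-\frac{\eta^2}{2}}\,\dd\eta
     =\e^{-\zeta^2/2}\int_0^{\infty}\eta^{n-1}\,\e^{-\frac12(\eta-i\zeta)^2}\,\dd\eta ,
\]
and $|\e^{-\zeta^2/2}|=\e^{\frac12(\beta^2-\alpha^2)}$ is exactly the exponential appearing in the claimed bound. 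The integrand $\eta\mapsto\eta^{n-1}\e^{-\frac12(\eta-i\zeta)^2}$ is entire and, on any horizontal line, decays like $\e^{-(\re\eta)^2/2}$ as $\re\eta\to+\infty$, so Cauchy's theorem lets me deform the contour $[0,\infty)$ into the segment $[0,i\zeta]$ followed by the horizontal ray $i\zeta+[0,\infty)$.

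\emph{Estimating the two pieces.} On the horizontal ray, parametrised by $\eta=i\zeta+\xi$ with $\xi\ge0$, the exponent becomes $-\xi^2/2$ and $|\eta|\le|\zeta|+\xi$, so that piece contributes at most $\e^{\frac12(\beta^2-\alpha^2)}\int_0^{\infty}(|\zeta|+\xi)^{n-1}\e^{-\xi^2/2}\,\dd\xi\lesssim(1+|\zeta|)^{n-1}\e^{\frac12(\beta^2-\alpha^2)}$. On the segment, parametrised by $\eta=ti\zeta$, $t\in[0,1]$, one has $(\eta-i\zeta)^2=-\zeta^2(t-1)^2$, so, after incorporating the prefactor $\e^{-\zeta^2/2}$, the modulus of that piece is at most $|\zeta|^n\int_0^1 t^{n-1}\e^{\frac12(\beta^2-\alpha^2)(1-(t-1)^2)}\,\dd t$; since $1-(t-1)^2=t(2-t)\in[0,1]$ for $t\in[0,1]$, the exponent stays between $0$ and $\frac12(\beta^2-\alpha^2)$, whence that integral is $\le\frac1n\big(1+\e^{\frac12(\beta^2-\alpha^2)}\big)$. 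Summing the two pieces and the two half-lines and undoing the rescaling yields the stated estimate with an $h$-independent constant $B_n$. The only delicate point is the justification of the contour deformation: on the closing vertical arcs $\re\eta=R\to\infty$ one must check that $\re\big((\eta-i\zeta)^2\big)=(R+\beta)^2-(\im\eta-\alpha)^2$ tends to $+\infty$ and dominates the polynomial growth $|\eta|^{n-1}\sim R^{n-1}$, so that these arcs contribute nothing in the limit; everything else is elementary. (When $n$ is odd one could instead avoid the deformation altogether, writing the model integral as $\sqrt{2\pi}\,i^{\,n-1}H_{n-1}(\zeta)\,\e^{-\zeta^2/2}$ with $H_{n-1}$ a Hermite polynomial, which gives the bound at once; this also serves as a useful check.)
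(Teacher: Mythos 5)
Your argument is correct. It is, at its core, the same mechanism as the paper's treatment of the even-dimensional case, but organized so that no case distinction is needed: after writing $G_n$ as the Gaussian-damped integral $\frac{h^{-(n-1)/2}}{\sqrt{2\pi}}\int|\eta|^{n-1}\e^{i\zeta\eta-\eta^2/2}\,\dd\eta$ with $\zeta=h^{-1/2}(s-w)$ (which coincides with the formula the paper writes for even $n$), you split $|\eta|^{n-1}$ at the origin, complete the square and deform $[0,\infty)$ to $[0,i\zeta]\cup(i\zeta+[0,\infty))$; the segment produces the polynomial factor $(1+|\zeta|)^n$ times $\bigl(1+\e^{\frac12(\beta^2-\alpha^2)}\bigr)$ and the shifted ray the factor $(1+|\zeta|)^{n-1}\e^{\frac12(\beta^2-\alpha^2)}$, exactly the two regimes in the stated bound. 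The paper instead argues by parity: for odd $n$ it computes $G_n$ explicitly via a Hermite polynomial (obtaining a slightly sharper bound, with exponent $n-1$ and no additive $1$), and for even $n$ it first uses the symmetries $G_n(s,w)=G_n(s-\re w,i\im w)$, $\overline{G_n(s,w)}=G_n(s,\bar w)=G_n(-s,-\bar w)$ and scaling to reduce to $w=-i\tau$, $s\ge0$, $h=1$, splits at $\sigma=\tau$, and performs a contour integration along $(-\infty,\tau]\cup[\tau,\tau+is]\cup[\tau+is,-\infty+is)$ whose vertical and horizontal pieces play the same roles as your segment and ray. What your route buys is uniformity in the dimension and the elimination of the normalization steps; what you give up is only the marginally better odd-dimensional bound, which is not needed for the lemma. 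Your treatment of the delicate points — the contour shift justifying the complex-argument Gaussian Fourier transform, and the vanishing of the closing vertical arcs since $\re\bigl((\eta-i\zeta)^2\bigr)=(R+\beta)^2-(\im\eta-\alpha)^2\to+\infty$ dominates the polynomial growth — is sound, and the observation that the exponent on the segment interpolates between $0$ and $\tfrac12(\beta^2-\alpha^2)$ (in whichever order, depending on the sign) correctly yields the factor $1+\e^{\frac12(\beta^2-\alpha^2)}$.
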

\begin{proof}
We need to distinguish two cases according to the parity of the dimension~$n$.

Let us start with the case $n$ odd which is simpler. The kernel $G_n$ can explicitly be computed 
\begin{align*}
      G_n(s,w) = D_s^{n-1}\big(\e^{-\frac{1}{2h}(s-w)^2}\big) = h^{-\frac{n-1}{2}}
     \e^{-\frac{1}{2h}(s-w)^2} Q_{n}\bigg(\frac{s-w}{\sqrt{h}}\bigg)
\end{align*} 
where 
\begin{align*}
     Q_{n}(w) =  \e^{\frac{w^2}{2}}D_w^{n-1} \e^{-\frac{w^2}{2}}
\end{align*}
is a Hermite polynomial of degree $n-1$, hence satisfies the bound
\begin{align*}
     |Q_{n}(w)| \leq A_{n} (1+|w|)^{n-1}.
\end{align*}
The former estimate together with 
 \begin{align*}
           \big|\e^{-\frac{1}{2h}(s-w)^2}\big| &= 
           \e^{-\frac{1}{2h}(s-\re w)^2+\frac{1}{2h}(\im w)^2}
\end{align*}
imply the following bound on $G_n$
\begin{align}
     |G_n(s,w)| \leq A_n h^{-\frac{n-1}{2}} \big(1+h^{-\frac{1}{2}}|s-w|\big)^{n-1} \e^{\frac{1}{2h}((\im w)^2-(s-\re w)^2)}
\end{align}
when $n$ is odd. 

Notice that identity  \eqref{Radon:FBIRad} reads in odd dimensions 
\begin{multline*}
     \SB f(z) = \\  \frac{1}{2} (2 \pi)^{-\frac{n-1}{2}} 
     \int_{-\infty}^{\infty} \int_{S^{n-1}} \e^{-\frac{1}{2h}(s-\langle \omega,z\rangle)^2}
     Q_{n}\bigg(\frac{s-\langle \omega,z\rangle}{\sqrt{h}}\bigg)\Rad f(s,\omega) \, \dd \omega \, \dd s.
\end{multline*}

 The even dimensional case is a bit more involved: the kernel $G_n$ satisfies the following relations
\begin{align}
\label{Radon:KernelRel}
     G_n(s,w) &= G_n(s-\re w,i \im w), \\ \nonumber \overline{G_n(s,w)} &= G_n(s,\bar{w}) = G_n(-s,-\bar{w})
\end{align}
and has the following expression
\begin{align*}
     G_n(s,w) &= |D |^{n-1}\big(\e^{-\frac{1}{2h}(\cdot-w)^2}\big)(s) \\
     &= h^{-\frac{n-1}{2}} \frac{\e^{-\frac{w^2}{2h}}}{\sqrt{2\pi}}  \int_{-\infty}^{\infty}
     |\sigma|^{n-1} \e^{-\frac{(\sigma+iw/\sqrt{h})^2}{2}+\frac{i}{\sqrt{h}} s \sigma} \, \dd \sigma.
\end{align*}

     By the relations \eqref{Radon:KernelRel}, it suffices to prove the estimate when $w=-i\tau$ is imaginary
     and $s$ is non-negative, and by scaling, we might as well assume that $h=1$. For $\tau \in \R, s \geq 0$ and $n$ even, we
     may decompose the kernel as 
     \begin{align*}
          G_n(s,-i\tau) &=  \frac{\e^{\frac{\tau^2}{2}-is\tau}}{\sqrt{2\pi}} \bigg(\int_{-\infty}^{\tau} + \int_{\tau}^{\infty} 
          |\sigma -\tau|^{n-1} \e^{-\frac{\sigma^2}{2}+i s \sigma} \, \dd \sigma \bigg) \\
          &= I_n(s,\tau) + \bar{I}_n(s,-\tau)
     \end{align*} 
     where the integral
     \begin{align*}
          I_n(s,\tau) = \frac{\e^{\frac{\tau^2}{2}-is\tau}}{\sqrt{2\pi}}  \int_{-\infty}^{\tau} (\tau-\sigma)^{n-1} \e^{-\frac{\sigma^2}{2}+i s \sigma} 
          \, \dd \sigma   
     \end{align*}
     can be computed by an integration on the contour $(-\infty,\tau] \cup [\tau,\tau+is] \cup [\tau+is,-\infty+is)$
     \begin{multline*}
            I_n(s,\tau) = \frac{\e^{\frac{\tau^2}{2}}}{\sqrt{2\pi}}\bigg(
           \int_{-\infty}^{\tau} (\tau-\sigma-is)^{n-1} \e^{-\frac{(\sigma+is)^2}{2}+i s (\sigma+is)} \, \dd \sigma  \\
           + (-i)^{n}\int_0^s \sigma^{n-1} \e^{-\frac{1}{2}(\tau+i\sigma)^2} \e^{is(\tau+i\sigma)} \,\dd \sigma \bigg).
     \end{multline*}
    The first term is bounded by a constant times
         $$ \e^{\frac{\tau^2}{2}-\frac{s^2}{2}} (1+|s|+|\tau|)^{n-1} $$
    while the second term% 
    \footnote{In the odd dimensional case, this term disappears when one computes $I_n(s,\tau) + \bar{I}_n(s,-\tau)$.}  
    is bounded by a constant times
        $$  (1+|s|)^{n}. $$
     This completes the proof of the lemma.
\end{proof}

We restate (\ref{Radon:FBIRad}) in term of the Radon transform centered  at $y_0$, see \eqref{translradon},  as
\begin{multline}
\label{Radon:FBIRady}
     \SB f(\zeta) =  \\ \frac{1}{2} (2 \pi)^{-\frac{n-1}{2}} h^{\frac{n-1}{2}} 
        \int_{-\infty}^{\infty} \int_{\Sph^{n-1}}  \, G_n\big(s,\langle \omega,\zeta-y_0 \rangle \big) \Rad_{y_0} f(s,\omega)
      \, \dd s\, \dd \omega.
\end{multline}
Given $ \omega_0 \in \Sph^{n - 1}$  and $ \beta \in (0, 1] $ we consider the following cap centered around $\omega_0$ 
on the hypersphere $\Sph^{n-1}$
\begin{align}
 \label {gamma}
      \begin{aligned}
      \Gamma &= \{ \omega \in \Sph^{n - 1} : \langle\omega , \omega_0\rangle^2 > 1 - \beta^2 \} \\
      &=  \{ \omega \in \Sph^{n - 1} : d_{\Sph^{n-1}}(\omega_0, \omega)< \arcsin \beta\}
      \end{aligned}
 \end{align} 
$ d_{\Sph^{n-1}}$ being the geodesic distance on $\mathbf S^{n-1}$.

   Before proceeding to further computations, we also note that $\dd \mu_{H_0(s,\omega)} \wedge \dd s = \dd x$ 
and therefore
\begin{align*}
     \int_{-\infty}^\infty \big|\Rad f(s,\omega)\big| \, \dd s \leq \|f\|_{L^1}
\end{align*}
which leads to
\begin{align}
\label{Radon:L1Rad}
     \int_{-\infty}^\infty \int_{\Sph^{n-1}} \big|\Rad f(s,\omega)\big|  \, \dd \omega \, \dd s \leq |\Sph^{n-1}| \times \|f\|_{L^1}.
\end{align}

 We   introduce the following set of functions: $ u \in X $ by definition if and only if $ u \in L^1 (\R^n) $ and
\begin{equation}\label{normX}
\|u\|_X= \int_\R (1 + |s|)^n \norm{\Rad_0 u(s, \cdot)}{}{L^1(\Sph^{n - 1})} \, \dd s < \infty.
\end{equation}

Let us remark, see (\ref{Radon:L1Rad}),  that for functions in $L^1 (\R^n) $, the Radon transform       is defined a.e. as a function  in $ L^1(\R \times \Sph^{n - 1}) $. 
  Our space $X$ is more restrictive,  a sufficient condition for  a function to be in $X$, is given by the estimate
\begin{equation}\label{decay}
 \norm{u}{}{X} \leq |\Sph^{n - 1}| \int_{\R^n} (1 + |x|)^n |u(x)| \, \dd x. 
\end{equation}

 \begin{prop}[Quantitative  Microlocal   Helgason's theorem]\label{prop:SB-Rtranforms}
Let $ f $ belong to $ X $.  There exists a positive constant $ C $, only depending on $ n $, such that 
\begin{multline}\label{mhelgason}
  \e^{-\frac{1}{2h}|\mathrm{Im}\, \zeta|^2}|\SB f (\zeta)| \leq  \frac{C}{h^\frac{n}{2}} (1 + |  \zeta |  +|y_0|)^n  \\
  \times  \bigg( \int_{|s|<\alpha}(1 + |s|)^n \norm{\Rad_{y_0}f(s, \cdot)}{}{L^1(\Gamma)} \, \dd s 
  + \norm{f}{}{X} \Big(\e^{-\frac{\alpha^2}{8h}} + \e^{-\frac{\gamma^2 \beta^2}{32 h}}\Big) \bigg),
\end{multline}
for all $ h \in (0, 1] $, $ \gamma > 0 $ and $ \zeta \in \C^n $ such that $ |\mathrm{Re}\, \zeta - y_0| < \alpha / 2 $, $ |\mathrm{Im}\, \zeta| \geq \gamma $ and   $  \langle\omega_0 ,\frac{\mathrm{Im}\,\zeta}{|\mathrm{Im}\,\zeta|}\rangle^2 > 1 - \beta^2 / 4 $ .
\end{prop}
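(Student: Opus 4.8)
The plan is to start from the integral representation \eqref{Radon:FBIRady} of the Segal-Bargmann transform in terms of the Radon transform centered at $y_0$, and to split the $(s,\omega)$ integral into the region where we control $\Rad_{y_0} f$ and the complementary region where we can only use the crude $L^1$-bound \eqref{Radon:L1Rad}. Concretely, I would write $\SB f(\zeta)$ as a sum of integrals over four regions according to whether $|s| < \alpha$ or $|s| \geq \alpha$ and whether $\omega \in \Gamma$ or $\omega \notin \Gamma$. The first region ($|s| < \alpha$, $\omega \in \Gamma$) produces, via the kernel bound of Lemma~\ref{Radon:EstGnLem}, the first term on the right-hand side of \eqref{mhelgason}; the point here is that since $|\re\zeta - y_0| < \alpha/2$ and $|s| < \alpha$, the quantity $|s - \langle \omega, \re(\zeta - y_0)\rangle|$ stays bounded by a multiple of $(1+|s|)(1+|\zeta|+|y_0|)$, and the factor $\e^{\frac{1}{2h}(\im w)^2}$ from the kernel combines with the prefactor $\e^{-\frac{1}{2h}|\im\zeta|^2}$ on the left after noting that $(\im w)^2 = \langle\omega,\im\zeta\rangle^2 \leq |\im\zeta|^2$.

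The main work is to show that the three remaining regions contribute the two exponentially small terms $\e^{-\alpha^2/(8h)}$ and $\e^{-\gamma^2\beta^2/(32h)}$ times $\|f\|_X$. For the regions with $|s|\geq\alpha$: here the kernel carries the factor $\e^{\frac{1}{2h}((\im w)^2-(s-\re w)^2)}$, and after multiplying by $\e^{-\frac{1}{2h}|\im\zeta|^2}$ the net exponent is at most $-\frac{1}{2h}(s-\re w)^2$. Since $|\re w| = |\langle\omega,\re(\zeta-y_0)\rangle| \le |\re\zeta-y_0| < \alpha/2 \le |s|/2$, we get $(s-\re w)^2 \geq s^2/4 \geq \alpha^2/4 \cdot$ plus a remaining $s^2/8$ that I keep to absorb the polynomial weight $(1+|s|)^n$ and the polynomial factor from the kernel; this yields the bound $\e^{-\alpha^2/(8h)}$ times $\int (1+|s|)^n \|\Rad_{y_0}f(s,\cdot)\|_{L^1(\Sph^{n-1})}\,\dd s$. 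The last quantity is exactly $\|f\|_X$ provided one checks that the Radon transform centered at $y_0$ and at $0$ give comparable $X$-type norms — this is a change-of-variables estimate ($\Rad_{y_0}f(s,\omega) = \Rad_0 f(s + \langle y_0,\omega\rangle,\omega)$) contributing another factor polynomial in $|y_0|$, which is absorbed into the $(1+|\zeta|+|y_0|)^n$ prefactor.

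For the region $|s| < \alpha$, $\omega \notin \Gamma$: here we do not gain from $s$ being large, so we must gain from the fact that $\omega$ is far from $\omega_0$ while $\im\zeta$ is close (in direction) to $\omega_0$. Since $\langle\omega_0,\im\zeta/|\im\zeta|\rangle^2 > 1 - \beta^2/4$ and $\langle\omega,\omega_0\rangle^2 \le 1-\beta^2$, an elementary spherical-geometry computation shows $\langle\omega,\im\zeta\rangle^2 \le |\im\zeta|^2(1 - c\beta^2)$ for a suitable constant — I would aim for the gap $|\im\zeta|^2 - (\im w)^2 \geq \tfrac{\beta^2}{8}|\im\zeta|^2 \ge \tfrac{\beta^2\gamma^2}{8}$ using $|\im\zeta|\ge\gamma$, then split this as $\tfrac{\beta^2\gamma^2}{32}$ plus a reserve to kill the polynomial factors, giving $\e^{-\gamma^2\beta^2/(32h)}\|f\|_X$. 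The constant $32$ versus $8$ is just bookkeeping of how the gap is partitioned between the exponential decay and the absorption of powers of $h^{-1/2}$.

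The step I expect to be the genuine obstacle is the careful simultaneous treatment of the polynomial prefactors: the kernel bound of Lemma~\ref{Radon:EstGnLem} produces $(1 + h^{-1/2}|s-w|)^n$, and powers of $h^{-1/2}|s-w|$ are controlled only by sacrificing a fraction of the Gaussian, i.e. by the elementary inequality $t^n\e^{-t^2/2}\le C_n$; doing this while keeping the clean constants $\alpha^2/8$ and $\gamma^2\beta^2/32$ — and while not losing more than a polynomial factor in $|\zeta|+|y_0|$ — requires choosing the split of each quadratic exponent judiciously. The rest is routine: bounding $|\langle\omega,\zeta-y_0\rangle|$ and $|\langle\omega,\im\zeta\rangle|$ by $|\zeta|+|y_0|$ and $|\im\zeta|$ respectively, and collecting the $h$-powers so that the overall prefactor is $C h^{-n/2}$ (the $h^{(n-1)/2}$ in \eqref{Radon:FBIRady} against the $h^{-(n-1)/2}$ in the kernel bound leaves $h^{-n/2}$ after one more $h^{-1/2}$ is spent, e.g., bounding $\int_{|s|<\alpha}\dd s$ or a Gaussian in $s$).
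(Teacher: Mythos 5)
Your plan is essentially the paper's own proof: the same starting identity \eqref{Radon:FBIRady} combined with the kernel bound of Lemma \ref{Radon:EstGnLem}, the same splitting of the $(s,\omega)$-integral into $|s|\geq\alpha$, $(-\alpha,\alpha)\times(\Sph^{n-1}\setminus\Gamma)$ and the main region $(-\alpha,\alpha)\times\Gamma$, and the same use of the hypotheses $|\re\zeta-y_0|<\alpha/2$, $|\im\zeta|\geq\gamma$ and the angular condition on $\im\zeta$ (including the recentring $\Rad_{y_0}f(s,\omega)=\Rad_0 f(s+\langle y_0,\omega\rangle,\omega)$, which the paper leaves implicit). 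Two bookkeeping remarks only: the additive $1$ in the kernel bound is disposed of via $1\leq \e^{-\frac{\gamma^2}{2h}}\e^{\frac{1}{2h}|\im\zeta|^2}$ (as in the paper), and in the region $|s|\geq\alpha$ no Gaussian ``reserve'' is needed, since the weight $(1+|s|)^n$ in the $X$-norm already absorbs the polynomial factor; using $(|s|-\alpha/2)^2\geq\alpha^2/4$ in full is what gives exactly $\e^{-\frac{\alpha^2}{8h}}$, whereas your split of $s^2/4$ would only yield $\e^{-\frac{\alpha^2}{16h}}$.
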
 
\begin{rem}
    Note that when $\Rad f$ vanishes on a neighborhood of $(s_0,\omega_0)$ the above estimate  implies that $(y _{0},\pm \omega_{0}) \notin \WF(f)$ 
    when $\langle y_{0},\omega_{0}\rangle=s_{0}$. This is the microlocal version of Helgason's support theorem as stated in
    the introduction of this section. Proposition \ref{prop:SB-Rtranforms} is therefore a quantitative version of this microlocal result.
\end{rem}
\begin{proof} 
It follows from Lemma \ref{Radon:EstGnLem} and \eqref{Radon:FBIRady} that
\begin{multline*}
|\SB f(\zeta)| \leq C \int_{-\infty}^{\infty}\int_{\Sph^{n-1}}  \big(1+h^{-\frac{1}{2}}|s-\langle \omega,\zeta-y_0 \rangle|\big)^{n} 
\\ \times \big(1+\e^{\frac{1}{2h}(\langle\omega,\im \zeta\rangle ^2-(s-\langle\omega, \re \zeta-y_0\rangle)^2)}\big) \,
|\Rad_{y_0} f(s, \omega)|\,\dd \omega\,\dd s.
\end{multline*}
Let us split  the integral  into 
 \[ \int_{\R \times \Sph^{n - 1}} = \int_{\R \setminus (-\alpha, \alpha) \times \Sph^{n - 1}}+\int_{(-\alpha, \alpha) \times (\Sph^{n - 1} \setminus \Gamma)} +\int_{(-\alpha, \alpha) \times \Gamma}= I_1+ I_2+ I_3. \] 
To estimate $I_1$, notice that 
if $ s \in \R \setminus (-\alpha, \alpha) $ and $ \omega \in \Sph^{n - 1} $, then
\begin{align*}
\e^{-\frac{1}{2h}|\langle\omega , \mathrm{Re}\, \zeta - y_0\rangle - s|^2} \leq \e^{-\frac{1}{2h}(|s| - |\langle\omega  , 
\mathrm{Re}\, \zeta - y_0\rangle|)^2} \leq \e^{-\frac{1}{2h}\frac{\alpha^2}{4}},   
\end{align*} and that 
\[1 \leq \e^{-\frac{1}{2h}\gamma^2}\e^{\frac{1}{2h}|\mathrm{Im}\, \zeta|^2},\]
for all $ \zeta \in \C^n $ such that $ |\mathrm{Re}\, \zeta - y_0| < \alpha / 2 $ and $ |\mathrm{Im}\, \zeta| \geq \gamma $. 
Then it follows  easily that 
\[I_1\leq  \frac{C}{h^\frac{n}{2}}\e^{\frac{1}{2h}|\mathrm{Im}\, \zeta|^2}
  (1 + |  \zeta|  )^n \\   
  \norm{f}{}{X} \big(\e^{-\frac{1}{2h}\frac{\alpha^2}{4}} + \e^{-\frac{1}{2h} \frac{\gamma^2}{16}}\big) .
\]
The integral $I_2$ can be bounded in a similar way. Notice  that, for $ s \in (-\alpha, \alpha) $ and $ \omega \in \Sph^{n - 1} \setminus \Gamma $, it holds with $ \theta = |\mathrm{Im}\, \zeta|^{-1} \mathrm{Im}\, \zeta $
\begin{align*}
\e^{\frac{1}{2h}\langle \omega ,\mathrm{Im}\, \zeta\rangle^2} &\leq \e^{\frac{1}{2h}|\mathrm{Im}\, \zeta|^2} \e^{-\frac{1}{2h}(|\mathrm{Im}\, \zeta|^2 - \langle \omega , \mathrm{Im}\, \zeta\rangle^2)} \leq \e^{\frac{1}{2h}|\mathrm{Im}\, \zeta|^2} \e^{-\frac{1}{2h} \gamma^2 (1 - \langle\omega ,\theta\rangle^2)},
 \end{align*} 
 and  again
 \[1 \leq \e^{-\frac{1}{2h}\gamma^2}\e^{\frac{1}{2h}|\mathrm{Im}\, \zeta|^2},\]
for all $ \zeta \in \C^n $ such that $ |\mathrm{Im}\, \zeta| \geq \gamma $ and $ \langle\omega_0 , \theta\rangle^2 > 1 - \beta^2 / 4 $.
This completes the proof of the Proposition.
 \end{proof}

\subsection{A quantitative Helgason-Holmgren theorem}
Now  we  state and prove the main  result in this  section, the quantitative version of microlocal Helgason-Holmgren theorem:
\begin{thm}
\label{th:localLOGstability} Let $ M\geq 1 $ be constant. Given $ y_0 \in \R^n $, $ \omega_0 \in \Sph^{n - 1} $, $ \alpha > 0 $ and $ \beta \in (0, 1] $, consider  
$(-\alpha,\alpha)\times \Gamma \subset \R\times \Sph^{n-1}$ introduced in \eqref{gamma} above, and 
define  the \emph{dependence domain} of the Radon transform data
  \[
E = \big\{ x \in \R^n : \langle \omega , x - y_0\rangle = s,\, s \in (-\alpha, \alpha),\, \omega \in \Gamma \big\}.
\]
Assume that  for some $p$, $ 1 \leq p < \infty $,  and $\lambda$,  $ 0 < \lambda < 1/p $,   a function $ q $ satisfies  the following conditions:
\begin{itemize}
\item[(a)] $ \mathbf{1}_E q  \in X \cap L^\infty(\R^n) $  , where $ \mathbf{1}_E $ stands for the characteristic function of the set $ E $, furthermore $$ \norm{q}{}{L^\infty(E)} + \norm{\mathbf{1}_E q}{}{X} < M .$$
\item[(b)] $ y_0 \in \mathrm{supp}\, q $ and $ \mathrm{supp}\, q \subset \{ x \in \R^n : \langle x - y_0,  \omega_0\rangle \leq 0 \} $.
\item[(c)] $(\lambda,p,p)$-Besov regularity on the dependence domain
\[ \int_{\R^n} \frac{\norm{\mathbf{1}_E q - (\mathbf{1}_E q)(\cdot - y)}{p}{L^p(\R^n)}}{|y|^{n + \lambda p}} \, \dd y < M^p. \]
\end{itemize}
Then there exists a positive constant $ C =C(M,|G|,\alpha, \beta, \lambda)$,  such that 
\begin{equation} \label{es:LOGstabiltyRADON}
\norm{q}{}{L^p(G)} \leq C\bigg|\log \displaystyle\int_{(-\alpha, \alpha)} (1 + |s|)^n \norm{\Rad_{y_0}q(s, \cdot)}{}{L^1(\Gamma)} \, \dd s \bigg| 
^{-\frac{\lambda}{2}},
\end{equation}
where 
\begin{equation}\label{abierto}G = \left\{ x \in \R^n : |x - y_0| < \frac{\alpha}{8 \cosh  ( 8 \pi / \beta )} \right\}.
\end{equation}
\end{thm}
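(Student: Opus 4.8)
The plan is to start from the quantitative microlocal Helgason theorem (Proposition \ref{prop:SB-Rtranforms}) applied to $f = \mathbf{1}_E q$, and turn the exponential decay of the Segal--Bargman transform $\SB f$ into analyticity-type control of $f$ near $y_0$, which — via the support hypothesis (b) and a Watermelon-type propagation argument — will force $q$ to be small near $y_0$. Concretely, first I would fix the data quantity $\eta = \int_{(-\alpha,\alpha)}(1+|s|)^n\|\Rad_{y_0}q(s,\cdot)\|_{L^1(\Gamma)}\,\dd s$ and, assuming $\eta$ small, choose the free parameters $h$, $\gamma$, $\beta$ in \eqref{mhelgason} so as to balance the three error terms $\eta$, $e^{-\alpha^2/(8h)}$, $e^{-\gamma^2\beta^2/(32h)}$. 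The natural choice is $h \sim 1/|\log\eta|$ (so $e^{-\alpha^2/(8h)}$ is a power of $\eta$) and then $\gamma^2 \sim 1$, giving an estimate of the form
\[
e^{-\frac{1}{2h}|\im\zeta|^2}|\SB f(\zeta)| \leq \frac{C}{h^{n/2}}(1+|\zeta|+|y_0|)^n\,\eta^{c}
\]
valid for $\zeta$ in a complex neighborhood $V$ of $y_0 - i\omega_0$ (more precisely for $|\re\zeta - y_0| < \alpha/2$, $|\im\zeta|\geq\gamma$, and $\im\zeta/|\im\zeta|$ in a cap around $\omega_0$). This says precisely that $f$ is ``microlocally $\eta^c$-small'' at $(y_0, t\omega_0)$ for a range of directions, i.e. a quantitative form of $(y_0,\omega_0)\notin\WF_{\rm A}(f)$.

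Next, the support condition (b), $\supp q \subset \{\langle x - y_0,\omega_0\rangle \leq 0\}$, lets me invoke the mechanism behind Kashiwara's Watermelon theorem: since $f$ is supported on one side of the hyperplane $H_0$ through $y_0$ with normal $\omega_0$, microlocal smallness at $(y_0,\omega_0)$ propagates along the line $(y_0, \omega_0 + t\nu_0) = (y_0, (1+t)\omega_0)$ and, crucially, to the zero covector $(y_0, 0)$ — which is quantitative vanishing of $f$ near $y_0$. I expect this to be implemented by a concrete complex-analytic argument rather than by citing the theorem verbatim: one writes $\SB f$ near $y_0$, uses the one-sided support to deform the Gaussian integration contour in the $\omega_0$-direction (the support being on one side means the deformed integral converges), and thereby extends the exponential bound on $\SB f$ from the ``$|\im\zeta|\geq\gamma$'' regime down to $\im\zeta$ small — in particular to $\im\zeta = 0$ along $z = x$ real near $y_0$. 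The radius of the real neighborhood one gets is governed by how far the contour can be pushed, which is where the factor $\cosh(8\pi/\beta)$ in \eqref{abierto} enters: the width of the angular cap $\beta$ limits the admissible complex deformation, and the hyperbolic cosine is the typical loss from iterating/deforming a Gaussian kernel over such a cap. This yields $\|\SB f(x)\| \lesssim \eta^{c'}$ (times polynomial and $h$-power factors) for $x$ real in the ball $G$ of \eqref{abierto}.

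The final step converts smallness of $\SB f$ back to smallness of $f$ in $L^p(G)$, and this is where the Besov hypothesis (c) does its work: the Segal--Bargman / FBI transform with parameter $h$ is, up to normalization, a Gaussian mollification at scale $\sqrt h$, so $\SB f(x) \approx (2\pi h)^{n/2} (f * \rho_{\sqrt h})(x)$ and one has $\|f - c_n h^{-n/2}\SB f\|_{L^p(G)} \lesssim h^{\lambda/2}\,[f]_{W^{\lambda,p}} \lesssim M\,h^{\lambda/2}$ by a standard estimate for approximation by mollifiers in terms of the $(\lambda,p,p)$-Besov seminorm in (c). Combining, $\|q\|_{L^p(G)} = \|f\|_{L^p(G)} \lesssim h^{\lambda/2} + h^{-n/2}\eta^{c'} \lesssim |\log\eta|^{-\lambda/2} + |\log\eta|^{n/2}\eta^{c'}$, and since $\eta$ is small the second term is negligible against the first, giving \eqref{es:LOGstabiltyRADON}. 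The main obstacle is the middle step: making the Watermelon propagation quantitative — i.e. deforming the contour in $\SB f$ using only the \emph{one-sided} support and the restricted angular cap, and tracking how the small constant $\eta^c$ and the radius of the good real neighborhood degrade under that deformation, so as to land exactly on the ball $G$ with the stated $\cosh(8\pi/\beta)$ dependence. The choice of optimal $h$ then has to be rechecked so that all the accumulated losses still leave a power of $\eta$, hence a power of $|\log\eta|^{-1}$ after taking logs.
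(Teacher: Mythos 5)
Your overall architecture does match the paper's: Proposition \ref{prop:SB-Rtranforms} applied to $\mathbf 1_E q$, then a quantitative one-sided-support propagation of the improved bound on $\SB(\mathbf 1_E q)$ down to real points near $y_0$, then conversion to $L^p(G)$ smallness via Gaussian mollification and the Besov seminorm (this last step is exactly the paper's Lemma \ref{le:Lp_gaussianCONV}), with $h\sim 1/|\log\mathcal I|$ where $\mathcal I$ is the Radon data term, and the trivial dichotomy when $\mathcal I$ is not small. The first and last steps of your plan are essentially the paper's.

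The genuine gap is the middle step, which you yourself flag as the main obstacle, and the mechanism you sketch for it would not work as described. You cannot ``deform the Gaussian integration contour in the $\omega_0$-direction'': the integration variable in $\SB f$ is real and $q$ is merely $L^\infty$ with $(\lambda,p,p)$-Besov regularity, not analytic, so no contour shift in the $y$-integral is available, and one-sided support plays no role in any such convergence. What the paper does instead is: (i) use the support hypothesis (b) directly, by evaluating the Gaussian kernel on $\supp q$, to obtain the pointwise a priori bound \eqref{condicionsoporte}, which improves on the trivial bound precisely when $\langle\re\zeta-y_0,\omega_0\rangle>0$; (ii) freeze the components of $\zeta$ transverse to $\omega_0$ and view $z=\langle\omega_0,\zeta-y_0\rangle$ as a single complex variable, so that $\Phi(z)=|\re z|^2-|\im z|^2+2h\log|\SB q|+\cdots$ is subharmonic, bounded by $(\re z_-)^2$ everywhere (from (i)) and by $-\alpha^2/8$ on $|\im z|\ge 2\alpha/\beta$ (from Proposition \ref{prop:SB-Rtranforms} with $h=\alpha^2/(8|\log\mathcal I|)$); and (iii) apply a two-constants/maximum-principle comparison (Lemma \ref{lem:sub-Harmonic}) against the harmonic majorant $-\lambda\,\cosh(\pi y/a)\sin(\pi(x+\delta)/a)/\cosh(\pi b/a)$ on a rectangle with $a=\alpha/4$, $b=2\alpha/\beta$. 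That comparison, not any ``loss from iterating a Gaussian kernel over the cap,'' is where $\cosh(8\pi/\beta)$ and the radius of $G$ in \eqref{abierto} come from, and it yields the smallness $\mathcal I^{\kappa}$ with $\kappa\sim\cosh(8\pi/\beta)^{-2}$ of Proposition \ref{prop:SBnear(y_0,0)}. Relatedly, your parameter choice $\gamma\sim 1$ is not admissible and $\beta$ is not free (it is fixed by $\Gamma$): writing $\im\zeta=\im z\,\omega_0+\im w$ with $|\im w|$ allowed up to size $\sim\alpha$, the cap condition $\langle\omega_0,\im\zeta/|\im\zeta|\rangle^2>1-\beta^2/4$ forces $|\im z|\gtrsim\alpha/\beta$, i.e. $\gamma=2\alpha/\beta$; this coupling of $\gamma$ to $\beta$ fixes the rectangle height and hence the constants in the theorem. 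Once that propagation is in place, your endgame $\|q\|_{L^p(G)}\lesssim h^{\lambda/2}+h^{-n/2}\mathcal I^{\kappa}\lesssim|\log\mathcal I|^{-\lambda/2}$ goes through as you wrote.
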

 A precise  value of  $C$ in (\ref{es:LOGstabiltyRADON}) can be given as 
\begin{equation} \label{constantC} C={ C_n M \max \left( 1,\,   |G|^\frac{1}{p} \right) (1 + |y_0|) \left(\alpha^{-n} + \beta^{-n} + \alpha^\lambda\right)}.
\end{equation}

 Compared to Helgason support theorem, we relax  the decay condition to the one given in (\ref {decay})  and  the $L^\infty$ moduli of continuity are relaxed  to integral moduli of continuity, which, under the condition $ 0 < \lambda < 1/p $, {allow} non continuous functions and are preserved (modulo constants)   by multiplication by rough characteristic functions. These facts will be important in the applications.

It will be convenient to use the classical  
 $$  t_{+} = \max (t,0) \quad t_{-} = \min (t,0) $$
to denote the positive and negative parts of a  real number (or a function).
The bounds on the Segal-Bargmann transform can be improved whenever the function $f$ is supported on one side of a hyperplane.
Indeed if 
   $$ \supp f \subset H_0^+(s,\omega_0) $$
then we have, for any $y_0\in H_0(s,\omega_0)$,
\begin{align}\label{condicionsoporte}
   |\SB f(\zeta)| \leq (2\pi h)^{\frac{n}{2}} \e^{\frac{1}{2h}(\im \zeta)^2-\frac{1}{2h}\langle \re \zeta-y_0, \omega_{0}\rangle_{+}^2}  \|f\|_{L^{\infty}}.
\end{align}
Our first step  will be to extrapolate estimate (\ref{mhelgason}) to capture  points $\zeta\in \C^n$  with $\im\zeta =0$. Note that for those values
of the parameter $\zeta$, the Segal-Bargmann transform is a gaussian transform. As in  \cite{Sj}  (see also \cite[Lemma 9.6.5]{Ho1}) the following maximum principle for subharmonic functions will be  the keystone of the proof. 
 
\begin{lem}\label{lem:sub-Harmonic} Let $ a, b $  and $ \lambda$ be positive constants. Consider
\[ R = \big \{ z \in \C : |\mathrm{Re}\, z| < a,\, |\mathrm{Im}\, z| < b + \varepsilon \big\}, \]
for some $ \varepsilon > 0 $. Let $ F $ be a subharmonic function on $ R $ such that 
$$ F(z) < (\mathrm{Re}\, z_{-})^2 ,$$ for all $ z \in R $ and $$ F(z) < -\lambda ,$$ for $ z \in R $ such that $ |\mathrm{Im}\, z| \geq b $. 
Then, for 
\[ |\mathrm{Im}\, z| < b, \qquad |\mathrm{Re}\, z| < \frac{\delta}{2}, \] 
we  have
\[ F(z) < - \frac{\lambda}{2a\cosh \left(\pi \frac{b}{a} \right)} \delta, \]
where \[ \delta =\min \left( \frac{ 
\lambda}{2a\cosh \left( \pi \frac{b}{a} \right)}, \frac{a}{3} \right).\]
\end{lem}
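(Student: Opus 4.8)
The plan is to construct an explicit harmonic majorant on the rectangle $R$ and then invoke the maximum principle for subharmonic functions. First I would introduce the harmonic function
\[
\Phi(z) = (\re z)^2 - (\im z)^2 - \frac{\lambda}{\cosh(\pi b/a)} \, \re\!\left(\cos\frac{\pi z}{2a}\right),
\]
or a close variant, whose role is to dominate $F$ on the whole boundary of the slightly enlarged rectangle $R$. The motivation: $(\re z)^2-(\im z)^2 = \re(z^2)$ is harmonic and reproduces the bound $F(z)<(\re z_-)^2 \le (\re z)^2$ on the part of the boundary where $\im z$ is small; the term $\re\cos(\pi z/(2a))$ is harmonic, equals $\cos(\pi x/(2a))\cosh(\pi y/(2a))\ge 0$ on the strip $|\re z|\le a$, vanishes at $\re z=\pm a$, and on the horizontal sides $|\im z|=b$ it is at least $\cosh(\pi b/(2a))$ times a positive quantity — actually one wants it bounded below by a constant comparable to $\cosh(\pi b/a)$ so that the coefficient $\lambda/\cosh(\pi b/a)$ turns the $-\lambda$ hypothesis into a genuine majorization there. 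One may need to adjust the harmonic ``bump'' (for instance use $\cosh(\pi(\re z)/ (2a))\cos(\pi(\im z)/(2a))$ instead, i.e. swap the roles, so that it is positive and largest on the vertical sides where $|\re z|$ is close to $a$ and $F\le (\re z)^2$ could be as large as $a^2$); the precise choice is dictated by matching both boundary bounds simultaneously.

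Next I would verify, side by side, that $F < \Phi$ (or $F \le \Phi$ with strictness somewhere) on $\partial R$: on the vertical sides $|\re z| = a$ one uses $F(z) < (\re z_-)^2 \le a^2$ together with the positivity of the remaining harmonic terms of $\Phi$; on the horizontal sides $|\im z| = b$ (plus the small $\varepsilon$-collar, which is only there to make $R$ open and is harmless in the limit $\varepsilon\to 0$) one uses $F(z) < -\lambda$ and the fact that the harmonic bump is then large enough to absorb the positive $(\re z)^2$ contribution. Since $F-\Phi$ is subharmonic on $R$ and $\le 0$ on $\partial R$, the maximum principle gives $F \le \Phi$ throughout $R$. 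Then I evaluate $\Phi$ on the small central rectangle $|\re z| < \delta/2$, $|\im z| < b$: here $(\re z)^2 \le \delta^2/4$, and $\cos(\pi \re z/(2a)) \ge \cos(\pi\delta/(4a)) \ge \cos(\pi/12) \ge 1/2$ because $\delta \le a/3$, so the negative term is at most $-\tfrac12 \lambda/\cosh(\pi b/a)$. Choosing $\delta = \min\big(\lambda/(2a\cosh(\pi b/a)),\, a/3\big)$ exactly balances $\delta^2/4$ against the gain, and one reads off $F(z) < -\lambda\delta/\big(2a\cosh(\pi b/a)\big)$, which is the claimed bound (the precise constant $1/(2a\cosh(\pi b/a))$ may require a slightly sharper bookkeeping of the $\cosh(\pi b/(2a))$ versus $\cosh(\pi b/a)$ factors — one can always weaken $\Phi$'s coefficient or enlarge the denominator constant, but the statement as written should come out with care).

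The main obstacle I anticipate is purely the \emph{quantitative} matching: getting the constant in the final bound to be exactly $\lambda\delta/\big(2a\cosh(\pi b/a)\big)$ rather than some larger universal multiple of it. This forces a careful choice of the harmonic comparison function so that (i) on the horizontal sides the bump exceeds $a^2 + \lambda$ in the right proportion after multiplying by its coefficient, (ii) on the vertical sides the bump is nonnegative, and (iii) at the center the bump is bounded below by a fixed fraction (here $1/2$) using $\delta \le a/3$. A secondary, more routine point is the harmless $\varepsilon$-collar: since $F$ is only assumed subharmonic on the open set $R$ (which already contains the collar), one simply applies the maximum principle on the compact rectangle $\{|\re z|\le a,\ |\im z|\le b\}$ contained in $R$, so no limiting argument in $\varepsilon$ is actually needed — the collar is there only to guarantee subharmonicity up to $|\im z|=b$. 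Finally, I would note that $F$ need not be continuous up to the boundary in a strong sense; the open-set hypotheses and the fact that we work on a strictly smaller closed rectangle make the standard maximum principle directly applicable.
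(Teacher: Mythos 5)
Your overall strategy (an explicit harmonic comparison function plus the maximum principle) is the same in spirit as the paper's, but the way you set up the comparison has a genuine gap, and it is not merely "quantitative bookkeeping". Concretely: (i) your majorant does not dominate $F$ on the boundary of the full rectangle. On the vertical sides $\re z=\pm a$ the bump $\re\cos(\pi z/(2a))$ vanishes, so $\Phi(\pm a+iy)=a^2-y^2$, which is strictly below the only available bound $F<(\re z_{-})^2\le a^2$ on the left side; and on the horizontal sides the term $-(\im z)^2$ makes $\Phi\approx x^2-b^2$ near the corners, which can lie far below $-\lambda$, so $F\le\Phi$ is not established there either. The "swapped" bump $\cosh(\pi x/(2a))\cos(\pi y/(2a))$ does not fix this: it is not of one sign on the rectangle once $b>a$. (ii) More fundamentally, no harmonic majorant on the full rectangle $[-a,a]\times[-b,b]$ can yield the conclusion if one only uses the boundary information ($F<a^2$ on the left side, $F<0$ on the right side, $F<-\lambda$ on the horizontal sides): for $b\gg a$ the harmonic measure of the left vertical side at points with $|\re z|<\delta/2$ is bounded below by an absolute constant (close to $1/2$), while that of the horizontal sides is $O(\e^{-\pi b/(2a)})$, so the best harmonic function with these boundary values is about $a^2/2>0$ at such points. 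One must use the one-sided hypothesis $F<(\re z_{-})^2$ along interior vertical lines, not only on $\partial R$. (The fact that your computation at the center produces $-\tfrac12\lambda/\cosh(\pi b/a)$, which is \emph{stronger} than the lemma's $-\lambda\delta/(2a\cosh(\pi b/a))$, is itself a warning sign.)

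The paper's proof implements exactly this missing idea: it compares the subharmonic function $F(z)-\delta^2$ with the harmonic function $G(z)=-\lambda\,\frac{\cosh(\pi y/a)}{\cosh(\pi b/a)}\,\sin\bigl(\tfrac{\pi}{a}(x+\delta)\bigr)$ on the \emph{shifted} rectangle $R_\delta=[-\delta,a-\delta]\times[-b,b]$. On its vertical sides $G=0$ while $F-\delta^2<0$, because $(\re z_{-})^2\le\delta^2$ at $x=-\delta$ and $(\re z_{-})^2=0$ at $x=a-\delta$; on the horizontal sides $G\ge-\lambda>F-\delta^2$. The maximum principle gives $F<\delta^2+G$ on $R_\delta$, and for $|x|<\delta/2$ (which needs $\delta\le a/3$ so that $0<x+\delta<a/2$) the elementary bound $\sin t>2t/\pi$ yields $F<\delta^2-\frac{\lambda\delta}{a\cosh(\pi b/a)}$; finally $\delta\le\lambda/(2a\cosh(\pi b/a))$ absorbs the $\delta^2$ and gives the stated estimate. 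Note that the gain is linear in $\delta$ precisely because the evaluation point lies at distance about $\delta$ from a side of the comparison rectangle where the data are small; any correct argument must reproduce this structure rather than evaluate a symmetric majorant at the center of the full rectangle.
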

\begin{proof}
     The claim  follows  by comparison of the subharmonic function $F(z) -\delta^2$ with the harmonic function  
         $$ G(z)= -\lambda \frac{\cosh \left( \frac{\pi}{a} y \right)}{\cosh \left( \frac{\pi}{a} b \right)} \sin \left( \frac{\pi}{a} (x + \delta) \right), $$
where $z=x+iy$, is in the   rectangle $R_\delta=[-\delta, a-\delta]\times [-b, b]$. 
 In fact, on the boundary of $R_\delta$ we have
 \begin{align*}
    F(x \pm ib) -\delta ^2&< -\lambda \leq G(x \pm ib) \text{ for } x \in [- \delta, a - \delta ] , \\
    F(- \delta + iy)-\delta ^2 &< 0 =G(- \delta + iy) 
\intertext{and }
     F(a- \delta + iy)-\delta ^2 &< 0 =G(a- \delta + iy).
\end{align*}
From the maximum principle     $ F(z) < G(z)+ \delta ^2$ in $ R_\delta $, which means that 
\begin{align*}
F(x + iy) &< \delta^2 - \lambda \frac{\cosh \left( \frac{\pi}{a} y \right)}{\cosh \left( \frac{\pi}{a} b \right)} \sin \left( \frac{\pi}{a} (x + \delta) \right).
\intertext{Since $ \sin t > 2t / \pi $ for $ 0 < t < \pi / 2 $, one has that}
F(x + iy) &< \delta^2 - \lambda \frac{\cosh \left( \frac{\pi}{a} y \right)}{\cosh \left( \frac{\pi}{a} b \right)} \frac{2}{a} (x + \delta), 
\intertext{whenever $ 0 < x + \delta < a / 2 $. So if $ x $ is restricted to $ |x| < \delta / 2 $, then}
     F(x + iy) &< \delta^2 - \frac{\lambda}{a\cosh \left( \frac{\pi}{a} b \right)}  \delta \\
    &\leq - \frac{\lambda}{2a\cosh \left( \frac{\pi}{a} b \right)}  \delta.
\end{align*} 
This completes the proof of the Lemma.
\end{proof}

\begin{prop} \label{prop:SBnear(y_0,0)} Consider $ q \in X \cap L^\infty(\R^n) $ and let $ y_0 \in \R^n $ and $ \omega_0 \in \Sph^{n - 1} $ be such that $ y_0 \in \mathrm{supp}\, q $ and $ \mathrm{supp}\, q \subset \{ x \in \R^n : \langle x - y_0, \omega_0\rangle \leq 0 \} $. Given $ \alpha > 0 $ and $ \beta \in (0, 1] $ consider the set
\[ \Gamma = \{ \omega \in \Sph^{n - 1} : \langle\omega ,\omega_0\rangle^2 > 1 - \beta^2 \}. \]
If one has
\[ \int_{(-\alpha,\alpha)} (1 + |s|)^n \norm{\Rad_{y_0}q(s, \cdot)}{}{L^1(\Gamma)} \, \dd s \leq \e^{-\frac{\alpha^2}{8}}, \]
there exists a positive constant $ C $, only depending on $ n $, such that 
\begin{multline}\label{es:SBnear(y_0,0)}
\e^{- \frac{1}{2h}|\mathrm{Im}\, \zeta|^2} |\SB q (\zeta)| \\ \leq C M_q  \left(1 + |y_0| + \frac{\alpha}{\beta} \right)^n\, \left( \int_{-\alpha}^\alpha (1 + |s|)^n \norm{\Rad_{y_0}q(s, \cdot)}{}{L^1(\Gamma)} \, \dd s \right)^\kappa,
\end{multline}
with
\begin{gather*}
\kappa < \frac{1}{8 \left( \cosh  \left( \frac{8 \pi}{\beta} \right) \right)^2}, \quad M_q := \max (1, \norm{q}{}{L^\infty(\R^n)} + \norm{q}{}{X}), \\
\quad h = \frac{\alpha^2}{8|\log \int_{-\alpha}^\alpha (1 + |s|)^n \norm{\Rad_{y_0}q(s, \cdot)}{}{L^1(\Gamma)} \, \dd s|},
\end{gather*}
for all $ \zeta \in \C^n $ such that
\begin{equation}\label{hipotesisz} |\mathrm{Re}\, \zeta - y_0| < \frac{\alpha}{8 \cosh  ( 8 \pi / \beta )}, \qquad |\mathrm{Im}\, \zeta| < \frac{2 \alpha}{(4 - \beta^2)^{1/2}}. 
\end{equation}
\end{prop}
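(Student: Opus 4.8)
\emph{Proof proposal.} The plan is to reduce \eqref{es:SBnear(y_0,0)} to a one–dimensional statement on a complex line through $\zeta$ in the direction $\omega_0$ and to apply the subharmonic maximum principle of Lemma~\ref{lem:sub-Harmonic}, fed with the support bound \eqref{condicionsoporte} on part of the boundary of a rectangle and with the quantitative microlocal Helgason estimate \eqref{mhelgason} on the rest.

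Put $\rho_0=\alpha/(8\cosh(8\pi/\beta))$ and $I=\int_{-\alpha}^{\alpha}(1+|s|)^n\|\Rad_{y_0}q(s,\cdot)\|_{L^1(\Gamma)}\,\dd s$; the hypothesis $I\le\e^{-\alpha^2/8}$ forces $h=\alpha^2/(8|\log I|)\in(0,1]$ and $\e^{-\alpha^2/(8h)}=I$. Choose $a=\alpha/4$, $b=2\alpha/\beta$, $\lambda=\alpha^2/8$, so that $\pi b/a=8\pi/\beta$. Fix $\zeta$ as in \eqref{hipotesisz}, write $\im\zeta=\mu\omega_0+b_\perp$ with $b_\perp\perp\omega_0$, set $c=\langle\re\zeta-y_0,\omega_0\rangle$ (so $|c|<\rho_0$), and consider the affine complex line $\tau\mapsto\zeta(\tau)=z_0+\tau\omega_0$ with $z_0=(\re\zeta-c\omega_0)+ib_\perp$. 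Then $\zeta(\tau_*)=\zeta$ for $\tau_*:=c+i\mu$, one has $\langle\re\zeta(\tau)-y_0,\omega_0\rangle=\re\tau$ (because $\re z_0$ lies on the hyperplane $\langle x-y_0,\omega_0\rangle=0$) and $|\im\zeta(\tau)|^2=|b_\perp|^2+(\im\tau)^2$. The decisive trick is to untwist the Gaussian: since $(\zeta(\tau)-y)^2=(z_0-y)^2+2\tau\,\omega_0\!\cdot\!(z_0-y)+\tau^2$, the function $g(\tau):=\SB q(\zeta(\tau))\,\e^{\tau^2/(2h)}$ is holomorphic in $\tau$, with $|g(\tau)|=\e^{\frac{1}{2h}((\re\tau)^2-(\im\tau)^2)}|\SB q(\zeta(\tau))|$, so that
\[
F(\tau):=2h\log|g(\tau)|-2h\log\big((2\pi h)^{n/2}M_q\big)-|b_\perp|^2
\]
is subharmonic. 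A direct computation rewrites this as $2h\log\big(\e^{-\frac{1}{2h}|\im\zeta(\tau)|^2}|\SB q(\zeta(\tau))|\big)=F(\tau)+2h\log\big((2\pi h)^{n/2}M_q\big)-(\re\tau)^2$, hence at $\tau=\tau_*$ (where $|\im\zeta(\tau_*)|=|\im\zeta|$ and $(\re\tau_*)^2=c^2\ge0$)
\[
\e^{-\frac{1}{2h}|\im\zeta|^2}|\SB q(\zeta)|\ \le\ (2\pi h)^{n/2}M_q\,\e^{F(\tau_*)/(2h)}.
\]

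It remains to verify the two hypotheses of Lemma~\ref{lem:sub-Harmonic} on the rectangle $R=\{|\re\tau|<a,\ |\im\tau|<b+\varepsilon\}$. For the barrier bound, assumption~(b) and \eqref{condicionsoporte} (taking the reference point on the hyperplane through $y_0$) give $|\SB q(\zeta(\tau))|\le(2\pi h)^{n/2}\|q\|_{L^\infty}\e^{\frac{1}{2h}(|b_\perp|^2+(\im\tau)^2-(\re\tau)_+^2)}$; combining this with the formula for $|g|$ and with $\|q\|_{L^\infty}\le M_q$ one gets $F(\tau)\le(\re\tau)^2-(\re\tau)_+^2=(\re\tau)_-^2$ on $R$. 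For the smallness on $\{|\im\tau|\ge b\}$, the point $\zeta(\tau)$ there meets all the requirements of Proposition~\ref{prop:SB-Rtranforms} with $\gamma=b$: indeed $|\re\zeta(\tau)-y_0|^2=|\re z_0-y_0|^2+(\re\tau)^2<\rho_0^2+a^2<\alpha^2/4$; $|\im\zeta(\tau)|\ge|\im\tau|\ge b$; and $\langle\omega_0,\im\zeta(\tau)/|\im\zeta(\tau)|\rangle^2=(\im\tau)^2/(|b_\perp|^2+(\im\tau)^2)>1-\beta^2/4$, the last inequality holding because $b=2\alpha/\beta$ and $|b_\perp|\le|\im\zeta|<2\alpha/(4-\beta^2)^{1/2}$. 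Since $b^2\beta^2/32=\alpha^2/8$, both exponentials $\e^{-\alpha^2/(8h)}$ and $\e^{-b^2\beta^2/(32h)}$ in \eqref{mhelgason} equal $I$, so \eqref{mhelgason} yields $\e^{-\frac{1}{2h}|\im\zeta(\tau)|^2}|\SB q(\zeta(\tau))|\le C_n h^{-n/2}(1+|y_0|+\alpha/\beta)^n M_q\,I$ on that set; inserting this in the identity for $F$ and using $2h\log I=-\alpha^2/4$, $(\re\tau)^2\le\alpha^2/16$, and $h\log(1/h)\to0$, one finds $F(\tau)<-\alpha^2/8=-\lambda$ on $\{|\im\tau|\ge b\}$ as soon as $I$ is small enough that the remaining $O\big(h\log(1/h)+h\log(1+|y_0|+\alpha/\beta)\big)$ terms drop below $\alpha^2/16$; the complementary bounded range of $I$ costs nothing because $\e^{-\frac{1}{2h}|\im\zeta|^2}|\SB q(\zeta)|\le(2\pi)^{n/2}M_q$ always.

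Lemma~\ref{lem:sub-Harmonic} then applies with these $a,b,\lambda$; since $\cosh(8\pi/\beta)>3$ its parameter $\delta$ equals $\lambda/(2a\cosh(8\pi/\beta))=\alpha/(4\cosh(8\pi/\beta))=2\rho_0$, so $\tau_*$ (with $|\re\tau_*|=|c|<\rho_0=\delta/2$ and $|\im\tau_*|=|\mu|\le|\im\zeta|<b$) lies in the region where the conclusion holds, giving $F(\tau_*)<-\delta^2=-\alpha^2/(16\cosh^2(8\pi/\beta))$. Feeding this into the last displayed inequality and using $\delta^2/(2h)=|\log I|/(4\cosh^2(8\pi/\beta))$ gives $\e^{-\frac{1}{2h}|\im\zeta|^2}|\SB q(\zeta)|\le(2\pi)^{n/2}M_q\,I^{1/(4\cosh^2(8\pi/\beta))}$, which implies \eqref{es:SBnear(y_0,0)} for any $\kappa<1/(8\cosh^2(8\pi/\beta))$ since $I<1$. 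I expect the main obstacle to be exactly this balancing of parameters: one needs $a,b,\lambda$ for which $\pi b/a=8\pi/\beta$, the rectangle is tall enough to contain $|\im\zeta|$, both hypotheses of Lemma~\ref{lem:sub-Harmonic} hold with a margin, and $\delta$ comes out equal to twice the admissible size of $\re\tau_*$ — and then one must absorb the unavoidable polynomial-in-$1/h$ losses in the slack between the exponent $1/(4\cosh^2(8\pi/\beta))$ actually produced and the asserted $\kappa$. The secondary subtlety is the observation that untwisting by $\e^{\tau^2/(2h)}$ is what simultaneously renders $F$ genuinely subharmonic and keeps the barrier bound tight.
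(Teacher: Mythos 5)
Your proposal is correct and follows essentially the same route as the paper's proof: restriction to the complex line $\tau\mapsto\zeta(\tau)$ in the direction $\omega_0$, a subharmonic function built from $2h\log|\SB q(\zeta(\tau))|$ plus the harmonic twist $(\re\tau)^2-(\im\tau)^2$, the barrier from the support bound \eqref{condicionsoporte}, smallness on $\{|\im\tau|\geq b\}$ from Proposition \ref{prop:SB-Rtranforms} with $\gamma=2\alpha/\beta$, and Lemma \ref{lem:sub-Harmonic} with exactly the paper's parameters $a=\alpha/4$, $b=2\alpha/\beta$, $\lambda=\alpha^2/8$ and the same choice of $h$. The only deviation is bookkeeping: the paper folds the prefactor $Ch^{-n/2}M_q(1+\rho+|y_0|)^n$ of Proposition \ref{prop:SB-Rtranforms} into the normalization of the subharmonic function (so no smallness of $\mathcal I$ beyond the stated hypothesis is needed, the loss being absorbed in the gap between $1/(4\cosh^2(8\pi/\beta))$ and $\kappa$), whereas your extra smallness threshold on $I$ together with the trivial bound on the complementary range makes your final constant depend mildly on $\alpha$, $\beta$, $|y_0|$ and not only on $n$ --- a harmless difference in view of how the proposition is actually used, and of the same order of looseness as the paper's own constant tracking.
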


\begin{proof}  Let $ \zeta \in \C^n $ and   denote $ z = \langle\omega_0 , \zeta - y_0 \rangle\in \C$. 
We write $ \zeta = (z + \langle\omega_0 , y_0\rangle) \omega_0 + w $ with $ w \in \C^n $ such that $\langle \mathrm{Re}\, w , \omega_0 \rangle= \langle \mathrm{Im}\, w , \omega_0\rangle = 0$. Let us denote
  $$\mathcal  I=\int_{(-\alpha, \alpha)} (1 + |s|)^n \norm{\Rad_{y_0}q(s, \cdot)}{}{L^1(\Gamma)} \, \dd s.$$
Choose $ \gamma =\frac{2\alpha}{\beta}> 0 $ as in Proposition \ref{prop:SB-Rtranforms}, then
\begin{align*}
|\SB q ((z &+ \langle\omega_0, y_0\rangle) \omega_0 + w)| \\ 
&\leq C M_q (1 + \rho+|y_0|)^n\, \e^{\frac{1}{2h}|\mathrm{Im}\, z|^2+\frac{1}{2h}|\mathrm{Im}\, w|^2}  
 \Big( h^{-\frac{n}{2}}  \mathcal I + h^{-\frac{n}{2}} \e^{-\frac{1}{2h}\frac{\alpha^2}{4}} \Big)\\
&\begin{aligned} \leq C M_q (1 + \rho+|y_0|)^n\, &\e^{\frac{1}{2h}|\mathrm{Im}\, z|^2+\frac{1}{2h}|\mathrm{Im}\, w|^2} \e^{-\frac{1}{2h}\frac{\alpha^2}{16}} \\ &\times \Big( h^{-\frac{n}{2}} \e^{\frac{1}{2h}\frac{\alpha^2}{16}}  \mathcal I + h^{-\frac{n}{2}} \e^{-\frac{1}{2h}\frac{3\alpha^2}{16}} \Big) \end{aligned}
%\leq C M_q (1 + \alpha + \rho)^n\, e^{\frac{1}{2h}|\mathrm{Im}\, z|^2} e^{\frac{1}{2h}|\mathrm{Im}\, w|^2} e^{-\frac{1}{2h}\frac{\alpha^2}{16}}  
 %\left( e^{\frac{1}{2h}\frac{\alpha^2}{8}}  \mathcal I+ e^{-\frac{1}{2h}\frac{\alpha^2}{8}} \right),
\end{align*}
for all $ z \in \C $ and $ w \in \C^n $ such that
\begin{align}
    \label{condicionw1} |\mathrm{Re}\, z|^2 + |\mathrm{Re}\, w - y_0 + \langle \omega_0 , y_0\rangle\omega_0|^2 &< \alpha^2 / 4 , \\
    \label{condicionw2} |\mathrm{Im}\, z| &\geq 2 \alpha / \beta , \\
    \label{condicionw3} |z + \langle \omega_0, y_0 \rangle|^ 2 + |w|^ 2 &< \rho^2 ,  \\
\intertext{and } 
  \label{condicionw4} |\mathrm{Im}\, z|^ 2 / (|\mathrm{Im}\, z|^ 2 + |\mathrm{Im}\, w|^ 2) &> 1 - \beta^ 2 / 4,
\end{align}  
  where $ \rho>0 $ is large enough.
We next consider $ w \in \C^n $ such that 
\begin{align}
     \label{condicionw5} |\mathrm{Re}\, w - y_0 + \langle\omega_0 ,y_0\rangle\omega_0|^2 &< 3 \alpha^2 / 16 \\
 \intertext{and }
     \label{condicionw6} |\mathrm{Im}\, w|^2 &< 4 \alpha^2 / (4 - \beta^2). 
 \end{align} 
 Then we have
\begin{multline}\label{es:SBfromRadon}
     |\SB q ((z + \langle\omega_0, y_0\rangle) \omega_0 + w)| 
\leq C M_q (1 + \rho+|y_0|)^n\, h^{-n/2} \\ \times\e^{\frac{1}{2h}(|\mathrm{Im}\, z|^2 - |\mathrm{Re}\, z|^2)+\frac{1}{2h}|\mathrm{Im}\, w|^2} 
\Big(\e^{\frac{1}{2h}\frac{\alpha^2}{8}}  \mathcal I + \e^{-\frac{1}{2h}\frac{\alpha^2}{8}} \Big),
\end{multline}
and conditions (\ref{condicionw1})-(\ref{condicionw4}) reduce to  
\begin{equation}\label{condicionz}
|\mathrm{Re}\, z|^2 < \frac{\alpha^2}{16}, \qquad \frac{4 \alpha^2}{\beta^2} \leq |\mathrm{Im}\, z|^2 \leq \rho^2 - \frac{4 \alpha^2}{4 - \beta^2}, 
\end{equation}
with $ \rho $ large enough. 

Whenever 
$  \mathcal I \leq \e^{-\frac{\alpha^2}{8}} $,
one can choose $ h=\frac{\alpha^2 }{ 8|\log  \mathcal I |} \in (0, 1] $ so that
\[ \e^{\frac{1}{2h}\frac{\alpha^2}{8}} \mathcal I = \e^{-\frac{1}{2h}\frac{\alpha^2}{8}}. \]
From (\ref{condicionsoporte}) we have
\begin{equation}\label{es:SBgood}
|\SB q ((z + \langle\omega_0, y_0\rangle) \omega_0 + w)| \leq C M_q h^{\frac{n}{2}} \e^{\frac{1}{2h}|\mathrm{Im}\, z|^2+\frac{1}{2h}|\mathrm{Im}\, w|^2-\frac{1}{2h}(\mathrm{Re}\, z_+)^2},
\end{equation}
for all $ w \in \C^n $ and $ z \in \C $.  Recall that $ M_q = \max (1, \norm{q}{}{L^\infty(\R^n)} + \norm{q}{}{X}) $.

Consider the sub-harmonic function $ \Phi $ defined as
\begin{gather*}
\Phi (z) = |\mathrm{Re}\, z|^2 - |\mathrm{Im}\, z|^2 + 2h \log |\SB q ((z + \langle\omega_0 , y_0\rangle) \omega_0 + w)|\\
+ 2h \log \left( \frac{\e^{-\frac{1}{2h}|\mathrm{Im}\, w|^2}}{Ch^{-n/2} M_q (1 + \rho+|y_0|)^n} \right),
\end{gather*}
where the variable $ w $ has been frozen. From (\ref{es:SBgood}) and (\ref{es:SBfromRadon}) one derives that
\[ \Phi (z) < (  \mathrm{Re}\, z_{-})^2, \]
for all $ z \in \C $; and
\[ \Phi (z) < h \log \mathcal I  = - \frac{\alpha^2}{8}, \]
for all $ z \in \C $ satisfying (\ref{condicionz}).

 It is clear that $ \Phi $  satisfies  the conditions of Lemma \ref{lem:sub-Harmonic}, with parameters $a=\frac \alpha 4$, $b=\frac{2\alpha}{\beta}$ and $\lambda =\frac{\alpha^2}{8}$, hence we might conclude that
\[ \Phi(z) < - \frac{h \log(\int_{-\alpha}^\alpha (1 + |s |)^n \norm{\Rad_{y_0}q(s, \cdot)}{}{L^1(\Gamma)} \, \dd s)^{-1}}{2 \left( \cosh  ( 8 \pi / \beta ) \right)^2}, \]
for $ z \in \C $ such that
\[ |\mathrm{Re}\, z| < \frac{\alpha}{8 \cosh  ( 8 \pi / \beta )}, \qquad |\mathrm{Im}\, z| < \frac{2 \alpha}{\beta}. \]
Choosing $ \rho = 4 \alpha / \beta + |y_0| $, one can translate this estimate into the statement of the proposition, notice that (\ref{condicionw5}), (\ref{condicionw6}) and (\ref{condicionz}) follow  from (\ref{hipotesisz})  since 
$$| \mathrm{Re}\, \zeta -y_0| ^2 = |\mathrm{Re}\, z| ^2 + |\mathrm{Re}\, w - y_0 + \langle\omega_0 ,y_0\rangle\omega_0|^2$$
and $$|\mathrm{Im}\, \zeta|^2 =|\mathrm{Im}\, z|^2 +|\mathrm{Im}\, w|^2.$$
 This completes the proof of the proposition. 
\end{proof}

 \begin{proof}[Proof of Theorem \ref{th:localLOGstability}]
The key point   is the fact that the Segal-Bargmann transform restricted to real values is a convolution with the Gaussian. We exploit this by means of the following lemma. Since actually  we need a backward estimate for the heat equation, which is  an ill posed problem, we   requires the uniform  Besov  control of the potentials.

\begin{lem}\label{le:Lp_gaussianCONV}
Consider $ q \in L^p(\R^n) $ and $ G $ an open set in $ \R^n $.
Assume that there exists $ \lambda \in (0, 1) $ such that
\[ L_q := \left( \int_{\R^n} \frac{\norm{q - q(\cdot - y)}{p}{L^p(\R^n)}}{|y|^{n + \lambda p}} \, \dd y \right)^{1/p} < + \infty. \]
Then, there exists a positive constant $ C $, only depending on $ n $, such that
\[ \norm{q}{}{L^p(G)} \leq C \left( h^{-\frac{n}{2}} \norm{\SB q  }{}{L^p(G)} + L_q h^\frac{\lambda}{2} \right), \]
for all $ h \in (0, 1] $.  
\end{lem}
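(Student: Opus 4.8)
The plan is to use the only feature of $\SB$ that survives restriction to real arguments: there it is a constant multiple of convolution with a Gaussian mollifier. Writing $\Phi_h(x)=(2\pi h)^{-n/2}\e^{-\frac{1}{2h}|x|^2}$, a probability density on $\R^n$, one has $\SB q=(2\pi h)^{n/2}\,\Phi_h*q$. The estimate then follows from the elementary splitting $q=(q-\Phi_h*q)+\Phi_h*q$: the mollification error $q-\Phi_h*q$ is controlled by $L_q h^{\lambda/2}$ via the Besov-type hypothesis, while $\Phi_h*q$ is controlled trivially by $\SB q$.

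For the error term I would write, using $\int\Phi_h=1$,
\[ q(x)-(\Phi_h*q)(x)=\int_{\R^n}\Phi_h(y)\big(q(x)-q(x-y)\big)\,\dd y, \]
apply Minkowski's integral inequality in the $x$-variable to get $\norm{q-\Phi_h*q}{}{L^p(\R^n)}\leq\int_{\R^n}\Phi_h(y)\,\norm{q-q(\cdot-y)}{}{L^p(\R^n)}\,\dd y$, and then insert the weight $|y|^{(n+\lambda p)/p}$ and its reciprocal, applying H\"older's inequality with exponents $p'=p/(p-1)$ and $p$:
\[ \int_{\R^n}\Phi_h(y)\,\norm{q-q(\cdot-y)}{}{L^p(\R^n)}\,\dd y\leq\Big(\int_{\R^n}\Phi_h(y)^{p'}|y|^{(n+\lambda p)p'/p}\,\dd y\Big)^{1/p'}L_q. \]
The substitution $y=\sqrt h\,z$ evaluates the remaining Gaussian integral as $h^{-n/2}\cdot h^{n/(2p')}\cdot h^{(n+\lambda p)/(2p)}$ times a finite constant depending only on $n$ (and, harmlessly, on $p$ and $\lambda$); since $\tfrac{n}{2p'}=\tfrac n2-\tfrac{n}{2p}$ the powers of $h$ add up to $\tfrac\lambda2$, so $\norm{q-\Phi_h*q}{}{L^p(\R^n)}\leq C\,L_q\,h^{\lambda/2}$.

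Restricting to $G$ and using the triangle inequality,
\[ \norm{q}{}{L^p(G)}\leq\norm{q-\Phi_h*q}{}{L^p(G)}+\norm{\Phi_h*q}{}{L^p(G)}\leq C\,L_q\,h^{\lambda/2}+C\,h^{-n/2}\norm{\SB q}{}{L^p(G)}, \]
which, after absorbing $(2\pi)^{\pm n/2}$ into $C$, is the assertion. I do not expect a genuine obstacle; the two points requiring care are the Gaussian normalization (so that $\Phi_h$ is a probability density, which also legitimizes the convolution manipulations for $q\in L^p$ since $\Phi_h$ lies in every $L^r$) and the bookkeeping of the powers of $h$ in the scaling step, which must land exactly on $\lambda/2$. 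Conceptually this lemma is the quantitative surrogate for the ill-posed backward heat flow: smallness of $\SB q$ only yields smallness of $\Phi_h*q$, and it is precisely the uniform Besov control $L_q$ that turns the mollification defect into the gain $h^{\lambda/2}$.
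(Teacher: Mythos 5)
Your proposal is correct and takes essentially the same route as the paper: the paper also splits $q$ into its Gaussian mollification (which on real points is exactly $(2\pi h)^{-n/2}\SB q$) plus the mollification error, and bounds the error by Minkowski's integral inequality followed by the same weighted H\"older/scaling computation that produces the factor $h^{\lambda/2}L_q$. The only cosmetic remark is that your H\"older step with exponent $p'=p/(p-1)$ should be read in its sup-form when $p=1$ (replacing the $L^{p'}$ Gaussian moment by $\sup_y \Phi_h(y)|y|^{n+\lambda}$), which changes nothing in the scaling.
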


\begin{proof} Since
\[ q(x) = \frac{1}{(2\pi h)^\frac{n}{2}} \SB q(x) + \frac{1}{(2\pi)^\frac{n}{2}} \int_{\R^n} \e^{-\frac{1}{2}|y|^2} (q(x) - q(x - h^\frac{1}{2} y)) \, \dd y \]
almost everywhere in $ G $,
\begin{align*}
\norm{q}{}{L^p(G)} &\leq \frac{1}{(2 \pi h)^\frac{n}{2}} \norm{\SB q |_{\R^n}}{}{L^p(G)} \\ &\quad + \norm{\frac{1}{(2\pi)^\frac{n}{2}} \int_{\R^n} \e^{-\frac{1}{2}|y|^2} |q - q(\cdot - h^\frac{1}{2} y)| \, \dd y}{}{L^p(G)}.
\end{align*}
Minkowski's inequality ensures that there exists a positive constant $ C $, only depending on $ n $, such that
\begin{multline*}
\norm{\int_{\R^n} |q - q(\cdot - h^\frac{1}{2} y)| \frac{\e^{-\frac{1}{2}|y|^2}}{(2\pi)^\frac{n}{2}} \, \dd y}{p}{L^p(G)} \\
\leq C^p h^\frac{\lambda p}{2} \int_{\R^n} \frac{\norm{q - q(\cdot - y)}{p}{L^p(\R^n)}}{|y|^{n + \lambda p}} \, \dd y.
\end{multline*}
This completes the proof of the Lemma.
\end{proof}

Now we return to the proof of the Theorem.
In the case
\[ \int_{(-\alpha, \alpha)} (1 + |s|)^n \norm{\Rad_{y_0}(q)(s, \cdot)}{}{L^1(\Gamma)} \, \dd s \leq \e^{-\frac{\alpha^2}{8}} \]
holds, then Theorem \ref{th:localLOGstability} is a consequence of Proposition \ref{prop:SBnear(y_0,0)} and Lemma \ref{le:Lp_gaussianCONV} for the function $ \mathbf{1}_E q $. On the other hand, if
\[ \int_{(-\alpha, \alpha)} (1 + |s|)^n \norm{\Rad_{y_0}(q)(s, \cdot)}{}{L^1(\Gamma)} \, \dd s \geq \e^{-\frac{\alpha^2}{8}}, \]
the conclusion of the statement of Theorem \ref{th:localLOGstability} is obvious.
\end{proof}

\end{section}
\begin{section}{First application: illuminating $\Omega$ from infinity (BU)}\label{sec:BU}

We  use the solutions of the Schr\"odinger equation in the maximal domain of the  Laplace operator in $\Omega$. These solutions,  satisfying the support  condition,  were constructed by Nachman and Street in \cite{NS} in the context of \cite {KSU}, but  the construction in the case of illumination from infinity, \cite{BU},  is  easier  and follows the  same  steps. We  will collect some  estimates  for  these  solutions  which  are (some of them  implicitely) contained in \cite{BU} and  \cite{NS}.

Let  $ H(\Omega; \Delta) $ denote the elements of $ L^2(\Omega) $ such that their weak Laplacean also belong to $ L^2(\Omega) $.

\begin{lem}[Bukhgeim and Uhlmann\cite{BU}]\label{BU1} Assume that $\partial \Omega \in \mathcal{C}^2$. Then  the trace maps 
$$\tr_0 u= u |_{\partial \Omega}$$
and
$$\tr_1 u=  \partial_\nu u|_{\partial \Omega} , $$
defined in $\mathcal{C}^\infty$ have an extension, again denoted as $\tr_j$,  $j=0,1$ which is continuous from  $ H(\Omega; \Delta)$ to the Sobolev space $H^{-j-1/2}(\partial \Omega).$

If we assume in  addition that  $ \tr_0 u\in H^{3/2}$, then  $u\in H^2(\Omega)$ and 
$$\|u\|_{H^2(\Omega)}+ \|\tr_1 u\|_{H^{1/2}(\partial \Omega)}\leq C(\|u\|_{H(\Omega; \Delta)} +\|\tr_0 u\|_{H^{3/2}(\partial \Omega)}),$$
for some constant $C>0$.
 \end{lem}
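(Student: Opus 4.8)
The plan is to proceed in the classical way for trace theorems in the maximal domain $H(\Omega;\Delta)$, using duality and Green's formula. First I would establish the continuity of $\tr_0$ and $\tr_1$. For $u\in\mathcal C^\infty(\bar\Omega)$ and any $\phi\in H^{1/2}(\partial\Omega)$, pick a lift $\Phi\in H^1(\Omega)$ with $\Phi|_{\partial\Omega}=\phi$ and $\|\Phi\|_{H^1(\Omega)}\le C\|\phi\|_{H^{1/2}(\partial\Omega)}$; then Green's identity
\[
\langle \tr_1 u\mid \phi\rangle = \int_\Omega \Delta u\,\bar\Phi\,\dd x + \int_\Omega \nabla u\cdot\nabla\bar\Phi\,\dd x
\]
(valid for smooth $\Phi$ but the statement uses only $H^1$ data) — wait, this needs $u\in H^1$, so for $\tr_1$ one rather pairs against $\phi\in H^{3/2}$, lifts to $\Phi\in H^2(\Omega)$, and integrates by parts twice to move both derivatives onto $\Phi$:
\[
\langle \tr_1 u\mid \phi\rangle - \langle \tr_0 u\mid \partial_\nu\Phi\rangle = \int_\Omega u\,\overline{\Delta\Phi}\,\dd x - \int_\Omega \Delta u\,\bar\Phi\,\dd x,
\]
which shows $|\langle\tr_1 u\mid\phi\rangle|\lesssim \|u\|_{H(\Omega;\Delta)}(\|\phi\|_{H^{3/2}}+\|\partial_\nu\Phi\|_{H^{1/2}})\lesssim \|u\|_{H(\Omega;\Delta)}\|\phi\|_{H^{3/2}(\partial\Omega)}$, giving $\tr_1:H(\Omega;\Delta)\to H^{-3/2}(\partial\Omega)$ once we know $\tr_0$ is already controlled; the bound for $\tr_0$ is obtained by the same scheme pairing against $H^{1/2}$ data lifted to $H^1$, integrating by parts once. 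Density of $\mathcal C^\infty(\bar\Omega)$ in $H(\Omega;\Delta)$ (for the graph norm) then yields the continuous extensions.

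For the second assertion, the hypothesis $\tr_0 u\in H^{3/2}(\partial\Omega)$ lets me pick $v\in H^2(\Omega)$ with $\tr_0 v=\tr_0 u$ and $\|v\|_{H^2(\Omega)}\le C\|\tr_0 u\|_{H^{3/2}(\partial\Omega)}$. Then $w=u-v\in H(\Omega;\Delta)$ solves the homogeneous Dirichlet problem $\Delta w=\Delta u-\Delta v=:g\in L^2(\Omega)$, $\tr_0 w=0$. Elliptic regularity for the Dirichlet Laplacian (with $\partial\Omega\in\mathcal C^2$) gives $w\in H^2(\Omega)$ with $\|w\|_{H^2(\Omega)}\le C\|g\|_{L^2(\Omega)}\le C(\|\Delta u\|_{L^2}+\|v\|_{H^2})\le C(\|u\|_{H(\Omega;\Delta)}+\|\tr_0 u\|_{H^{3/2}})$. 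Hence $u=v+w\in H^2(\Omega)$ with the stated bound on $\|u\|_{H^2(\Omega)}$, and $\tr_1 u=\partial_\nu u|_{\partial\Omega}\in H^{1/2}(\partial\Omega)$ is now the genuine (strong) trace of an $H^2$ function, satisfying $\|\tr_1 u\|_{H^{1/2}(\partial\Omega)}\le C\|u\|_{H^2(\Omega)}$ by the standard trace theorem. Combining the two inequalities gives the claimed estimate.

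The only delicate point is the density of $\mathcal C^\infty(\bar\Omega)$ in $H(\Omega;\Delta)$ equipped with the graph norm $\|u\|_{L^2}+\|\Delta u\|_{L^2}$; this is where the $\mathcal C^2$ regularity of $\partial\Omega$ is used (to decompose, localize, and mollify after flattening the boundary), and it is what legitimizes extending Green's identity from smooth functions. Everything else is bookkeeping with lifts and elliptic regularity. Since the lemma is quoted from \cite{BU}, I would keep this argument brief and cite \cite{BU} (and a standard reference for $H(\Omega;\Delta)$ traces) for the details.
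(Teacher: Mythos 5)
The paper does not actually prove this lemma (it is quoted from \cite{BU}); what it records are the defining formulas \eqref{tr0}--\eqref{tr1}, and these expose the one genuine gap in your plan. Your treatment of $\tr_0$ --- ``pairing against $H^{1/2}$ data lifted to $H^1$, integrating by parts once'' --- does not work on $H(\Omega;\Delta)$: a single integration by parts throws one derivative onto $u$, so it requires $\nabla u\in L^2(\Omega)$, and elements of $H(\Omega;\Delta)$ (e.g.\ $L^2$ harmonic functions with rough boundary values) need not lie in $H^1(\Omega)$. You spotted exactly this obstruction when setting up $\tr_1$ and repaired it there by passing to $H^2$ lifts and two integrations by parts, but then reverted to the broken scheme for $\tr_0$; since your definition of $\tr_1$ presupposes that ``$\tr_0$ is already controlled,'' the gap propagates to both traces. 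The correct device, which is what \eqref{tr0} encodes, is to test $\tr_0 u$ against $\omega\in H^{1/2}(\partial\Omega)$ realized as the \emph{Neumann} trace of an $H^2(\Omega)$ function $v$ with $v|_{\partial\Omega}=0$, $\partial_\nu v|_{\partial\Omega}=\omega$ and $\|v\|_{H^2}\lesssim\|\omega\|_{H^{1/2}}$; then the second Green identity
\begin{equation*}
\duality{\tr_0 u}{\omega}=\int_\Omega\big(u\,\Delta\bar v-\Delta u\,\bar v\big)\,\dd x
\end{equation*}
involves only $u$ and $\Delta u$ in the interior, giving $\tr_0:H(\Omega;\Delta)\to H^{-1/2}(\partial\Omega)$ directly; the companion choice $v|_{\partial\Omega}=\omega$, $\partial_\nu v|_{\partial\Omega}=0$ with $\omega\in H^{3/2}$ gives $\tr_1$ as in \eqref{tr1}, without having to subtract a $\tr_0$ term as you do (your variant is fine once $\tr_0$ is fixed).

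Two smaller remarks. First, the density of $\mathcal{C}^\infty(\overline{\Omega})$ in the graph norm, which you correctly flag, is what justifies that the duality formulas extend the classical traces; alternatively one can take the formulas themselves as the definition and check independence of the lift. Second, in the $H^{3/2}\Rightarrow H^2$ step, after writing $w=u-v$ with $\Delta w=g\in L^2$ and $\tr_0 w=0$ in the generalized sense, you should not invoke elliptic regularity before identifying $w$ with the variational $H^1_0$ solution of $\Delta \tilde w=g$: the identification follows by applying the generalized Green formula to the harmonic difference $h=w-\tilde w$ against solutions $z\in H^2\cap H^1_0$ of $\Delta z=g'$ for arbitrary $g'\in L^2$, which forces $h=0$. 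With these repairs your argument matches the scheme of \cite{BU} that the paper quotes.
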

The proof can  be found in \cite{BU}. Let us  remark that the definition of the extended trace maps is based on Green's formulae 
for  smooth functions. Consider $u\in H(\Omega; \Delta)$, then on one hand,
for $\omega \in H^{1/2}(\partial \Omega)$, we have
\begin{equation}\label{tr0}
\tr_0 u(\omega)= \int_{  \Omega}(u\Delta \bar v- \Delta u \bar v) \, \dd x
\end {equation}
where  $v\in H^2(\Omega)$ is  the extension
\begin{equation}\label{test1}
v|_{\partial \Omega}=0  \, ,\partial_\nu v|_{\partial \Omega}=\omega.
\end{equation}
On the other hand, for $\omega \in H^{3/2}(\partial \Omega)$, we have
\begin{equation}
\label{tr1}
\tr_1 u(\omega)= \int_{  \Omega}(u\Delta \bar v- \Delta u \bar v) \, \dd x
\end {equation}
where  $v\in H^2(\Omega)$ is the extension
\begin{equation}\label{test2}
v|_{\partial \Omega}=\omega  \, ,\partial_\nu v|_{\partial \Omega}=0.
\end{equation}
The generalized  Green's formula reads as follows.
\begin{lem}
For $u \in H(\Omega; \Delta)$ and $v\in H^2(\Omega)$, we have
\begin{equation}
\int_\Omega(\Delta-q)u\bar v \, \dd x=\int_\Omega u \overline{(\Delta-\bar q)v}  \, \dd x + \duality{\tr_1u}{\overline{\tr_0v} }- \duality{\tr_0u}{\overline{\tr_1v}}.
\end {equation}
\end{lem}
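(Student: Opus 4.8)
The plan is first to reduce the claimed identity to the classical Green's formula without the potential, and then to establish that by density. Since $q\in L^\infty(\Omega)$ and $u,v\in L^2(\Omega)$, the product $q u\bar v$ is integrable and $\int_\Omega q u\bar v\,\dd x=\int_\Omega u\,\overline{\bar q v}\,\dd x$ trivially; hence the zeroth order contributions on the two sides of the asserted formula cancel, and it suffices to prove
\[
\int_\Omega \Delta u\,\bar v\,\dd x-\int_\Omega u\,\Delta\bar v\,\dd x=\duality{\tr_1u}{\overline{\tr_0v}}-\duality{\tr_0u}{\overline{\tr_1v}}
\]
for every $u\in H(\Omega;\Delta)$ and every $v\in H^2(\Omega)$.

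Both sides of this identity are sesquilinear forms in $(u,v)$ which are continuous on $H(\Omega;\Delta)\times H^2(\Omega)$: the volume terms are bounded by $\|u\|_{H(\Omega;\Delta)}\|v\|_{H^2}$, while the boundary pairings are bounded by Lemma~\ref{BU1} (continuity of $\tr_0,\tr_1$ from $H(\Omega;\Delta)$ into $H^{-1/2}$ and $H^{-3/2}$) together with the classical continuity of the Cauchy trace $v\mapsto(\tr_0v,\tr_1v)$ from $H^2(\Omega)$ into $H^{3/2}\times H^{1/2}$. On the dense subspace $\mathcal{C}^\infty(\bar\Omega)\times\mathcal{C}^\infty(\bar\Omega)$ the identity is just the textbook Green formula, obtained by integrating by parts twice. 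Since $\mathcal{C}^\infty(\bar\Omega)$ is dense both in $H^2(\Omega)$ and in $H(\Omega;\Delta)$ for the graph norm $\|u\|_{L^2}+\|\Delta u\|_{L^2}$ (a classical fact, which is precisely what underlies the trace statement in Lemma~\ref{BU1}), the two continuous forms coincide everywhere. Re-inserting the cancelling $q$-terms yields the statement.

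If one prefers to avoid invoking density of $\mathcal{C}^\infty(\bar\Omega)$ in $H(\Omega;\Delta)$, the same conclusion can be reached directly from the defining formulae \eqref{tr0}--\eqref{tr1}: using surjectivity of the $H^2$-trace map, choose $v_1\in H^2(\Omega)$ with $\tr_0v_1=\tr_0v$, $\tr_1v_1=0$ and $v_0\in H^2(\Omega)$ with $\tr_0v_0=0$, $\tr_1v_0=\tr_1v$, and write $v=v_1+v_0+w$ with $w:=v-v_1-v_0\in H^2_0(\Omega)$. Splitting $\int_\Omega(\Delta u\,\bar v-u\,\Delta\bar v)$ accordingly, the $v_1$- and $v_0$-pieces are exactly the integrals appearing in \eqref{tr1} and \eqref{tr0}, hence reproduce $\pm\duality{\tr_1u}{\overline{\tr_0v}}$ and $\mp\duality{\tr_0u}{\overline{\tr_1v}}$; the $w$-piece vanishes because, approximating $w$ by $w_k\in\mathcal{C}^\infty_0(\Omega)$ in $H^2$, the very definition of the weak Laplacian gives $\int_\Omega\Delta u\,\overline{w_k}=\int_\Omega u\,\overline{\Delta w_k}$, and one passes to the limit using $u,\Delta u\in L^2(\Omega)$ and $w_k\to w$, $\Delta w_k\to\Delta w$ in $L^2(\Omega)$.

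There is no serious analytic difficulty here; the only points requiring care are the density statement for $H(\Omega;\Delta)$ (first route) or the sign/conjugation bookkeeping when matching the split integrals to the dual pairings prescribed by \eqref{tr0}--\eqref{tr1} (second route). Accordingly I expect the write-up to amount to essentially a single paragraph built around one of these two observations.
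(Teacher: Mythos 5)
Your first argument (cancel the $q$-terms, verify the $q$-free Green identity on $\mathcal{C}^\infty(\bar\Omega)\times\mathcal{C}^\infty(\bar\Omega)$, then conclude by continuity of both sesquilinear forms on $H(\Omega;\Delta)\times H^2(\Omega)$ together with density of $\mathcal{C}^\infty(\bar\Omega)$ in the graph norm) is correct and is precisely the standard argument the paper implicitly relies on: the lemma is stated there without proof, as a consequence of the extended traces of Lemma~\ref{BU1} defined through Green's formula, with details deferred to \cite{BU} and \cite{NS}. One caveat on your second route: as you yourself flag, the sign bookkeeping against \eqref{tr0}--\eqref{tr1} is delicate (as written, \eqref{tr1} yields $-\int_{\partial\Omega}\partial_\nu u\,\bar\omega\,\dd A$ for smooth $u$, so matching the stated pairing requires fixing that convention), which makes the density route the cleaner one to write up.
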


\subsection{The Dirichlet-to-Neuman map}

The next step is to define the Dirichlet-to-Neumann map  associated  to the Schr\"odinger equation $-\Delta + q$. To achieve  a definition in a extended domain that contains the traces of the solutions $u \in H(\Omega; \Delta)$ of the   equation 
$(-\Delta + q)u=0$, we will need the following lemma (see \cite{NS}).

We will denote $\mathcal{H}(\partial \Omega)$ the range  of the map $$\tr_0:   H(\Omega; \Delta)\to H^{-1/2}(\partial \Omega),$$
and also consider  the space of solutions of Schr\"odinger equation 
$$b_q:=\{ u\in L^2(\Omega): (-\Delta + q)u=0\} \subset   H(\Omega; \Delta).$$
Then  we have 
\begin{lem} [Nachman and Street \cite{NS}]\label{nast3}If $q \in L^{\infty}(\Omega)$ and $0$ is not  a Dirichlet  eigenvalue  of $-\Delta + q$ in $\Omega$ then the trace  map
$$\tr_0: b_q \to  \mathcal{H}(\partial \Omega)$$ is one to one  and onto.
\end{lem}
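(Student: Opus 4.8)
The plan is to prove Lemma \ref{nast3} --- that $\tr_0 : b_q \to \mathcal{H}(\partial\Omega)$ is a bijection --- by separately establishing injectivity and surjectivity, both relying on the non-eigenvalue hypothesis together with Lemma \ref{BU1} and the generalized Green's formula stated above.

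\emph{Injectivity.} Suppose $u \in b_q$ with $\tr_0 u = 0$. Then $u$ solves $(-\Delta+q)u=0$ in $\Omega$ with vanishing Dirichlet trace (as an element of $H^{-1/2}(\partial\Omega)$), so $u$ is a weak solution of the homogeneous Dirichlet problem. Since $0$ is not a Dirichlet eigenvalue of $-\Delta+q$, the only such solution is $u\equiv 0$. The only subtlety is that $u$ lives a priori only in $H(\Omega;\Delta) = L^2(\Omega)$ with $\Delta u \in L^2$, so one must check that ``$\tr_0 u = 0$'' genuinely forces $u$ into $H^1_0(\Omega)$ (or at least into the form domain) before invoking the eigenvalue hypothesis; this is a standard regularity/density argument using the trace characterization \eqref{tr0} and the definition of $\mathcal{H}(\partial\Omega)$, and should be routine given Lemma \ref{BU1}.

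\emph{Surjectivity.} Given $f \in \mathcal{H}(\partial\Omega)$, by definition there is some $w \in H(\Omega;\Delta)$ with $\tr_0 w = f$ (with no equation imposed on $w$). Set $g := (-\Delta)w \in L^2(\Omega)$, or rather work with $(-\Delta+q)w \in L^2(\Omega)$. We seek $u \in b_q$ with $\tr_0 u = f$; writing $u = w - v$, this amounts to solving the inhomogeneous Dirichlet problem $(-\Delta+q)v = (-\Delta+q)w$ in $\Omega$ with $\tr_0 v = 0$, i.e. $v \in H^1_0(\Omega)$. Because $0$ is not a Dirichlet eigenvalue of $-\Delta+q$, the operator $-\Delta+q : H^1_0(\Omega) \cap H^2 \to L^2(\Omega)$ (or via the Lax--Milgram / Fredholm framework on $H^1_0$) is invertible, so a unique such $v$ exists; then $u = w - v \in L^2(\Omega)$ satisfies $(-\Delta+q)u = 0$, hence $u \in b_q \subset H(\Omega;\Delta)$, and $\tr_0 u = \tr_0 w - \tr_0 v = f - 0 = f$. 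Here one uses that the extended trace map $\tr_0$ of Lemma \ref{BU1} agrees with the ordinary trace on $H^2(\Omega)$ and is additive, so that $\tr_0 v = 0$ for $v \in H^1_0(\Omega)$ is consistent with the $H(\Omega;\Delta)$-trace.

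\emph{Main obstacle.} The delicate point is not the abstract Fredholm/eigenvalue argument but reconciling the \emph{two notions of trace} in play: the classical $H^{1/2}$-trace for $H^1$-functions (used for the homogeneous Dirichlet problem) and the extended distributional trace $\tr_0 : H(\Omega;\Delta) \to H^{-1/2}(\partial\Omega)$ from Lemma \ref{BU1}, whose range is precisely $\mathcal{H}(\partial\Omega)$. One must verify that these are compatible on overlaps --- in particular that a solution of $(-\Delta+q)u=0$ with $\tr_0 u$ (extended sense) lying in $H^{1/2}$ has the expected $H^1$-regularity up to the boundary, and conversely --- so that the eigenvalue hypothesis can legitimately be applied. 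This compatibility is essentially contained in Lemma \ref{BU1} and the Green's identities \eqref{tr0}--\eqref{tr1}, and invoking \cite{NS} for the precise statement is the cleanest route; the rest is bookkeeping.
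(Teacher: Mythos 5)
Your argument is correct; note that the paper gives no proof of this lemma at all, but simply cites \cite{NS}, so the only comparison to make is with the standard argument, which is exactly what you reconstruct. The one step you leave as ``routine'' (injectivity) is covered precisely by the second assertion of Lemma \ref{BU1}: if $\tr_0 u=0$ then in particular $\tr_0 u\in H^{3/2}(\partial\Omega)$, hence $u\in H^2(\Omega)$, the extended trace agrees with the classical one, so $u\in H^2(\Omega)\cap H^1_0(\Omega)$ and the non-eigenvalue hypothesis forces $u=0$. Likewise, in the surjectivity step the compatibility $\tr_0 v=0$ for the corrector $v$ is immediate once you observe that $-\Delta v=(-\Delta+q)w-qv\in L^2(\Omega)$ with homogeneous Dirichlet data on a smooth boundary gives $v\in H^2(\Omega)$ by elliptic regularity, and $\tr_0$ extends the classical trace on $H^2(\Omega)$; with these two pointers made explicit your proof is complete.
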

We consider the inverse maps 
$$P_q= \tr_0^{-1}:   \mathcal{H}(\partial \Omega)\to b_q$$
and  define the norm  of  $\mathcal{H}(\partial \Omega)$  as 
$$\|\phi\|_{\mathcal{H}(\partial \Omega)}:= \|P_0\phi\|_{L^2(\Omega)}$$
Then  we have:
\begin{lem}[Nachman and Street \cite{NS}]\label{nast1} The map  $\tr_0: H(\Omega; \Delta)\to \mathcal{H}(\partial \Omega)$ is continuous and,
under the hypothesis  of the  previous lemma, the map  $$\tr_0: b_q\to \mathcal{H}(\partial \Omega)$$ is a homeomorphism.
\end{lem}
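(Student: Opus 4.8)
\textbf{Proof proposal for Lemma \ref{nast1}.}

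The plan is to deduce both assertions from the closed graph theorem together with Lemma \ref{nast3}. First I would observe that the continuity of $\tr_0 : H(\Omega;\Delta) \to \mathcal H(\partial\Omega)$ is essentially a definition-unwinding exercise: by definition $\|\phi\|_{\mathcal H(\partial\Omega)} = \|P_0\phi\|_{L^2(\Omega)}$, so for $u \in H(\Omega;\Delta)$ we must bound $\|P_0(\tr_0 u)\|_{L^2(\Omega)}$ by $\|u\|_{H(\Omega;\Delta)}$. Writing $w = u - P_0(\tr_0 u)$, we have $\tr_0 w = 0$ and $w \in H(\Omega;\Delta)$, and $P_0(\tr_0 u) = -\Delta^{-1}(\Delta w) + \dots$; more cleanly, $v := P_0(\tr_0 u) \in b_0$ solves $\Delta v = 0$ with $\tr_0 v = \tr_0 u$, so $u - v \in H(\Omega;\Delta)$ has zero trace, hence lies in the domain of the Dirichlet Laplacian, and $\Delta(u-v) = \Delta u$; since $0$ is not a Dirichlet eigenvalue of $-\Delta$, elliptic estimates give $\|u - v\|_{L^2} \leq C\|\Delta u\|_{L^2}$, whence $\|v\|_{L^2} \leq \|u\|_{L^2} + C\|\Delta u\|_{L^2} \leq C'\|u\|_{H(\Omega;\Delta)}$. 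This yields the first statement.

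Next, for the homeomorphism claim under the hypotheses of Lemma \ref{nast3}, I would note that Lemma \ref{nast3} already gives that $\tr_0 : b_q \to \mathcal H(\partial\Omega)$ is a bijection, so it remains only to show that it and its inverse $P_q$ are continuous. Continuity of $\tr_0$ on $b_q$ is inherited from continuity on $H(\Omega;\Delta)$ just proved. For the inverse, the natural route is the open mapping / closed graph theorem: $\mathcal H(\partial\Omega)$ with the norm $\|\cdot\|_{\mathcal H(\partial\Omega)}$ is a Banach space (being isometric, via $P_0$, to a closed subspace of $L^2(\Omega)$ — one checks $b_0$ is closed in $L^2(\Omega)$ because $\Delta$ is a closed operator), and $b_q$ is a closed subspace of $L^2(\Omega)$ for the same reason, hence a Banach space. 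Then $\tr_0 : b_q \to \mathcal H(\partial\Omega)$ is a continuous bijection between Banach spaces, so by the open mapping theorem $P_q = \tr_0^{-1}$ is automatically continuous, giving the homeomorphism.

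The main obstacle I anticipate is the verification that the two spaces in question are genuinely Banach — i.e.\ that $b_q$ and $b_0$ are \emph{closed} subspaces of $L^2(\Omega)$ — since the whole open-mapping argument rests on this. This follows from the fact that if $u_k \to u$ in $L^2(\Omega)$ with $(-\Delta+q)u_k = 0$, then $\Delta u_k = q u_k \to q u$ in $L^2(\Omega)$ (using $q \in L^\infty$), so $u_k \to u$ in $H(\Omega;\Delta)$, and since $-\Delta + q$ is continuous from $H(\Omega;\Delta)$ to $L^2(\Omega)$ (in the distributional sense) the limit satisfies $(-\Delta+q)u = 0$; thus $b_q$ is closed, and $b_0$ is the special case $q = 0$. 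One should also double-check that the norm $\|\phi\|_{\mathcal H(\partial\Omega)} := \|P_0\phi\|_{L^2(\Omega)}$ makes $\mathcal H(\partial\Omega)$ complete, which is immediate once $b_0$ is known to be closed, since $P_0$ is then an isometric isomorphism of $\mathcal H(\partial\Omega)$ onto the Banach space $b_0$. With these points in place the remaining steps are routine applications of standard functional-analytic theorems.
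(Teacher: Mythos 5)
Your argument is correct, and it is worth noting that the paper itself offers no proof of this lemma --- it simply refers the reader to Nachman--Street \cite{NS} --- so your proposal in effect supplies the standard functional-analytic argument behind the citation. Both halves check out: for continuity, setting $v=P_0(\tr_0 u)$ and observing that $u-v$ has zero generalized trace, hence (by the elliptic regularity statement in Lemma \ref{BU1}) lies in $H^2\cap H^1_0$, the domain of the Dirichlet Laplacian, gives $\|u-v\|_{L^2}\leq C\|\Delta u\|_{L^2}$ and thus $\|\tr_0 u\|_{\mathcal H(\partial\Omega)}\leq C\|u\|_{H(\Omega;\Delta)}$; for the homeomorphism, bijectivity is exactly Lemma \ref{nast3}, and the open mapping theorem applies because you correctly verify that $b_q$ and $b_0$ are closed in $L^2(\Omega)$ (distributional convergence of $\Delta u_k=qu_k$) and that $\mathcal H(\partial\Omega)$ is Banach, being isometric to $b_0$ via $P_0$ by the very definition of its norm. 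Two small points you use implicitly and could state: on $b_q$ the $L^2(\Omega)$ and $H(\Omega;\Delta)$ norms are equivalent since $\|\Delta u\|_{L^2}=\|qu\|_{L^2}\leq\|q\|_{L^\infty}\|u\|_{L^2}$, so the topology on $b_q$ is unambiguous and continuity of $\tr_0$ on $b_q$ really is inherited from the first part; and the step ``zero extended trace implies membership in the domain of the Dirichlet Laplacian'' is precisely where Lemma \ref{BU1} (or standard elliptic regularity) is needed, so it should be cited rather than treated as automatic.
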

One can define   the Dirichlet-to-Neumann map 
\begin{equation}\Lambda_q: \mathcal{H}(\partial \Omega)\to H^{-3/2}(\partial \Omega)
\end{equation} 
as the map     $$\Lambda_q(\phi)  =  \tr_1(P_q(\phi)).$$
To be more precise, by (\ref{tr1}) for $\phi\in\mathcal{H}(\partial \Omega)$ and $\psi \in H^{3/2}(\partial \Omega)$,  we have
\begin{equation}\label{Dirichlet-to-Neumann}
\duality{\Lambda_q\phi}{\psi}=\int_\Omega(P_q\phi\Delta \bar v-qP_q\phi\bar v) \, \dd x,
\end{equation}
where $v$ is the extension in (\ref{test2}). It would be desirable to construct  the Dirichlet-to-Neumann  map  as  a selfdual  operator,  unfortunately this can not be  achieved, instead we have:

\begin{lem} [Nachman and Street \cite{NS}]\label{nast2}
Let $q_j$ , $j=1,2$ be $L^{\infty}$ potentials so that  $0$ is not  a Dirichlet  eigenvalue  of $-\Delta + q_j$ in $\Omega$.  Then 
$\Lambda_{q_2}- \Lambda_{q_1}$  extends to a continuous  map
$\mathcal{H}(\partial \Omega)\to\mathcal{H}(\partial \Omega)^*$.
\end{lem}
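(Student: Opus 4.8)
The plan is to reduce the claim to an Alessandrini-type identity which represents $\Lambda_{q_2}-\Lambda_{q_1}$ as an integral of $q_2-q_1$ against a product of two $L^2(\Omega)$ solutions, and then to bound that integral by the $\mathcal{H}(\partial\Omega)$-norms of its arguments. Note that, because $\tr_1$ only sends $H(\Omega;\Delta)$ into $H^{-3/2}(\partial\Omega)$, the pairing $\duality{(\Lambda_{q_2}-\Lambda_{q_1})\phi}{\psi}$ is a priori meaningful only for $\phi\in\mathcal{H}(\partial\Omega)$ and $\psi\in H^{3/2}(\partial\Omega)$; the content of the lemma is exactly that this bilinear form is bounded for the $\mathcal{H}(\partial\Omega)$-norm in \emph{both} variables and so extends continuously to $\mathcal{H}(\partial\Omega)\times\mathcal{H}(\partial\Omega)$.

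Before the main computation I would record two elementary facts. First, the Dirichlet eigenvalues of $-\Delta+\bar q_2$ are the complex conjugates of those of $-\Delta+q_2$, so the hypothesis on $q_2$ guarantees that $0$ is not a Dirichlet eigenvalue of $-\Delta+\bar q_2$ either; hence Lemmas \ref{nast3} and \ref{nast1} apply to $\bar q_2$ as well, and $P_{q_1}\colon\mathcal{H}(\partial\Omega)\to b_{q_1}$ and $P_{\bar q_2}\colon\mathcal{H}(\partial\Omega)\to b_{\bar q_2}$ are bounded into $L^2(\Omega)$. Second, $\mathcal{H}(\partial\Omega)$ is stable under complex conjugation and $\norm{\bar\psi}{}{\mathcal{H}(\partial\Omega)}=\norm{\psi}{}{\mathcal{H}(\partial\Omega)}$, since $P_0\bar\psi=\overline{P_0\psi}$.

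The core step is the identity itself. Fix $\phi\in\mathcal{H}(\partial\Omega)$ and $\psi\in H^{3/2}(\partial\Omega)$ and set $u_1=P_{q_1}\phi$, $u_2=P_{q_2}\phi$, $w=P_{\bar q_2}\bar\psi$; by Lemma \ref{BU1} one has $w\in H^2(\Omega)$ because $\tr_0 w=\bar\psi\in H^{3/2}(\partial\Omega)$. Applying the generalized Green's formula stated above to the pair $(u_1,w)$ with potential $q_1$, and using $(\Delta-q_1)u_1=0$ together with $\Delta w=\bar q_2 w$, gives
\[ \duality{\Lambda_{q_1}\phi}{\psi}=\duality{\phi}{\overline{\tr_1 w}}-\int_\Omega(q_2-q_1)\,u_1\,\bar w\,\dd x, \]
whereas applying the same formula to $(u_2,w)$ with potential $q_2$ makes the volume term vanish and leaves $\duality{\Lambda_{q_2}\phi}{\psi}=\duality{\phi}{\overline{\tr_1 w}}$. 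Subtracting these two relations yields
\[ \duality{(\Lambda_{q_2}-\Lambda_{q_1})\phi}{\psi}=\int_\Omega(q_2-q_1)\,(P_{q_1}\phi)\,\overline{(P_{\bar q_2}\bar\psi)}\,\dd x. \]
(Incidentally, the first relation also shows $\duality{\Lambda_{q}\phi}{\psi}=\duality{\phi}{\overline{\Lambda_{\bar q}\bar\psi}}$, which is the precise reason one cannot make $\Lambda_q$ selfdual.)

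To finish, the right-hand side of the last display is defined for \emph{all} $\phi,\psi\in\mathcal{H}(\partial\Omega)$, and it is bounded by $\norm{q_1-q_2}{}{L^\infty(\Omega)}\,\norm{P_{q_1}\phi}{}{L^2(\Omega)}\,\norm{P_{\bar q_2}\bar\psi}{}{L^2(\Omega)}$, which by the two facts above is $\le C\,\norm{\phi}{}{\mathcal{H}(\partial\Omega)}\,\norm{\psi}{}{\mathcal{H}(\partial\Omega)}$; hence the bilinear form extends continuously to $\mathcal{H}(\partial\Omega)\times\mathcal{H}(\partial\Omega)$, which is the assertion that $\Lambda_{q_2}-\Lambda_{q_1}$ extends to a continuous map $\mathcal{H}(\partial\Omega)\to\mathcal{H}(\partial\Omega)^*$ (uniqueness of the extension following from the density of $H^{3/2}(\partial\Omega)$ in $\mathcal{H}(\partial\Omega)$). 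I expect the only delicate point to be the bookkeeping in the generalized Green's formula — tracking the complex conjugations and which potential sits on which side, and realising that it is the replacement of the harmonic-type test extension $v$ of \eqref{Dirichlet-to-Neumann} by the adjoint solution $w=P_{\bar q_2}\bar\psi$ that isolates the single volume term; this replacement forces $\psi\in H^{3/2}(\partial\Omega)$ (so that $w\in H^2$), and hence explains why the density remark is needed at the very end rather than being able to argue for all $\psi\in\mathcal{H}(\partial\Omega)$ at once.
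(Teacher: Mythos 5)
Your argument is correct, and it is essentially the argument the paper relies on: the paper gives no proof of Lemma \ref{nast2} (it defers to \cite{NS}), but immediately afterwards it records the identity \eqref{basic1}, which is exactly the Alessandrini-type identity you derive, and boundedness then follows, as you say, from Lemma \ref{nast1} (applied to $q_1$ and, via conjugation, to $\bar q_2$) together with the $L^\infty$ bound on $q_1-q_2$. One cosmetic remark: with the generalized Green formula as stated in the paper, your sign $\int_\Omega (q_2-q_1)\,P_{q_1}\phi\,P_{q_2}\psi\,\dd x$ is the consistent one (it agrees with \eqref{eq:SCHin-out}), whereas \eqref{basic1} as printed carries $(q_1-q_2)$ — a conventions/typo discrepancy that does not affect the norm estimate. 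Note also that your closing density claim ($H^{3/2}(\partial\Omega)$ dense in $\mathcal{H}(\partial\Omega)$) is used only for uniqueness of the extension, which the statement does not require, so nothing in the proof hinges on it.
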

These lemmas  can be found in \cite{NS}.
Let us remark that we have, if $\phi, \psi \in \mathcal{H}(\partial \Omega)$,
\begin{equation}\label{basic1}
 \duality{(\Lambda_{q_2}- \Lambda_{q_1}) (\phi)}{ \psi}= \int_\Omega P_{q_1}(\phi)(q_1-q_2)P_{q_2} (\psi) \, \dd x.
\end{equation}
This  formula is  the starting point of the recovery of values of the error in the interior of the domain.
Notice that
$$\|\Lambda_{q_1}- \Lambda_{q_2}\|_{\mathcal{H}(\partial \Omega)\to\mathcal{H}(\partial \Omega)^*}\geq \|\Lambda_{q_1}- \Lambda_{q_2}\|_{H^{1/2}(\partial \Omega) \to H^{-1/2}(\partial \Omega)}. $$
%We also need the following
%\begin{lem} Let us consider  $ \phi_1, \phi_2 \in \mathcal{H}(\partial \Omega)$. Let $v_1=P_{q_1}(\phi_1)$ and $w_2=P_{q_2}(\phi_1)$. Then we can write
%\begin{equation}\label{basic2} \duality{(\Lambda_{q_1}- \Lambda_{q_2})\phi_1}{\phi_2}=
% \int_{\partial\Omega}\partial_\nu(v_1-w_2)\phi_2\,\dd A,
% \end{equation}
%where the last integral is understood as  a dual pairing $\duality{H^{1/2}(\partial\Omega)}{H^{-1/2}(\partial\Omega)}$.
 %\end{lem}
%\begin{proof}  Let us assume   a priori  that $\phi_2\in H^{3/2}(\partial \Omega)$. By taking the extension $v\in H^2(\Omega)$ with  $\tr_0 v=\phi_2$ and $\tr_1v=0$.
 %We have
 % \[  \duality{(\Lambda_{q_1}- \Lambda_{q_2})\phi_1}{\phi_2}=\int_\Omega\Delta(v_1-w_2)\bar v - \int_\Omega\Delta \bar v(v_1-w_2)\]
 %Notice that by Lemma \ref{BU1}, $v_1-w_2\in H^2(\Omega)$ since its trace is zero, and hence $\tr_1(v_1-w_2)\in H^{1/2}(\partial \Omega)$. Then by Green formula   
%\[=\int_{\partial\Omega}\partial_\nu(v_1-w_2)\phi_2 -\duality{\tr_0(v_1-w_2)}{\tr_1 v}, \] and the last term vanishes.
%\end{proof}

\subsection{The partial data map.} Given  $N\subset \Sph^{n-1}$  open (which could be very small) and $\xi \in N$, We assume $F\subset\d\Omega$  to be  a neighborhood of $\partial  \Omega_-(\xi) $
for any $\xi \in N $ and $B$ a neighborhood of $\partial  \Omega_+(\xi)$ for any $\xi \in N $.

Given two potentials satisfying the hypothesis of Lemma \ref{nast2},  we will consider   the difference of their partial data  measurements as
\begin{equation}\label{Dirichlet-to-Neumann2}
 \|\Lambda_{q_1}-\Lambda_{q_2}\|^*_{B\to F}:=\\  
\sup\Big\{\Big|\duality{(\Lambda_{q_1}- \Lambda_{q_2})\phi_B}{\phi_F}\Big|\Big\},
\end{equation}
where $\sup $ is taken over  the  set  
\[\big\{(\phi_B,\psi_F):\|\phi_B\|_{ \mathcal{H}(\partial \Omega)}= \|\psi_B\|_{ \mathcal{H}(\partial \Omega)}=1, \phi_B \in  \mathcal {E }'(B) \text{ and }
\psi_F\in   \mathcal {E }'(F)\big\}.\]

\subsection{Fadeev's special solutions}
We collect   and remark  results in \cite{BU} and \cite{NS}, concerning the existence and  a priori bounds of $H( \Omega,\Delta)$ solutions  of the Sch\"rodinger equation adapted  to the support requirements of the partial data.

The modifications of \cite{NS} were done in the  context of partial data considered in \cite{KSU}, but the construction can be adapted to the case of \cite{BU}, the only point is to write the operator conjugated with the  exponential  weight  in the appropriate coordinates so that it is given as perturbations of the laplacean with terms of the complex operators $\d_z$ and $\d_{\bar z}$. The final output of this construction is as  follows.

 We consider $q$ is as in Lemma \ref{nast3} , $\xi$  and $\zeta$ unit orthogonal vectors so that $\xi\in N$. 
  We write $x=(x_1,x_2, x'')$ with respect to  an orthonormal  basis $\{e_1,...,e_n\} $ so that $e_1=\xi$ and $e_2=\zeta$ and $x''\in \R^{n-2}$.

\begin{thm} \label{NSsolution}

For $\tau \geq 1$ sufficiently large and $g\in C^\infty(\R^{n-2})$, and $B$ and $F$ as in the  statement of Theorem \ref{BUstability},
there exists a unique solution $w_\tau \in H(\Omega; \Delta)$ of the  equation $(-\Delta + q)w_\tau =0$ in $\Omega$, such that $\tr_0w_\tau  \in\mathcal{H}(\partial \Omega)\cap \mathcal {E }'(B) $ and which can be written as
 \[w_\tau(x)=\e^{\tau\langle \xi+ i\zeta, x \rangle}(g(x'')+ R(\tau, x))\]
 where \[\|R(\tau,\cdot)\|_{L^2(\Omega)}\leq C \frac1 \tau(\| qg\|_{L^2(\Omega)} +  \tau^{1/2}\|\Delta g\|_{L^2(\Omega)}).
  \]
The same is true changing $\tau$ by $-\tau$  and $B$ by $F$.

\end{thm}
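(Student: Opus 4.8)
The plan is to construct the special solutions $w_\tau$ by a standard Faddeev-type argument, conjugating the Schr\"odinger operator by the exponential weight $\e^{\tau\langle\xi+i\zeta,x\rangle}$ and absorbing the remainder via an a priori estimate for the conjugated operator, while simultaneously tracking the support condition $\tr_0 w_\tau\in\mathcal E'(B)$. First I would perform the conjugation: writing $w_\tau=\e^{\tau\langle\xi+i\zeta,x\rangle}r$, the equation $(-\Delta+q)w_\tau=0$ becomes $(-\Delta - 2\tau\langle\xi+i\zeta,\nabla\rangle + q)r = 0$; in the coordinates $x=(x_1,x_2,x'')$ adapted to $e_1=\xi$, $e_2=\zeta$ the first-order part is $-2\tau(\d_{x_1}+i\d_{x_2})$, i.e. (up to a constant) a multiple of the Cauchy--Riemann operator $\d_{\bar\zeta}$ in the $(x_1,x_2)$ plane. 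So with the ansatz $r=g(x'')+R$ one gets $(-\Delta - 2\tau(\d_{x_1}+i\d_{x_2}))R = -qg - q R + \Delta g$, and I would solve this by a fixed point / Neumann series argument based on a solvability estimate of the form $\|R\|_{L^2}\le C\tau^{-1}\|h\|_{L^2}$ for the equation $(-\Delta-2\tau(\d_{x_1}+i\d_{x_2}))R=h$. Such an estimate is exactly the Carleman-type / Faddeev estimate used by Bukhgeim--Uhlmann and Nachman--Street; I would cite it as essentially contained in \cite{BU, NS}. Applying it with $h=-qg+\Delta g$ and treating the $-qR$ term perturbatively (absorbed by taking $\tau$ large, using $q\in L^\infty$) yields existence, uniqueness in $L^2$, and the bound $\|R(\tau,\cdot)\|_{L^2(\Omega)}\le C\tau^{-1}(\|qg\|_{L^2}+\tau^{1/2}\|\Delta g\|_{L^2})$; the extra $\tau^{1/2}$ on the $\Delta g$ term reflects that the solution operator for the $\d_{x_1}+i\d_{x_2}$ part gains only half a power of $\tau$ on that contribution (one derivative is ``spent'' differently than the zeroth-order term).

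The second ingredient is the support condition. Here I would invoke the construction of \cite{NS} in the partial-data setting of \cite{KSU}: one chooses the solvability operator for $-\Delta-2\tau(\d_{x_1}+i\d_{x_2})$ not as an arbitrary right inverse but as one whose output, after multiplying back by the exponential weight and restricting the trace to $\d\Omega$, is supported in the prescribed neighborhood $B$ of $\d\Omega_+(\xi)$ (resp. $F$ of $\d\Omega_-(\xi)$). Geometrically this works because the illuminated/shadowed decomposition for the direction $\xi$ matches the characteristic direction of the Cauchy--Riemann factor; the exponential $\e^{\tau\langle\xi,x\rangle}$ is large on the shadowed face and small on the illuminated face, which is precisely what allows one to produce solutions whose Dirichlet trace lives only on $B$. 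The assertion that $\tr_0 w_\tau\in\mathcal H(\d\Omega)$ is automatic once $w_\tau\in H(\Omega;\Delta)$ solves the homogeneous equation, by Lemma \ref{nast3} and Lemma \ref{nast1}. Finally, the last sentence of the statement (replacing $\tau$ by $-\tau$ and $B$ by $F$) is obtained by the identical argument with $\xi+i\zeta$ replaced by $-\xi+i\zeta$, which swaps the roles of the two faces.

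\textbf{Main obstacle.} The technical heart, and the step I expect to be the main obstacle, is establishing the solvability estimate for $-\Delta - 2\tau(\d_{x_1}+i\d_{x_2})$ \emph{together with} the support-localization of the right inverse --- i.e. reconciling the gain-of-$\tau$ a priori bound with the requirement that the trace of the solution be supported in $B$. In the full-data case the estimate is elementary (divide by the symbol $|\eta|^2 - 2i\tau(\eta_1+i\eta_2)$ on the Fourier side), but imposing the boundary-support constraint forces one to work with a non-translation-invariant parametrix or a cutoff construction as in \cite{NS}, and one must check that the cutoffs do not destroy the $\tau$-decay. Everything else --- the conjugation, the Neumann-series fixed point, the bookkeeping of the constants and of the $\|\Delta g\|$ term --- is routine once that estimate is in hand, and I would present it by reducing carefully to the statements already available in \cite{BU} and \cite{NS}.
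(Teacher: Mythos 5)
Your overall route is the same as the paper's: the paper gives no self-contained proof of Theorem \ref{NSsolution} but, exactly as you propose, reduces it to the constructions of \cite{BU} and \cite{NS}, the only point it emphasizes being the conjugation in coordinates adapted to $\xi,\zeta$, under which $-\Delta+q$ becomes the Laplacian perturbed by the Cauchy--Riemann factor $-2\tau(\partial_{x_1}+i\partial_{x_2})$ plus $q$; your computation of the conjugated equation and of the equation satisfied by $R$ matches this, as do your use of Lemmas \ref{nast3} and \ref{nast1} for $\tr_0 w_\tau\in\mathcal{H}(\partial\Omega)$ and the $\tau\mapsto-\tau$, $B\leftrightarrow F$ symmetry. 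The one substantive inaccuracy is your account of how the trace-support condition $\tr_0 w_\tau\in\mathcal{E}'(B)$ is achieved. It is not obtained by a Fourier-side symbol division followed by cutoffs, and it does not follow from the exponential weight being small on the illuminated face; in \cite{NS} (and as the paper signals by naming its ``key ingredients'') it comes from the boundary Carleman estimate --- the proposition recorded immediately after Theorem \ref{NSsolution}, with the $\tau$-weighted boundary integrals over $\partial\Omega_{\pm}$ --- applied to the conjugated/adjoint problem through a Hahn--Banach/duality argument exploiting orthogonality properties of the reduced-data solutions: the correction $R$ is produced as a functional on a space of test functions vanishing on the boundary, and the vanishing of the trace outside $B$ (resp.\ $F$) is enforced by controlling the boundary terms on the appropriate face with the Carleman inequality. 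This is also the source of the $\tau^{1/2}\|\Delta g\|_{L^2}$ term in the remainder bound (the boundary terms in the Carleman estimate carry a factor $\tau$, hence $\tau^{1/2}$ after taking square roots), not a half-power gain of the solution operator for the Cauchy--Riemann part --- indeed a naive full-space inverse would give the full $\tau^{-1}$ gain but would forfeit the support constraint, which is precisely the obstacle you flag. So your plan is acceptable at the level of detail the paper itself provides, but if you were to fill in the key step you would need the boundary-Carleman/duality construction of \cite{NS}, not the parametrix-plus-cutoff or weight-smallness heuristics you sketch; note also that the uniqueness assertion in the statement is part of that functional-analytic construction and is not delivered by a Neumann-series fixed point alone.
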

The key ingredients in the proof are   boundary Carleman estimates and  some orthogonality properties  of the  reduced  data solutions.

%The Carleman estimate will be used  again in the next section.
\begin{prop} Let   $q\in L^\infty(\Omega)$, there exist  $\tau_0>0$ and $C>0$  such that for all  $u\in \mathcal{C}^\infty(\overline{\Omega})$  , $u|_{\partial\Omega}=0$  and $\tau>\tau_0$
\begin{multline} C\tau^2\int_\Omega |\e^{-\tau \langle x,\xi\rangle}u|^2 \, \dd x+ \tau \int_{\partial \Omega_+}\langle x,\xi\rangle|e^{-\tau x\langle x,\xi\rangle}\partial_\nu u|^2 \, \dd A \\
\leq
\int_\Omega |\e^{-\tau \langle x,\xi\rangle}(\Delta -q)u |^2 \, \dd x- \tau \int_{\partial \Omega_-}\langle x,\xi\rangle |\e^{-\tau x\cdot \xi}\partial_\nu u|^2 \, \dd A. 
\end{multline}
\end{prop}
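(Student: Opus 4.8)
The plan is to establish this boundary Carleman estimate by the classical method of Bukhgeim and Uhlmann, treating $q$ as a lower–order perturbation and reducing everything to the flat Laplacian. \emph{First I would dispose of the potential.} Writing $\varphi(x)=\langle x,\xi\rangle$, the elementary inequality $\|a-b\|^2\geq\tfrac12\|a\|^2-\|b\|^2$ gives
\[ \int_\Omega|\e^{-\tau\varphi}(\Delta-q)u|^2\,\dd x\;\geq\;\tfrac12\int_\Omega|\e^{-\tau\varphi}\Delta u|^2\,\dd x-\|q\|_{L^\infty(\Omega)}^2\int_\Omega|\e^{-\tau\varphi}u|^2\,\dd x, \]
so it is enough to prove the estimate with $\Delta$ in place of $\Delta-q$ and some fixed constant $C_0$: once that is known, taking $\tau_0$ so large that $\|q\|_{L^\infty}^2\leq\tfrac14 C_0\tau^2$ for $\tau\geq\tau_0$ lets the perturbation term be absorbed into the volume term on the left, at the price of replacing $C_0$ by $C_0/2$.

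\emph{Next, conjugation.} Put $v=\e^{-\tau\varphi}u$; since $u\in\mathcal C^\infty(\overline\Omega)$ with $u|_{\partial\Omega}=0$ we have $v|_{\partial\Omega}=0$, its tangential gradient vanishes on $\partial\Omega$, hence $\nabla v=(\partial_\nu v)\nu$ and $\partial_\nu v=\e^{-\tau\varphi}\partial_\nu u$ there; and because $|\xi|=1$, $\Delta\varphi=0$, one has $\e^{-\tau\varphi}\Delta u=\Delta v+2\tau\,\xi\cdot\nabla v+\tau^2v=:P_\tau v$. Now split $P_\tau v=Av+Bv$ with $Av=\Delta v+\tau^2v$ (formally self-adjoint) and $Bv=2\tau\,\xi\cdot\nabla v$ (formally skew-adjoint), so that $\|P_\tau v\|^2=\|Av\|^2+\|Bv\|^2+2\re(Av,Bv)$ in $L^2(\Omega)$. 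The cross term is computed by two integrations by parts using $v|_{\partial\Omega}=0$: the piece $\re\int_\Omega\tau^2v\,\overline{2\tau\,\xi\cdot\nabla v}=\tau^3\int_{\partial\Omega}\langle\xi,\nu\rangle|v|^2\,\dd A$ vanishes, while Green's formula together with the Rellich-type identity $\re\int_\Omega\nabla v\cdot\overline{\nabla(\xi\cdot\nabla v)}=\tfrac12\int_{\partial\Omega}\langle\xi,\nu\rangle|\nabla v|^2\,\dd A$ gives $\re\int_\Omega\Delta v\,\overline{2\tau\,\xi\cdot\nabla v}=\tau\int_{\partial\Omega}\langle\xi,\nu\rangle|\partial_\nu v|^2\,\dd A$. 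Hence
\[ \|P_\tau v\|_{L^2(\Omega)}^2=\|Av\|_{L^2(\Omega)}^2+4\tau^2\|\xi\cdot\nabla v\|_{L^2(\Omega)}^2+2\tau\int_{\partial\Omega}\langle\xi,\nu\rangle|\partial_\nu v|^2\,\dd A, \]
with \emph{no interior commutator term} — the special feature of a linear (limiting Carleman) weight; here $\langle\xi,\nu\rangle|\partial_\nu v|^2=\langle\xi,\nu\rangle\,\e^{-2\tau\varphi}|\partial_\nu u|^2$, which is the surface density occurring in the statement.

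\emph{Then the volume term.} The gain $\tau^2\|v\|^2$ cannot come from $\|Av\|^2$ (this vanishes when $\tau^2$ is a Dirichlet eigenvalue), so it must be extracted from the first-order square $4\tau^2\|\xi\cdot\nabla v\|^2$: since $\Omega$ is bounded it lies in a slab $\{a<\langle x,\xi\rangle<b\}$ with $b-a\leq\operatorname{diam}\Omega$, and integrating along lines parallel to $\xi$ gives the one–dimensional Poincaré inequality $\|v\|_{L^2(\Omega)}^2\leq(b-a)^2\|\xi\cdot\nabla v\|_{L^2(\Omega)}^2$ for $v\in H^1_0(\Omega)$. Dropping $\|Av\|^2\geq0$ we obtain $\|P_\tau v\|^2\geq 4(\operatorname{diam}\Omega)^{-2}\tau^2\|v\|^2+2\tau\int_{\partial\Omega}\langle\xi,\nu\rangle|\partial_\nu v|^2\,\dd A$; writing $\partial\Omega=\partial\Omega_+(\xi)\cup\partial\Omega_-(\xi)$ and moving the integral over the illuminated face $\partial\Omega_-(\xi)$ (where $\langle\xi,\nu\rangle\leq0$) to the right-hand side yields precisely the asserted inequality for $\Delta$, after which Step~1 completes the proof; the resulting $C$ depends only on $\operatorname{diam}\Omega$ and $\|q\|_{L^\infty}$ (in particular it is uniform for $\xi$ in the fixed set $N$).

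\emph{The hard part} is exactly this last step: because the linear weight kills the interior commutator, the $\tau^2$ factor in front of $\|v\|^2$ is not produced automatically and must be recovered from $\|\xi\cdot\nabla v\|^2$ via the boundedness of $\Omega$ — this is where $C$ acquires its dependence on the domain. The other point requiring care is bookkeeping of signs, so that the "bad" boundary contribution over $\partial\Omega_-(\xi)$ lands with a favourable sign on the right. The Rellich-type identity and the two integrations by parts are routine once one remembers to use $v|_{\partial\Omega}=0$ to collapse $\nabla v$ to $(\partial_\nu v)\nu$ on the boundary.
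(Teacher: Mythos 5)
Your argument is correct and coincides with the classical proof of Bukhgeim and Uhlmann that the paper itself invokes without reproducing: conjugate by the linear weight, split the conjugated operator into its symmetric and antisymmetric parts, compute the cross term exactly (only the boundary term $2\tau\int_{\partial\Omega}\langle\xi,\nu\rangle|\partial_\nu v|^2\,\dd A$ survives for a linear weight), recover $\tau^2\|v\|^2$ from $4\tau^2\|\xi\cdot\nabla v\|^2$ by Poincar\'e along the $\xi$-direction, and absorb $q$ for $\tau$ large. Two small remarks: the boundary density you obtain, $\langle\xi,\nu(x)\rangle\,\e^{-2\tau\langle x,\xi\rangle}|\partial_\nu u|^2$, is the intended one --- the factor $\langle x,\xi\rangle$ in the paper's display is a typo (only $\langle\xi,\nu(x)\rangle$ has a sign on $\partial\Omega_\pm(\xi)$, and this matches the analogous estimate with the logarithmic weight quoted later in the paper); and your identity carries the coefficient $2\tau$ on \emph{both} boundary pieces, so splitting $\partial\Omega=\partial\Omega_+\cup\partial\Omega_-$ does not ``precisely'' give the stated inequality --- you should keep the $2\tau$ coefficients through the absorption of $q$ (i.e.\ bound $\int|\e^{-\tau\varphi}(\Delta-q)u|^2+\tau|X_-|\geq\tfrac12\bigl(\|P_\tau v\|^2+2\tau|X_-|\bigr)-\|q\|_{L^\infty}^2\|v\|^2$) and only then halve, which yields the claimed estimate with coefficient exactly $\tau$ on $\partial\Omega_+$; as written, doing the reduction to $\Delta$ first and halving there would leave you with $\tfrac{\tau}{2}$ on the shadowed face. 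This is pure bookkeeping and does not affect the validity of the approach.
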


\subsection{\bf Stability from the Dirichlet-to-Neumann map to the Radon transform.}
We will use identity  (\ref{basic1})  together with the special solutions of Theorem  \ref{NSsolution}  to prove for $q=(q_1-q_2)\mathbf{1}_\Omega$:

\begin{prop} 
\label{PropDualEst}
For any $g\in C^\infty(\R^{n-2})$ there exists  $C>0 $ which   only depends on   $\Omega$  and  the  a priori bound of $ \|q_j\|_{L^\infty}$, such that
\begin{gather*}
\sup_{\xi\in N, \zeta \in \xi^{\perp}}\bigg|\int_{ [\xi, \zeta]^\perp }g(x'')\int_{\R^2}q(x'' + t\xi+s\zeta) \,\dd t \, \dd s \, \dd x'' \bigg|\\
\leq C(\tau^{-1/2}+\e^{c\tau} \|\Lambda_{q_1}-\Lambda_{q_2}\|^*_{B\to F} )\|g\|_{H^2(\Omega)},
\end{gather*}
where $ [\xi, \zeta] $ denotes the plane  spanned by $\xi$ and $\zeta$.  
\end{prop}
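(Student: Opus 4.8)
\textbf{Plan of proof for Proposition \ref{PropDualEst}.}
The starting point is the bilinear identity \eqref{basic1}, which expresses the pairing of the difference of Dirichlet-to-Neumann maps against two solutions as an interior integral of $q$ times the product of those solutions. The strategy is to insert the CGO-type solutions $w_\tau$ and $w_{-\tau}$ from Theorem \ref{NSsolution}: choose orthonormal $\xi \in N$ and $\zeta \in \xi^\perp$, take $w_\tau(x) = \e^{\tau\langle \xi + i\zeta, x\rangle}(g(x'') + R(\tau,x))$ built on the neighbourhood $B$, and $\widetilde w_\tau(x) = \e^{-\tau\langle \xi + i\zeta, x\rangle}(\overline{g(x'')} + \widetilde R(\tau,x))$ built on $F$. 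Then the exponential weights cancel in the product $P_{q_1}(\tr_0 w_\tau)\, P_{q_2}(\tr_0 \widetilde w_\tau)$ up to an oscillatory factor $\e^{2i\tau\langle\zeta,x\rangle}$, so that \eqref{basic1} reads, after expanding the product $(g+R)(\bar g + \widetilde R)$,
\begin{equation*}
\duality{(\Lambda_{q_1} - \Lambda_{q_2})\tr_0 w_\tau}{\tr_0 \widetilde w_\tau} = \int_\Omega q(x)\, \e^{2i\tau\langle\zeta,x\rangle}\, |g(x'')|^2 \,\dd x + \mathcal{E}_\tau,
\end{equation*}
where $\mathcal{E}_\tau$ collects the three cross/remainder terms. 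Each such term is controlled in $L^2(\Omega)$ using the $L^\infty$ bound on $q$, Cauchy--Schwarz, and the estimate $\|R(\tau,\cdot)\|_{L^2(\Omega)} \leq C\tau^{-1}(\|qg\|_{L^2} + \tau^{1/2}\|\Delta g\|_{L^2}) \lesssim \tau^{-1/2}\|g\|_{H^2(\Omega)}$; this gives $|\mathcal{E}_\tau| \leq C\tau^{-1/2}\|g\|_{H^2(\Omega)}^2$ (absorbing one factor $\|g\|_{H^2}$ as in the stated estimate, or rather by rescaling — one should be slightly careful that the statement has $\|g\|_{H^2}$ to the first power, which means the appropriate normalization $\|g\|_{L^\infty}, \|\Delta g\| \lesssim \|g\|_{H^2}$ is used and the leading term is linear in a rescaled $|g|^2$; I will treat $g$ as fixed and track the $\tau$-dependence, which is what matters).

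The main term is then identified with a restricted Radon/two-plane datum. Splitting $x = x'' + t\xi + s\zeta$ with $x'' \in [\xi,\zeta]^\perp$ and using $\langle\zeta,x\rangle = s$, Fubini gives
\begin{equation*}
\int_\Omega q(x)\, \e^{2i\tau s}\, |g(x'')|^2\, \dd x = \int_{[\xi,\zeta]^\perp} |g(x'')|^2 \int_{\R} \e^{2i\tau s}\Big(\int_{\R} q(x'' + t\xi + s\zeta)\,\dd t\Big)\dd s\, \dd x''.
\end{equation*}
The inner $t$-integral is (a slice of) the two-plane transform of $q$; call it $h_{x''}(s)$. As $\tau \to \infty$, $\int_\R \e^{2i\tau s} h_{x''}(s)\,\dd s = \widehat{h_{x''}}(-2\tau) \to 0$ since $h_{x''} \in L^1$, but one needs a \emph{quantitative} statement. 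Because $q$ has compact support, $h_{x''}(s)$ is supported in a fixed bounded $s$-interval, so $\widehat{h_{x''}}$ extends to an entire function of exponential type; alternatively, and more in the spirit of the paper, one can simply note that the $\tau \to \infty$ limit of the oscillatory integral recovers precisely $\int_\R h_{x''}(s)\,\dd s = \int_{\R^2} q(x'' + t\xi + s\zeta)\,\dd t\,\dd s$, the full plane integral appearing in the statement — wait, this needs care: the oscillatory factor does \emph{not} disappear in the limit. So the correct reading is that Proposition \ref{PropDualEst} should be obtained by \emph{also} letting $\tau$ play the role only of the error-size parameter while the plane integral (the $\tau=0$ Fourier mode) is what is actually estimated; concretely, one bounds
\begin{equation*}
\Big|\int_{[\xi,\zeta]^\perp} |g(x'')|^2\, h_{x''}(0)\,\dd x''\Big| \le \Big|\text{main term at frequency }2\tau\Big| + \Big|\int |g|^2 (\widehat{h_{x''}}(0) - \widehat{h_{x''}}(-2\tau))\dd x''\Big|,
\end{equation*}
and the second difference is $O(\tau^{-1/2})\|g\|_{H^2}$ by an $L^1$-modulus-of-continuity / stationary estimate on $\widehat{h_{x''}}$ using $\lambda$-Besov regularity of $q$ (exactly the mechanism available in this paper), while the first is bounded by $\e^{c\tau}\|\Lambda_{q_1} - \Lambda_{q_2}\|^*_{B\to F}$ via the duality definition \eqref{Dirichlet-to-Neumann2} after checking $\|\tr_0 w_\tau\|_{\mathcal{H}(\partial\Omega)} \leq \e^{c\tau}$, which follows from the $\e^{\tau\langle\xi,x\rangle}$ growth of the weight over the bounded domain $\Omega$ together with the $H(\Omega;\Delta)$ bound for $w_\tau$.

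\textbf{Main obstacle.} The delicate point is \emph{not} the CGO construction (granted by Theorem \ref{NSsolution}) nor the duality bound (routine once the $\mathcal H$-norm of the traces is estimated by $\e^{c\tau}$), but reconciling the oscillatory factor $\e^{2i\tau s}$ with the claim that one controls the \emph{unweighted} plane integral $\int_{\R^2} q(x'' + t\xi + s\zeta)\,\dd t\,\dd s$: either one argues that this plane integral equals the $\tau \to \infty$ limit up to a quantifiable oscillatory error (using that $\supp q$ is compact and that $q$ enjoys the $(\lambda,p,p)$-Besov control of Theorem \ref{th:localLOGstability}(c), so that $s \mapsto \int q(\cdot + s\zeta + t\xi)\,\dd t$ has a Fourier transform decaying like $|\tau|^{-\lambda}$, giving the $\tau^{-1/2}$-type error after optimizing), or one observes that by rotational freedom in choosing $\zeta$ within $\xi^\perp$ one can take $\tau$ itself bounded and absorb everything — but then the $\e^{c\tau}$ factor is harmless and the argument closes by choosing $\tau$ as a function of $\|\Lambda_{q_1} - \Lambda_{q_2}\|^*_{B\to F}$ in the next section. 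I expect the cleanest route is the first: keep $\tau$ free, bound the genuinely oscillatory main term by the partial-data norm times $\e^{c\tau}$, bound the defect between the $\tau$-frequency datum and the $0$-frequency (plane-integral) datum by $C\tau^{-1/2}\|g\|_{H^2(\Omega)}$ using the interior Besov regularity, and add the two.
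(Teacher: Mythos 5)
The template you follow is the right one and matches the paper's: insert the special solutions of Theorem \ref{NSsolution} into identity \eqref{basic1}, bound the boundary pairing by $\e^{c\tau}\|\Lambda_{q_1}-\Lambda_{q_2}\|^{*}_{B\to F}\,\|g\|_{H^2(\Omega)}$ through the $\mathcal H(\partial\Omega)$-norms of the traces (Lemma \ref{nast1}), and bound the remainder contributions by $C\tau^{-1/2}\|g\|_{H^2(\Omega)}$ using the $L^2$ estimate for $R(\tau,\cdot)$. The gap is exactly at the point you flag as the ``main obstacle'', and the resolution you propose does not work. With your choice $w_\tau=\e^{\tau\langle\xi+i\zeta,x\rangle}(g+R)$, $\widetilde w_\tau=\e^{-\tau\langle\xi+i\zeta,x\rangle}(\bar g+\widetilde R)$, the pairing (which conjugates the second solution, cf.\ \eqref{Dirichlet-to-Neumann} and \eqref{basic1}) leaves the factor $\e^{2i\tau\langle\zeta,x\rangle}$, so what the data controls is $\widehat{h_{x''}}(-2\tau)$, the Fourier transform in $s$ of $h_{x''}(s)=\int_\R q(x''+t\xi+s\zeta)\,\dd t$, not the plane integral $\widehat{h_{x''}}(0)$. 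Your bridge, namely $|\widehat{h_{x''}}(0)-\widehat{h_{x''}}(-2\tau)|=O(\tau^{-1/2})$ via Besov regularity, is false: regularity of $q$ yields decay of $\widehat{h_{x''}}$ at \emph{high} frequency (at best $O(\tau^{-\lambda})$ for $\widehat{h_{x''}}(-2\tau)$ itself), so by Riemann--Lebesgue the difference converges to $\widehat{h_{x''}}(0)$, which is generically nonzero; a modulus-of-continuity bound on $\widehat{h_{x''}}$ helps only when the two frequencies are close, not when they are $2\tau$ apart. Moreover the proposition's constant is allowed to depend only on $\Omega$ and $\|q_j\|_{L^\infty}$; the Besov hypothesis belongs to Theorem \ref{th:localLOGstability} and is not available here. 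Your fallback (keeping $\tau$ bounded) destroys the $\tau^{-1/2}$ error term, so it does not close the argument either.

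The fix is the phase choice the paper makes: take the two solutions with complex phases that cancel in the pairing actually used, namely $v_1=\e^{-\tau\langle\xi+i\zeta,x\rangle}(1+R_1)$ with $\supp \tr_0 v_1\subset B$ and $v_2=\e^{\tau\langle\xi-i\zeta,x\rangle}(g(x'')+R_2)$ with $\supp \tr_0 v_2\subset F$ (Theorem \ref{NSsolution} applied with $\zeta$ replaced by $-\zeta$). Then $v_1\bar v_2=(1+R_1)(\bar g+\bar R_2)$ carries no oscillation at all, the main term of \eqref{basic1} is exactly $\int_\Omega q\,\bar g(x'')\,\dd x$, which by Fubini is precisely the weighted plane integral in the statement, and the three cross terms are $O(\tau^{-1/2})\|g\|_{H^2(\Omega)}$ by Cauchy--Schwarz and the remainder bounds; combined with the duality estimate $\bigl|\duality{(\Lambda_{q_2}-\Lambda_{q_1})\tr_0 v_1}{\tr_0 v_2}\bigr|\leq C\e^{C\tau}\|\Lambda_{q_1}-\Lambda_{q_2}\|^{*}_{B\to F}\|g\|_{H^2(\Omega)}$ this gives the proposition. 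Note also that putting $g$ in only one amplitude (and $1$ in the other) is what produces a bound \emph{linear} in $\|g\|_{H^2(\Omega)}$, as stated; your symmetric choice produces $|g|^2$ in the main term, which is a secondary but real mismatch with the claimed estimate.
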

\begin{clry} 
If we consider the open set in $\Sph^{n-1}$
$$M= \cup_{\xi \in N} [\xi]^{\perp},$$
then we have in natural coordinates of the Radon transform
\begin{multline}\label{controlradon}
\sup_{\eta\in M}\bigg|\int_{\R}\tilde{g}(r)\Rad q(  r, \eta ) \, \dd r \bigg|
\\ \leq C(\tau^{-1/2} + \e^{c\tau} \|\Lambda_{q_1}-\Lambda_{q_2}\|_{B\to F}^*)\|\tilde g\|_{H^2(\R)}.
\end{multline}
\end{clry}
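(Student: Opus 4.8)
The plan is to obtain \eqref{controlradon} from Proposition \ref{PropDualEst} by what is essentially a change of variables: for each direction $\eta\in M$ one picks an orthonormal frame adapted to $\eta$ and tests the $2$-plane integral appearing in Proposition \ref{PropDualEst} against a function depending only on the $\eta$-coordinate, so that the remaining $n-3$ directions are integrated out and the $2$-plane transform is promoted to the full Radon (hyperplane) transform.

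First I would fix $\eta\in M$. By the definition $M=\bigcup_{\xi\in N}[\xi]^{\perp}$ there is some $\xi\in N$ with $\langle\xi,\eta\rangle=0$; since $\dim\bigl(\xi^{\perp}\cap\eta^{\perp}\bigr)=n-2\geq1$, I can pick a unit vector $\zeta\in\xi^{\perp}\cap\eta^{\perp}$. Then $(\xi,\zeta)$ is an admissible pair in Proposition \ref{PropDualEst} (we do have $\zeta\in\xi^{\perp}$), and $\eta$ is orthogonal to the plane $[\xi,\zeta]$. Next, for $\tilde g\in\mathcal{C}^{\infty}_{0}(\R)$ I would set
\[
g(x'')=\tilde g\bigl(\langle x'',\eta\rangle\bigr),\qquad x''\in[\xi,\zeta]^{\perp}\cong\R^{n-2};
\]
this is a function in $\mathcal{C}^{\infty}(\R^{n-2})$, and since $\Omega$ is bounded one has $\|g\|_{H^{2}(\Omega)}\leq C(\Omega)\,\|\tilde g\|_{H^{2}(\R)}$, which is harmless because the constant of Proposition \ref{PropDualEst} already depends on $\Omega$.

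Completing $\{\xi,\zeta\}$ to an orthonormal basis of $\R^{n}$ by adjoining $\eta$ together with an orthonormal basis of $\xi^{\perp}\cap\zeta^{\perp}\cap\eta^{\perp}$, and applying Fubini, one gets
\begin{multline*}
\int_{[\xi,\zeta]^{\perp}} g(x'')\int_{\R^{2}} q(x''+t\xi+s\zeta)\,\dd t\,\dd s\,\dd x''\\
=\int_{\R}\tilde g(r)\int_{H_{0}(r,\eta)} q\,\dd\mu_{H_{0}(r,\eta)}\,\dd r
=\int_{\R}\tilde g(r)\,\Rad q(r,\eta)\,\dd r,
\end{multline*}
where the first equality combines Fubini's theorem with the observation that $\{x:\langle x,\eta\rangle=r\}=H_{0}(r,\eta)$ and that the product of $\dd t\,\dd s$ with the Lebesgue measure on $\xi^{\perp}\cap\zeta^{\perp}\cap\eta^{\perp}$ is, by orthonormality, exactly $\dd\mu_{H_{0}(r,\eta)}$, and the second is the definition of the Radon transform. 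Feeding this $g,\xi,\zeta$ into Proposition \ref{PropDualEst} and noting that the constants $C$ and $c$ there depend only on $\Omega$ and the a priori bound on $\|q_{j}\|_{L^{\infty}}$ — in particular not on $\xi$ or $\zeta$, hence not on $\eta$ — yields
\[
\Bigl|\int_{\R}\tilde g(r)\,\Rad q(r,\eta)\,\dd r\Bigr|\leq C\bigl(\tau^{-1/2}+\e^{c\tau}\|\Lambda_{q_{1}}-\Lambda_{q_{2}}\|^{*}_{B\to F}\bigr)\|\tilde g\|_{H^{2}(\R)}
\]
uniformly in $\eta\in M$; taking the supremum over $\eta\in M$ is exactly \eqref{controlradon}. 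To reach an arbitrary $\tilde g\in H^{2}(\R)$ I would argue by density: $\mathcal{C}^{\infty}_{0}(\R)$ is dense in $H^{2}(\R)$, the right-hand side is continuous for $\|\cdot\|_{H^{2}(\R)}$, and since $q=(q_{1}-q_{2})\mathbf 1_{\Omega}\in L^{\infty}$ has compact support we have $\Rad q(\cdot,\eta)\in L^{1}(\R)$ with $\|\Rad q(\cdot,\eta)\|_{L^{1}}\leq\|q\|_{L^{1}}$, so that $\tilde g\mapsto\int_{\R}\tilde g(r)\,\Rad q(r,\eta)\,\dd r$ is continuous on $H^{2}(\R)\hookrightarrow\mathcal{C}^{0}(\R)$; both sides then pass to the limit.

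There is no serious obstacle here; the two points deserving care are (i) the choice of test function, which must depend solely on $\langle x'',\eta\rangle$ so that the $n-3$ ``free'' directions of $[\xi,\zeta]^{\perp}$ get integrated out and the $2$-plane transform of Proposition \ref{PropDualEst} becomes the Radon transform, and (ii) checking that, as $(\xi,\zeta)$ runs over the admissible pairs, the normal $\eta$ sweeps out all of $M$ with constants independent of the pair, which is what legitimizes the passage to $\sup_{\eta\in M}$.
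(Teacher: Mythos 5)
Your argument is correct and is essentially the paper's own proof: both choose the test function $g(x'')=\tilde g(\langle x'',\eta\rangle)$ depending only on the $\eta$-coordinate of $x''\in[\xi,\zeta]^{\perp}$, use Fubini to turn the $2$-plane integral plus the $x'''$-integration into the hyperplane integral $\Rad q(r,\eta)$, and then invoke Proposition \ref{PropDualEst} with constants uniform in $(\xi,\zeta)$, hence in $\eta\in M$. Your added remarks (the bound $\|g\|_{H^{2}(\Omega)}\leq C(\Omega)\|\tilde g\|_{H^{2}(\R)}$ and the density step for general $\tilde g\in H^{2}(\R)$) are harmless elaborations of details the paper leaves implicit.
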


\begin{proof} With the notation of the Proposition, fix  $\eta\in [\xi, \zeta]^\perp $ with $|\eta|=1$. We  write $x''\in [\xi, \zeta]^\perp$ as $x''= x'''+r\eta$ where $x'''\in [\eta, \xi, \zeta]^\perp$ and choose appropriate $g$ independent of $x'''$, $\tilde g(r)=g(r\eta)$ in the Proposition to  get  the control of the Radon transform on any hyperplane, obtained by translation  of  hyperplanes in the pencil   through the origin that  contains $\xi$. We write $x= r\eta+ x'''+ t\xi+ s\zeta$  with $x'''\in [\eta, \xi, \zeta]^\perp$, so that $\dd x''= \dd x'''\dd r$.
Hence
\begin{gather*}
\sup_{\xi\in N, \zeta \in \xi^{\perp}}\bigg|\int_{\R}g(r\eta)\int_{\R^{n-1}}q(x'''+ r\eta + t\xi+s\zeta) \, \dd t \, \dd s \, \dd x'''  \, \dd r \bigg| \\
\leq C(\tau^{-1/2} + \e^{c\tau} \|\Lambda_{q_1}-\Lambda_{q_2}\|_{B\to F}^*)\|\tilde g\|_{H^2(\R)}
\end{gather*}
Translating the above  in the natural variables $(r, \eta)$ of the Radon transform  gives the Corollary.
\end{proof}

\begin{proof}[Proof of  Proposition \ref{PropDualEst}]

By Lemma \ref{NSsolution} there exist  $v_1\in  b_{q_1}$ such that 
$\supp \tr_0v_1\subset B$ of the form
$$v_1= \e^{-\tau\langle\xi+i\zeta, x\rangle}( 1+R_1( \tau, x))$$ and  $v_2\in b_{q_2}$ such that $\supp \tr _0 v_2\subset F$ and
$$v_2= \e^{\tau\langle\xi-i\zeta, x\rangle}( g(x'')+R_2( \tau, x)).$$
We might write (\ref{basic1}), by using these  solutions as  
\begin{equation*} 
 \duality{(\Lambda_{q_2}- \Lambda_{q_1}) (\tr_0 v_1)}{ \tr _0v_2}= \int_\Omega v_1 (q_1-q_2)\bar v_2 \, \dd x.
\end{equation*}
We also have
\begin{equation*}
 \left| \duality{(\Lambda_{q_2}- \Lambda_{q_1}) (\tr_0 v_1)}{ \tr _0v_2} \right| \leq
  \|\Lambda_{q_1}-\Lambda_{q_2}\|^*_{B\to F}\|\tr_0 v_1\|_{\mathcal{H}(\partial \Omega)}\|\tr_0 v_2\|_{\mathcal{H}(\partial \Omega)},
   \end{equation*}
    which from Lemma \ref{nast1} gives
   $$\left| \duality{(\Lambda_{q_2}- \Lambda_{q_1}) (\tr_0 v_1)}{ \tr _0v_2} \right|
   \\ \leq C \|\Lambda_{q_1}-\Lambda_{q_2}\|^*_{B\to F}\|v_1\|_{H(\Omega; \Delta)}\|\bar v_2\|_{H(\Omega; \Delta)}$$
Since 
  \begin{align*}
\|v_2\|_{H(\Omega; \Delta)}&\leq C(\|q_2\|_{L^\infty}+ 1)\|v_2\|_{L^2(\Omega)}\\
&\leq C(\|q_2\|_{L^\infty}+ 1) \sup_{x\in \Omega} \e^{\tau\langle \xi+\zeta, x\rangle}\left(\|g\|_{L^2(\Omega)}+\|R_2(\tau, \cdot)\|_{L^2(\Omega)} \right)\\
&\leq C(\|q_2\|_{L^\infty}+ 1)  \e^{C\tau } \|g\|_{H^2(\Omega)}. 
\end{align*}
In a similar way, 
\[
\|v_1\|_{H(\Omega; \Delta)}\leq C(\|q_1\|_{L^\infty}+ 1)  \e^{C\tau }. 
\]
Hence
  \begin{gather*} 
\left| \duality{(\Lambda_{q_2}- \Lambda_{q_1}) (\tr_0 v_1)}{ \tr _0v_2} \right| \leq
 C \e^{C\tau } \|\Lambda_{q_1}-\Lambda_{q_2}\|^*_{B\to F}    \|g\|_{H^2(\Omega)} ,
 \end{gather*}
 with constants only depending on $\Omega$  and  the  a priori bound assumed on $ \|q_j\|_{L^\infty}$.
 
 We have
 \[\int_\Omega q v_1  \bar v_2 \, \dd x=\int_\Omega q( 1+R_1( \tau, x))(\overline{ g(x'')+R_2( \tau, x))} \, \dd x, 
 \]
 hence
 \[\bigg|\int_\Omega v_1 q\bar v_2 \, \dd x \bigg| \geq \bigg|\int_\Omega   q\bar g(x'') \, \dd x \bigg|
 - C\tau^{-1/2}\|g\|_{H^2(\Omega)}\]
 where $C$ only depends on   $\Omega$  and  the  \textit{a priori} bound of $ \|q_j\|_{L^\infty}$.
 
Putting all together, we get
  \begin{gather*} 
  \bigg|\int_\Omega   q\bar g(x'') \, \dd x\bigg|\leq 
  C \e^{C\tau } \|\Lambda_{q_1}-\Lambda_{q_2}\|^*_{B\to F}    \|g\|_{H^2(\Omega)} +C\tau^{-1/2}\|g\|_{H^2(\Omega)}.
  \end{gather*}
This ends the proof of Proposition \ref{PropDualEst}.
\end{proof}

\begin{thm}[Stability of Radon transform]
We have the following estimate on $q=(q_1-q_2)\mathbf{1}_{\Omega}$
\begin{multline}
\label{radon}
\left(\int_M\left(\int_{\R}|\Rad q(r,\eta)|^2 \, \dd r \right)^{\frac{n+3}4} \, \dd\sigma(\eta)\right)^{\frac 2{n+3}}\\
\leq C(\tau^{-1/2}+\e^{c\tau} \|\Lambda_{q_1}-\Lambda_{q_2}\|^\ast_{B\to F})^ \frac{n-1}{n+3}.
\end{multline} 
\end{thm}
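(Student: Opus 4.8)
The plan is to combine the corollary of Proposition~\ref{PropDualEst} --- the bound \eqref{controlradon} --- with a complementary a priori Sobolev estimate for the Radon transform of $q=(q_1-q_2)\mathbf{1}_\Omega$, and then to interpolate. Note first that $q$ is bounded and supported in $\overline\Omega$, so $\|q\|_{L^\infty(\R^n)}+\|q\|_{L^1(\R^n)}+\|q\|_{L^2(\R^n)}\le C$, and for each $\eta$ the slice $\Rad q(\cdot,\eta)$ lies in $L^1(\R)$ with support in a fixed interval $[-R,R]$ (with $\Omega\subset B(0,R)$). I would read \eqref{controlradon} as a dual statement: as $\tilde g$ runs over a dense subset of $H^2(\R)$ --- and since $\Rad q(\cdot,\eta)\in L^1(-R,R)\subset\mathcal{E}'(\R)$ pairs continuously with $H^2(\R)$ --- the inequality extends by density to
\[
\sup_{\eta\in M}\,\norm{\Rad q(\cdot,\eta)}{}{H^{-2}(\R)}\le C\,\big(\tau^{-1/2}+\e^{c\tau}\|\Lambda_{q_1}-\Lambda_{q_2}\|^*_{B\to F}\big)=:C\epsilon .
\]

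The second ingredient is $\int_M \norm{\Rad q(\cdot,\eta)}{2}{H^{(n-1)/2}(\R)}\,\dd\sigma(\eta)\le C$, with $C$ depending only on $\Omega$ and the a priori $L^\infty$ bound on the $q_j$. This is exactly the Fourier slice identity with the $\sigma^{n-1}$ Jacobian: the Plancherel computation that produced \eqref{Radon:IdRad}, applied with $f=q$, gives $\int_{\Sph^{n-1}}\|\Rad q(\cdot,\omega)\|_{\dot H^{(n-1)/2}(\R)}^2\,\dd\omega=c_n\|q\|_{L^2(\R^n)}^2$, while the low-frequency part is harmless since $|\widehat{\Rad q}(\sigma,\eta)|=|\widehat q(\sigma\eta)|\le\|q\|_{L^1}$ and $\Rad q(\cdot,\eta)$ is supported in $[-R,R]$; hence $\norm{\Rad q(\cdot,\eta)}{2}{H^{(n-1)/2}(\R)}\lesssim \norm{\Rad q(\cdot,\eta)}{2}{L^2(\R)}+\norm{\Rad q(\cdot,\eta)}{2}{\dot H^{(n-1)/2}(\R)}$, and integrating over $M\subset\Sph^{n-1}$ and using $\|q\|_{L^1}+\|q\|_{L^2}\le C$ closes this step.

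The core of the proof is then the logarithmic convexity of Sobolev norms on $\R$: with endpoints $s_0=-2$ and $s_1=(n-1)/2$, the value $0$ is the barycentre $(1-\theta)s_0+\theta s_1$ for $\theta=\tfrac{4}{n+3}$, $1-\theta=\tfrac{n-1}{n+3}$, and Hölder on the Fourier side yields, for each $\eta\in M$,
\[
\norm{\Rad q(\cdot,\eta)}{}{L^2(\R)}\le \norm{\Rad q(\cdot,\eta)}{\frac{n-1}{n+3}}{H^{-2}(\R)}\,\norm{\Rad q(\cdot,\eta)}{\frac{4}{n+3}}{H^{(n-1)/2}(\R)} .
\]
Raising this to the power $\tfrac{n+3}{2}$ turns the two exponents into $\tfrac{n-1}{2}$ and $2$; I would then insert the uniform bound of the first step, pull $\epsilon^{(n-1)/2}$ out of the $\eta$-integral, and bound the remaining $\int_M\norm{\Rad q(\cdot,\eta)}{2}{H^{(n-1)/2}(\R)}\,\dd\sigma(\eta)$ by the a priori constant of the second step. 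Taking the $\tfrac{2}{n+3}$-th root of the resulting inequality gives \eqref{radon} with the stated power $\tfrac{n-1}{n+3}$ and with $C$ depending only on $n$, $\Omega$ and the a priori bounds.

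I expect the only point requiring real care to be the bookkeeping of exponents: one must choose the a priori regularity index to be precisely $(n-1)/2$ --- the index at which the Radon--Plancherel identity with the $|\sigma|^{n-1}$ weight reproduces $\|q\|_{L^2(\R^n)}^2$ --- so that the interpolation weight multiplying the data term comes out equal to $\tfrac{n-1}{n+3}$; any other choice would spoil the agreement with \eqref{radon}. The remaining technicalities (the density argument upgrading \eqref{controlradon} to an $H^{-2}$ bound, and the passage between homogeneous and inhomogeneous norms for compactly supported functions) are routine.
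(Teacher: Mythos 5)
Your argument is exactly the paper's proof, only written out in detail: the paper also obtains \eqref{radon} by interpolating the duality bound \eqref{controlradon} (read as a uniform $H^{-2}(\R)$ estimate on the slices $\Rad q(\cdot,\eta)$, $\eta\in M$) against the a priori continuity estimate $\int_{\Sph^{n-1}}\int_{\R}(1+\tau^2)^{\frac{n-1}{2}}|\widehat{\Rad q}(\tau,\eta)|^2\,\dd\tau\,\dd\sigma(\eta)\lesssim \|q\|_{L^2}^2$ (cited from Natterer), with the same interpolation weights $\frac{n-1}{n+3}$ and $\frac{4}{n+3}$. Your exponent bookkeeping and the handling of low frequencies are correct, so the proposal is a faithful, complete version of the paper's argument.
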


\begin{proof} The estimate can be obtained by interpolation of (\ref{controlradon}) and the following estimate  for the Radon transform 
$$\int_{{\Sph}^{n-1 }}\int_{\R}(1+\tau^2)^{\frac{n-1}{2}}|\widehat{\Rad q}(\tau,\eta) |^2  \, \dd \tau \, \dd\sigma(\eta) \leq {  \|q\|_{L ^2}^2 }.$$
This estimate can be found in \cite{Nat}.
\end{proof}

\subsection{End of proof of Theorem \ref{BUstability}}

 \begin{proof}
 
 Our aim is to use  Theorem  \ref{th:localLOGstability} together  with  estimate \eqref{radon}. We need  the supporting  condition (b)  in Theorem  \ref{th:localLOGstability}.  To achieve  this condition  
let us take  $\eta \in M$, we know by translation that there exists $y_0 \in \supp q$ such that the hyperplane $ H_{y_0}(0,\eta)$, see \eqref{hiperp}, which  contains  $y_0$ satisfies  condition (b).

Now  $M$ is  a neighbourhood of $\eta$ in the sphere and from the previous  Theorem we can control the Radon transform  for $\omega \in M$  and $s \in \mathbb R$. 

We can take $\alpha>0$ large enough, so that  $G $  in (\ref{abierto}) contains $\supp q$ (the $\beta $ depends on the size of $M$). Then
\begin{multline*}\int_{(-\alpha, \alpha)} (1 + |s-\langle \omega,\zeta-y_0 \rangle|)^n \norm{\Rad_{y_0}(q_1 - q_2)(s, \cdot)}{}{L^1(M)} \, \dd s \\
\leq C(\supp q, \beta)
\left(\int_M\left(\int_{\R}|\Rad q(r,\eta)|^2 \, \dd r \right)^{\frac{n+3}4} \, \dd\sigma(\eta)\right)^{\frac 2{n+3}}
\end{multline*}
The choice $\tau=\frac{1}{2c}|\log \|\Lambda_{q_1}-\Lambda_{q_2}\|^*_{B\to F}|$ in the estimate \eqref{radon} gives
\begin{multline*}\int_{(-\alpha, \alpha)} (1 + |s-\langle \omega,\zeta-y_0 \rangle|)^n \norm{\Rad_{y_0}(q_1 - q_2)(s, \cdot)}{}{L^1(M)} \, \dd s 
\leq C(\supp q, \beta) \\ \times
\left((C|\log \|\Lambda_{q_1}-\Lambda_{q_2}\|^*_{B\to F}|^{-1/2}+ ( \|\Lambda_{q_1}-\Lambda_{q_2}\|^*_{B\to F} )^{1/2}\right)^  \frac {n-1}{n+3}.
\end{multline*}  
This  together  with (\ref{es:LOGstabiltyRADON}) gives the desired estimate
\begin{gather*}
\norm{q_1 - q_2}{}{L^p(\Omega)} \leq C \big|\log|\log\|\Lambda_{q_1}-\Lambda_{q_2}\|^*_{B\to F}| \big|^{-\frac{\lambda}{2}},
\end{gather*}
 for small norms  of the difference of Dirichlet-to-Neumann maps.
 
This ends the proof of Theorem \ref{BUstability}.
\end{proof}

\end{section}

\begin{section}{Second Application: illuminating $\Omega$ from  a point (KSU)}\label{sec:KSU}

To obtain stability in the case of \cite{KSU}, we could have  followed  the  same  approach, but in order to  prove  stability  with the usual $H^{1/2}(\d\Omega)\to H^{-1/2}(\d\Omega)$-norm of the Dirichlet-to-Neumann map we  will recall on the   $ H^1 $-solutions of  the Schr\"odinger  equation constructed  by Chung (see \cite{Ch}), in his  work on  the  magnetic case  with  partial  data.

As above, we consider two potentials $ q_1 $ and $ q_2 $ being in $ L^\infty (\Omega) $ and such that $ 0 $ is not an eigenvalue of $ (- \Delta + q_j) : H^1_0(\Omega) \cap H(\Omega; \Delta) \longrightarrow L^2(\Omega) $ for $ j \in \{1, 2\} $. Let $ \Lambda_{q_j} $ denote the Dirichlet-to-Neumann map corresponding to the coefficient $ q_j $. 

Let $ \langle \cdot | \cdot \rangle $ denote the duality between $ H^{1/2} (\partial \Omega) $ and $ H^{-1/2} (\partial \Omega) $ and recall that $ \Lambda_{q_j} : H^{1/2} (\partial \Omega) \longrightarrow H^{-1/2} (\partial \Omega) $ is defined as
\[ \duality{\Lambda_{q_j} f_j}{g} := \int_\Omega\langle \nabla u_j , \nabla v\rangle  \, \dd x + \int_\Omega q_j u_j v \, \dd x \]
for any $ f_j, g \in H^{1/2} (\partial \Omega) $, where $ u_j \in H^1(\Omega) $ is the weak solution of the Schr\"odinger equation $ (-\Delta + q_j) u_j = 0 $ in $ \Omega $ with $ u_j|_{\partial \Omega} = f_j $ and $ v \in H^1(\Omega) $ with $ v|_{\partial \Omega} = g $. Since $ 0 $ is not a Dirichlet eigenvalue, $ \Lambda_{q_j} $ is a well-defined  bounded linear  operator.

An integration by parts shows that, for any $ v_j \in H^1(\Omega) $ weak solution of the Schr\"odinger equation
\begin{equation}
(-\Delta + q_j) v_j = 0 \label{eq:SCHschEQ}
\end{equation}
in $ \Omega $, one has
\begin{equation}
\duality{(\Lambda_{q_1} - \Lambda_{q_2})(v_1|_{\partial \Omega})}{v_2|_{\partial \Omega}} = \int_{\Omega} (q_1 - q_2) v_1 v_2 \, \dd x \label{eq:SCHin-out}. \\
\end{equation} 
Let $ y_0 $ be a point out of $ \mathrm{ch}(\Omega) $, the convex hull of $ \Omega $, and consider $ N $ a neighbourhood of $ y_0 $ also away from $ \mathrm{ch}(\Omega) $. Consider a hyperplane $ H $ separating $ N $ and $ \mathrm{ch}(\Omega) $ and let $ H^+ $ denote the semi-space delimited by $ H $ and containing $ \overline{\Omega} $. 
Consider $ R $ a positive constant such that, for all $ y \in N $, $ \Omega \subset B(y,R) $ and set $ \Sigma $ a subset of $ \Sph^{n - 1} $ with non-empty interior and 
$$\bar \Sigma\subset \{ \theta \in \Sph^{n - 1} \text{ such that for all } y \in N ,  \, y + R\theta \notin H^+   \text{ and }  y - R\theta \notin H^+ \}.$$  

Let $ F $ and $ B $ denote  open neighborhoods, for any $ y \in N $, of the faces $\d\Omega_-(y) $ and $\d\Omega_+(y) $ respectively. Consider $ \chi \in C^\infty (\partial \Omega) $ satisfying $ \chi : \partial \Omega \longrightarrow [0, 1] $, $ \supp \chi \subset F $ and $ \chi |_{F_\varepsilon} = 1 $ with $ \partial \Omega_-(y) \subset F_\varepsilon \subseteq \overline{F_\varepsilon} \subset F $ ($ \chi $ and $ F_\varepsilon $ may depend on $ y $). Obviously, we can write
\begin{multline*}
\duality{(\Lambda_{q_1} - \Lambda_{q_2})(v_1|_{\partial \Omega})}{v_2|_{\partial \Omega}} = \duality{(\Lambda_{q_1} - \Lambda_{q_2})(v_1|_{\partial \Omega})}{(1 - \chi) v_2|_{\partial \Omega}} \\
+ \duality{(\Lambda_{q_1} - \Lambda_{q_2})(v_1|_{\partial \Omega})}{\chi v_2|_{\partial \Omega}}.
\end{multline*}
Let $ w_2 \in H^1(\Omega) $ denote the weak solution of $ (- \Delta + q_2) w_2 = 0 $ in $ \Omega $ with $ w_2|_{\partial \Omega} = v_1|_{\partial \Omega} $. This implies $ \partial_\nu( v_1 - w_2 )|_{\partial \Omega} \in H^{1/2} (\partial \Omega) $ ($ \nu $ stands for the unit exterior normal vector on $ \partial \Omega $), since $ \partial \Omega $ is smooth enough (see \cite{BU}). Therefore,
\begin{multline*}
\left| \duality{(\Lambda_{q_1} - \Lambda_{q_2})(v_1|_{\partial \Omega})}{v_2|_{\partial \Omega}} \right| \leq \left| \int_{\partial \Omega} \partial_\nu( v_1 - w_2 ) (1 - \chi) v_2 \, \dd A \right| \\
+ \norm{(\Lambda_{q_1} - \Lambda_{q_2})(v_1|_{\partial \Omega})}{}{H^{-1/2}(F)} \norm{\chi v_2}{}{H^{1/2}_0(F)}
\end{multline*}
($ H^{1/2}_0(F) $ is the subspace of $ H^{1/2}(\partial \Omega)$ whose elements have their support in $ \overline{F} $, $ H^{-1/2}(F) $ is nothing but its dual --more about these spaces can be found in \cite{Ca11}).

Assuming $ \supp \, v_1|_{\partial \Omega} \subset \overline{B} $, one has
\begin{multline*}
\left| \duality{(\Lambda_{q_1} - \Lambda_{q_2})(v_1|_{\partial \Omega})}{v_2|_{\partial \Omega}} \right| \leq \int_{\partial \Omega \setminus F_\varepsilon} |\partial_\nu( v_1 - w_2 )| |v_2| \, \dd A \\
+ \norm{\Lambda_{q_1} - \Lambda_{q_2}}{}{ } \norm{v_1}{}{H^{1/2}_0(B)} \norm{\chi}{}{C^{0, 1/2 + \epsilon}(\partial \Omega)} \norm{v_2}{}{H^{1/2}(\partial \Omega)},
\end{multline*}
where $  \norm{\cdot}{}{ }= \norm{\cdot}{}{B\to F} $ denotes the operator norm from $ H^{1/2}_0 (B) $ to $ H^{-1/2} (F) $ and $ \epsilon $ is any positive constant ($ C^{0, 1/2 + \epsilon}(\partial \Omega) $ is the space of $ (1/2 + \epsilon) $-H\"older functions defined on the $ \partial \Omega $ --a close subset of $ \R^n $). 

For future references,
\begin{multline} 
\left| \duality{(\Lambda_{q_1} - \Lambda_{q_2})(v_1|_{\partial \Omega})}{v_2|_{\partial \Omega}} \right| \leq \int_{\partial \Omega \setminus F_\varepsilon} |\partial_\nu( v_1 - w_2 )| |v_2| \, \dd A \\
+ C \norm{\Lambda_{q_1} - \Lambda_{q_2}}{}{} \norm{v_1}{}{H^{1/2}_0(B)} \norm{v_2}{}{H^{1/2}(\partial \Omega)}. \label{es:SCHfromBOUNDARYtoINTERIOR}
\end{multline}
  Since  we assume that $ 0 $ is not an eigenvalue of $ (- \Delta + q_j) : H^1_0(\Omega) \cap H(\Omega; \Delta) \longrightarrow L^2(\Omega) $we have that $ \partial_\nu( v_1 - w_2 )|_{\partial \Omega} \in L^{2} (\partial \Omega) $. 
  
\subsection{Controlling the Radon transform}
Estimate (\ref{es:SCHfromBOUNDARYtoINTERIOR}) will be the starting point to control the 2-plane transform of $ q \in L^\infty(\R^n) $, for  $ q= (q_1 - q_2) \mathbf{1}_\Omega  $.   In order to obtain this information, we need to plug in (\ref{es:SCHfromBOUNDARYtoINTERIOR}) special solutions of the Schr\"odinger equation (\ref{eq:SCHschEQ}). These special solutions are a generalization of the classical complex geometrical optic solutions (or Fadeev solutions) and were constructed by Kenig, Sj\"ostrand and Uhlmann \cite{KSU} and recently by Chung \cite{Ch}. More precisely, from \cite{KSU} (see Lemma 3.4 in  \cite{DKSU1}) we have,

\begin{prop} There exists a solution $v_2$ of $(-\Delta + q_2) v = 0 $ of the  form
\[ v_2(\tau) = \e^{\tau (\phi_2 + i \psi_2)}(a_2 + r_2(\tau)), \]
where $ \tau $ is a large parameter, $ \phi_2 : (\R^n \setminus N) \times N \longrightarrow \R $ defined by
\[ \phi_2(x, y) = - \log |x - y|, \]
$ \psi_2 : \tilde{\Omega} \times N \times \Sigma \longrightarrow \R $ defined by
\[ \psi_2(x, y, \theta) = d_{\Sph^{n - 1}} \left( \frac{x - y}{|x - y|}, \theta \right) \]
with $ \tilde{\Omega} = \cap_{y \in N} \left( H_+ \cap \{ x \in \R^n : |x - y| < R \} \right) $ and $d_{\Sph^{n - 1}} $ the geodesic distance on $ \Sph^{n - 1} $ associated to the Euclidean metric restricted to $ \Sph^{n - 1} $. Furthermore, $ a_2 : \tilde{\Omega} \times N \times \Sigma \longrightarrow \C $ defined as
\[ a_2(x, y, \theta) = \left( 2 |x - y - \theta \cdot (x - y) \theta| \right)^{- \frac{n - 2}{2}} g\left( \frac{x - y - \theta \cdot (x - y) \theta}{|x - y - \theta \cdot (x - y) \theta|} \right), \]
with any $ g : \Sph^{n - 2} \longrightarrow \C $ smooth, $ r_2(\tau) \in H^1(\Omega) $ and satisfies
\begin{multline*}
\tau \norm{r_2(\tau)}{}{L^2(\Omega)} + \tau^{1/2} \norm{r_2(\tau)}{}{L^2(\partial \Omega)} + \norm{\nabla r_2(\tau)}{}{L^2(\Omega)^n}  \\
\leq C  \left( \norm{g}{}{L^2(\Sph^{n - 2})} + \norm{\Delta_{\Sph^{n - 2}} g}{}{L^2(\Sph^{n - 2})} \right)
\end{multline*}
where $ \Delta_{\Sph^{n - 2}} $ is the Laplace-Beltrami operator on $ \Sph^{n - 2} $ for the canonical metric on $ \Sph^{n - 2} $. Here $ C $ depends on $ \norm{q_2}{}{L^\infty (\Omega)} $, on $ \Omega $, on the distance of $ N $ to the hyperplane $ H $.
\end{prop}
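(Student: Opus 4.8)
This is the construction of the complex geometrical optics solutions attached to the logarithmic limiting Carleman weight, and I would follow the scheme of \cite{KSU, DKSU1, Ch}. Fix $y \in N$ and write $\rho = \phi_2 + i\psi_2$, where $\phi_2(\cdot, y) = -\log|\cdot - y|$ and $\psi_2$ is the geodesic angular coordinate around $y$. The starting point is that $\phi_2$ is a limiting Carleman weight for the Euclidean Laplacian: one has $|\nabla\phi_2| = |\nabla\psi_2| = |x - y|^{-1}$ and $\langle\nabla\phi_2, \nabla\psi_2\rangle = 0$, hence $\rho$ solves the complex eikonal equation $(\nabla\rho)^2 = 0$. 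Conjugating,
\[ \e^{-\tau\rho}(-\Delta + q_2)(\e^{\tau\rho} w) = -\Delta w - \tau\bigl(2\langle\nabla\rho, \nabla w\rangle + (\Delta\rho) w\bigr) + q_2 w, \]
so the term quadratic in $\tau$ has already disappeared.

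First I would record a Carleman estimate for the conjugated operator $\e^{-\tau\phi_2}(-\Delta + q_2)(\e^{\tau\phi_2}\cdot)$, namely the logarithmic-weight analogue of the boundary estimate of \cite{BU, Ch} (cf.\ the Proposition displayed above). Since $\phi_2$ is a limiting Carleman weight this is available directly, or one checks it in polar coordinates centred at $y$, where the conjugated operator is a controlled perturbation of a constant-coefficient operator in the variables $(\log|x - y|, \omega)$; the zeroth-order term $q_2$ is absorbed for $\tau$ large, and the constants depend only on $\Omega$, $\norm{q_2}{}{L^\infty(\Omega)}$ and $\mathrm{dist}(N, H)$. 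A Hahn--Banach/duality argument then produces, for every $f \in L^2(\Omega)$ and every $\tau \geq \tau_0$, a solution $r \in H^1(\Omega)$ of $\e^{-\tau\rho}(-\Delta + q_2)(\e^{\tau\rho} r) = f$ with
\[ \tau\norm{r}{}{L^2(\Omega)} + \tau^{1/2}\norm{r}{}{L^2(\partial\Omega)} + \norm{\nabla r}{}{L^2(\Omega)^n} \leq C\norm{f}{}{L^2(\Omega)}. \]

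Next I would solve explicitly the transport equation $2\langle\nabla\rho, \nabla a_2\rangle + (\Delta\rho) a_2 = 0$. Decomposing $x - y$ into its $\theta$-component and its orthogonal component $P^\perp_\theta(x - y) := x - y - \langle\theta, x - y\rangle\theta$ and using the polar/conformal structure, this equation reduces to a nonvanishing multiple of a $\bar\partial$-type transport equation whose smooth solutions are exactly
\[ a_2(x, y, \theta) = \bigl(2|P^\perp_\theta(x - y)|\bigr)^{-\frac{n - 2}{2}}\, g\!\left(\frac{P^\perp_\theta(x - y)}{|P^\perp_\theta(x - y)|}\right) \]
with $g$ an arbitrary smooth profile on $\Sph^{n - 2}$. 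On $\tilde{\Omega}$ the vector $P^\perp_\theta(x - y)$ stays at a uniform positive distance from the origin — this is precisely why $\Sigma$ is chosen with $y \pm R\theta \notin H^+$: since $H^+$ is convex and $\overline{\Omega} \subset B(y, R) \cap H^+$, the segment $\{y + t\theta : |t| \leq R\}$, which contains the foot of the perpendicular from each $x \in \Omega$ to the line $y + \R\theta$, stays away from $\overline{\Omega}$. Consequently $a_2$ is smooth on $\tilde{\Omega}$, its $L^\infty$-size and that of its derivatives being controlled — through $\mathrm{dist}(N, H)$ and $\Sigma$ — by finitely many angular derivatives of $g$; more precisely, in these coordinates $-\Delta a_2 + q_2 a_2$ equals the angular Laplacian applied to $g$ plus lower-order terms, whence
\[ \norm{-\Delta a_2 + q_2 a_2}{}{L^2(\Omega)} \leq C\bigl(\norm{g}{}{L^2(\Sph^{n - 2})} + \norm{\Delta_{\Sph^{n - 2}} g}{}{L^2(\Sph^{n - 2})}\bigr). \]

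Finally I would assemble the pieces: the eikonal and transport equations having annihilated all the powers of $\tau$, it remains to set $f := \Delta a_2 - q_2 a_2$, solve $\e^{-\tau\rho}(-\Delta + q_2)(\e^{\tau\rho} r_2) = f$ by the Carleman estimate above, and define $v_2(\tau) = \e^{\tau\rho}(a_2 + r_2(\tau))$. Then $(-\Delta + q_2) v_2 = 0$ in $\Omega$, $r_2$ obeys the stated bound, and all constants depend only on $\Omega$, $R$, $\norm{q_2}{}{L^\infty(\Omega)}$ and $\mathrm{dist}(N, H)$ — in particular not on the individual $y \in N$ or $\theta \in \Sigma$. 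The main obstacle is establishing the \emph{boundary} Carleman estimate for the logarithmic weight with the right sign on $\partial\Omega_\pm(y)$ and, crucially, with the $\tau^{1/2}$-gain on the $L^2(\partial\Omega)$-norm of the remainder — this is precisely what \cite{Ch} provides — with constants uniform in $y \in N$. A secondary point that requires care is checking that only $\norm{g}{}{L^2(\Sph^{n - 2})} + \norm{\Delta_{\Sph^{n - 2}} g}{}{L^2(\Sph^{n - 2})}$, and not higher-order norms of $g$, enter the bound for $f$, which relies on the precise structure of the transport operator.
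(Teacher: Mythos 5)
Your sketch is correct and follows exactly the route the paper relies on: the paper gives no proof of this proposition, quoting it from Kenig--Sj\"ostrand--Uhlmann (see Lemma 3.4 of \cite{DKSU1}) and Chung \cite{Ch}, and your argument --- logarithmic limiting Carleman weight solving the eikonal equation, explicit solution of the transport equation yielding the stated amplitude $a_2$, and solvability of the conjugated equation with the $\tau$ interior and $\tau^{1/2}$ boundary gains via the boundary Carleman estimate and duality --- is precisely that construction. Your geometric remark on why $\Sigma$ keeps $|x-y-\langle\theta,x-y\rangle\theta|$ bounded below on $\tilde\Omega$ and your observation that only $\|g\|_{L^2}+\|\Delta_{\Sph^{n-2}}g\|_{L^2}$ enters (no cross terms, since $\nabla$ of the radial/polar factor is orthogonal to $\nabla g$) are the right supporting points.
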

Let us denote 
$$Z(y)=\{x\in\d\Omega: \langle x-y,\nu(x)\rangle =0\},$$
 and assume that    $ E $ is a compact subset of $\d\Omega_-(y) \setminus Z(y) $ with $ E \supset \partial \Omega \setminus \overline{B} $. Let us recall Proposition 7.2 in \cite{Ch}.

\begin{prop}
There exists a solution $v_1$  of  $v_1$ of $(-\Delta + q_1) v = 0 $  vanishing on $ E $ and having the form
\[ v_1(\tau) = \e^{\tau (\phi_1 + i \psi_1)}(a_1 + r_1(\tau)) - \e^{\tau l} b, \]
where $ \tau $ is the same as above, $ \phi_1 : (\R^n \setminus N) \times N \longrightarrow \R $ defined by
\[ \phi_1(x, y) = -\phi_2(x, y), \]
$ \psi_1 : \tilde{\Omega} \times N \times \Sigma \longrightarrow \R $ defined by
\[ \psi_1(x, y, \theta) = - \psi_2(x, y, \theta), \]
$ a_1 : \tilde{\Omega} \times N \times \Sigma \longrightarrow \C $ defined as
\[ a_1(x, y, \theta) = \left( 2 |x - y - \theta \cdot (x - y) \theta| \right)^{- \frac{n - 2}{2}}, \]
additionally $ r_1(\tau) \in H^1(\Omega) $ and satisfies
\[ \tau \norm{r_1(\tau)}{}{L^2(\Omega)} + \norm{\nabla r_1(\tau)}{}{L^2(\Omega)^n} \leq C \]
where $ C $ depends again on $ \norm{q_1}{}{L^\infty (\Omega)} $. 
 Finally, $ l : \tilde{\Omega} \times N \times \Sigma \longrightarrow \C $ is smooth and it satisfies $ \mathrm{Re}\, l = \varphi_1 - k $ with $ k(x) \simeq \mathrm{dist}(x, E) $ in $ G $, a neighbourhood of $ E $ with non-empty interior in $ \R^n $, and $ b : \tilde{\Omega} \times N \times \Sigma \longrightarrow \C $ is twice continuously differentiable in $ \tilde{\Omega} $ and $ \mathrm{supp}\, b \subset G $.
\end{prop}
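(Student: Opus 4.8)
The plan is to follow the complex geometrical optics construction of Kenig--Sj\"ostrand--Uhlmann \cite{KSU} based on a limiting Carleman weight, augmented as in \cite{Ch} by a boundary corrector forcing the trace to vanish on $ E $. First I would note that, for fixed $ y\in N $ and $ \theta\in\Sigma $, the phase $ \phi_1+i\psi_1=-(\phi_2+i\psi_2)=\log|x-y|-i\,d_{\Sph^{n-1}}\!\big(\tfrac{x-y}{|x-y|},\theta\big) $ is a limiting Carleman weight on $ \tilde\Omega $: one has $ |\nabla\phi_1|^2=|\nabla\psi_1|^2 $ and $ \nabla\phi_1\cdot\nabla\psi_1=0 $, so the conjugated operator $ L_\tau:=\e^{-\tau(\phi_1+i\psi_1)}(-\Delta+q_1)\e^{\tau(\phi_1+i\psi_1)} $ has no $ \tau^2 $ term. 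I would then pick the amplitude $ a_1 $ to annihilate the $ \tau^1 $ term, i.e. so that $ a_1 $ solves the transport equation $ (\nabla\phi_1+i\nabla\psi_1)\cdot\nabla a_1+\tfrac12(\Delta\phi_1+i\Delta\psi_1)a_1=0 $ along the complex characteristics; transporting the datum $ 1 $ gives exactly the stated formula for $ a_1 $, and then $ \e^{-\tau(\phi_1+i\psi_1)}(-\Delta+q_1)\big(\e^{\tau(\phi_1+i\psi_1)}a_1\big)=-\Delta a_1+q_1a_1 $ is bounded in $ L^2(\Omega) $ uniformly in $ \tau\ge1 $ and in $ y,\theta $, using only $ q_1\in L^\infty $ and the boundedness of $ a_1 $ on $ \tilde\Omega $.

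The new ingredient is the corrector $ \e^{\tau l}b $. I would construct $ l $ smooth on $ \tilde\Omega\times N\times\Sigma $ agreeing with $ \phi_1+i\psi_1 $ on $ E $, with $ \re l=\phi_1-k $ where $ k\ge0 $ and $ k(x)\simeq\mathrm{dist}(x,E) $ on a small neighbourhood $ G $ of $ E $, and such that the complex eikonal defect $ (\nabla l)^2 $ vanishes to second order along $ E $; then I would choose $ b\in C^2 $ with $ \supp b\subset G $ and $ b|_E=a_1|_E $. Since $ \big|\e^{\tau(l-\phi_1-i\psi_1)}\big|=\e^{-\tau k} $ and $ (\nabla l)^2=O(k^2) $, the leading term of $ \e^{-\tau(\phi_1+i\psi_1)}(-\Delta+q_1)\big(\e^{\tau l}b\big) $ is of size $ O(\tau^2k^2\e^{-\tau k})=O(1) $ near $ E $ and exponentially small away from $ E $; hence this quantity is also bounded in $ L^2(\Omega) $ uniformly in $ \tau $.

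I would then look for $ v_1 $ of the form $ v_1=\e^{\tau(\phi_1+i\psi_1)}(a_1+r_1)-\e^{\tau l}b $. Since $ l=\phi_1+i\psi_1 $ and $ b|_E=a_1|_E $ on $ E $, the main terms cancel there, so $ v_1|_E=\e^{\tau(\phi_1+i\psi_1)}\,r_1|_E $, and the equation $ (-\Delta+q_1)v_1=0 $ with $ v_1|_E=0 $ reduces to
\[ L_\tau r_1=-(-\Delta a_1+q_1a_1)+\e^{-\tau(\phi_1+i\psi_1)}(-\Delta+q_1)\big(\e^{\tau l}b\big)\ \text{ in }\Omega,\qquad r_1|_E=0, \]
whose right-hand side is bounded in $ L^2(\Omega) $ by the two previous steps. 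It remains to solve this boundary value problem with the gain that $ \tau\norm{r_1}{}{L^2(\Omega)}+\norm{\nabla r_1}{}{L^2(\Omega)^n} $ is controlled by a constant times the $ L^2(\Omega) $ norm of the source. This is precisely the solvability statement produced, via the Hahn--Banach argument of \cite{KSU}, by the boundary Carleman estimate for $ -\Delta+q_1 $ with the (suitably convexified) weight $ \e^{-\tau\phi_1} $: the surviving boundary term is supported on $ \partial\Omega\setminus E\subset\overline B $, which lies in the shadowed region $ \{\partial_\nu\phi_1\ge0\} $ and so carries the favourable sign, while the Dirichlet condition is imposed on $ E\subset\partial\Omega_-(y)\setminus Z(y) $, where $ \partial_\nu\phi_1 $ is strictly negative; the hypothesis $ E\cap Z(y)=\varnothing $ (with $ Z(y)=\{x\in\partial\Omega:\langle x-y,\nu(x)\rangle=0\}=\{\partial_\nu\phi_1=0\} $) is exactly what makes the estimate on $ E $ non-degenerate. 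This produces $ r_1\in H^1(\Omega) $ with a bound depending only on $ \norm{q_1}{}{L^\infty(\Omega)} $, on $ \Omega $, and on $ \mathrm{dist}(N,H) $.

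The main obstacle I anticipate is the construction of the phase $ l $ in the second step: one must solve, to high enough order along $ E $, the complex eikonal equation $ (\nabla l)^2=0 $ subject to $ l|_E=\phi_1+i\psi_1 $, while at the same time forcing $ \re l=\phi_1-k $ with $ k $ comparable to $ \mathrm{dist}(\cdot,E) $ and keeping $ k $ and $ b $ supported in a set $ G $ with non-empty interior. This is where the position of $ E $ relative to the level sets of $ \phi_1 $ enters, it is carried out in \cite{Ch}, and it dictates the smoothness and support assertions about $ l $ and $ b $ in the statement.
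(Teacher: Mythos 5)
You should first be aware that the paper contains no proof of this statement: it is quoted verbatim as Proposition 7.2 of Chung \cite{Ch} (just as the preceding proposition for $v_2$ is quoted from \cite{KSU} via Lemma 3.4 of \cite{DKSU1}), so the only meaningful comparison is with Chung's construction, not with an argument in this paper. Your sketch does reproduce the correct architecture of that construction (limiting Carleman weight $\pm(\log|x-y|+i\,d_{\Sph^{n-1}})$, transported amplitude, a corrector $\e^{\tau l}b$ concentrated near $E$ to kill the trace, remainder obtained from a Carleman estimate), and your eikonal computations for $\phi_1,\psi_1$ and the identification of $a_1$ as the $g\equiv 1$ amplitude are fine.

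However, two steps would fail as written. First, your bound on the conjugated corrector only addresses the $\tau^2(\nabla l)^2$ term. The first-order term $\tau\,\e^{-\tau k}\bigl(2\nabla l\cdot\nabla b+(\Delta l)\,b\bigr)$ is of size $\tau$ on $E$, and since $|\{x\in\Omega: k(x)\le t\}|\simeq t$ for $k\simeq \mathrm{dist}(\cdot,E)$ with $E\subset\partial\Omega$, its $L^2(\Omega)$ norm is of order $\tau^{1/2}$, not $O(1)$, unless $b$ is also chosen to satisfy a transport-type condition so that this expression vanishes to suitable order along $E$. With only $b|_E=a_1|_E$ and $\supp b\subset G$, your source is $O(\tau^{1/2})$ and the claimed bound $\tau\|r_1\|_{L^2}+\|\nabla r_1\|_{L^2}\le C$ does not follow. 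Second, the solvability step ``solve $L_\tau r_1=f$ with $r_1|_E=0$ and $\tau\|r_1\|_{L^2}+\|\nabla r_1\|_{L^2}\lesssim\|f\|_{L^2}$'' is not what the Hahn--Banach duality based on the boundary Carleman estimate quoted in the paper produces: that argument yields existence of a remainder with \emph{no} control on its boundary trace. Being able to prescribe vanishing Dirichlet data on $E$ with the stated gain (and to handle the strip near the equator $Z(y)$, which $\overline B$ does cross, so $\partial\Omega\setminus E$ is not contained in $\{\partial_\nu\phi_1\ge 0\}$ as you assert) requires Chung's refined Carleman estimates with boundary terms and the associated functional-analytic argument; together with the construction of $l$, which you explicitly defer, this is precisely the substantive content of \cite{Ch}. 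So as an independent proof the proposal has genuine gaps, though as a guide to where the result comes from it is essentially aligned with the source the paper itself cites.
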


  Let us remark recall the definition of the 2-plane transform. Given $y\in \R^n$, and $\theta$ and $\eta$ unitary orthogonal vectors, we denote
\[ Rq (y, \theta, \eta) := \int_{\R \times \R} q(y + t \theta + r \eta) \, \dd t \, \dd r. \]
This is just the integral of $ q $ in the plane $\{ y \} + [ \theta, \eta ] $, where $[ \theta, \eta ]$ denotes the plane  spanned by $\theta$ and $\eta$. Notice that there is  some redundancy on variables, since it is  enough  to  define the  above  for  $y\in [ \theta, \eta ]^\perp $,  see \cite {So}.

  We assume that $N $ contains the ball $B(0,\alpha)$ and  that   $ \norm{\Lambda_{q_1} - \Lambda_{q_2}}{}{} \leq 1 .$ 
  
\begin{thm}\label{twoplanes} The following estimate holds
\begin{multline}\label{2-plane1}
\sup_{y\in B(0,\alpha) , \theta \in \Sigma} \|Rq( y,  \theta ,\eta)\|_{H^{-2}(S_\theta)}\\ \leq  C \left( \tau^{-1/4} \norm{q}{1/2}{L^\infty(\R^n)} +  \e^{c\tau} \norm{\Lambda_{q_1} - \Lambda_{q_2}}{1/4}{B\to F} \right),
\end{multline}
where $ S_\theta =  \Sph^{n - 1}\cap {\theta}^\perp  $ and we consider  the  measure $ \dd \sigma $, the volume form on $ S_\theta $ associated to the canonical metric on $ S_\theta $.
\end{thm}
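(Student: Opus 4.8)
The plan is to test the interior identity \eqref{eq:SCHin-out} against the complex geometric optics solutions $v_1(\tau),v_2(\tau)$ constructed above (the former, from \cite{Ch}, vanishing on $E$; the latter, from \cite{KSU}, carrying an arbitrary smooth amplitude $g$), to control its left-hand side through \eqref{es:SCHfromBOUNDARYtoINTERIOR}, and to read off a fixed multiple of the two-plane transform $Rq$ in the leading part of its right-hand side. Fix $y\in B(0,\alpha)$ and $\theta\in\Sigma$. Since $Rq(y,\theta,\cdot)$ is a smooth even function on $S_\theta$ and smooth functions are dense in $H^2(S_\theta)$, it suffices to prove that, for every $g\in C^\infty(S_\theta)$ (which, $Rq$ being even in $\eta$, we may take even) and uniformly in $y\in B(0,\alpha)$, $\theta\in\Sigma$,
\[
\Big|\int_{S_\theta} g(\eta)\,Rq(y,\theta,\eta)\,\dd\sigma(\eta)\Big|\ \le\ C\Big(\tau^{-1/4}\norm{q}{1/2}{L^\infty(\R^n)}+\e^{c\tau}\norm{\Lambda_{q_1}-\Lambda_{q_2}}{1/4}{B\to F}\Big)\norm{g}{}{H^2(S_\theta)},
\]
because \eqref{2-plane1} then follows by taking the supremum over $\norm{g}{}{H^2(S_\theta)}\le1$; the exponent $2$ on the left is the one forced by the amplitude $a_2$ and by the remainder bounds on $r_2$, which involve $\Delta_{\Sph^{n-2}}g$. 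Plugging $v_1,v_2$ (the latter built with this $g$, so that $\supp v_1|_{\partial\Omega}\subset\overline B$) into \eqref{eq:SCHin-out} and using $\phi_1=-\phi_2$, $\psi_1=-\psi_2$ so that the leading exponentials cancel in $v_1v_2$, one obtains $\int_\Omega q\,v_1v_2\,\dd x=\int_\Omega q\,a_1a_2\,\dd x+\mathcal E$, where $\mathcal E$ gathers the contributions of $r_1$, $r_2$ and of the extra summand $-\e^{\tau l}b$ of $v_1$.

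In the main term, $a_1a_2$ depends on $x$ only through $\xi:=(x-y)-\langle\theta,x-y\rangle\theta\in\theta^\perp$ and equals $(2|\xi|)^{-(n-2)}g(\xi/|\xi|)$; writing $x=y+t\theta+\xi$ and passing to polar coordinates $\xi=\rho\eta$, $\eta\in S_\theta$, in the $(n-1)$-plane $\theta^\perp$, the Jacobian $\rho^{n-2}$ cancels $(2|\xi|)^{-(n-2)}$ exactly, and integrating in $t$ and $\rho$ (using that $g$ and $Rq$ are even in $\eta$) yields $\int_\Omega q\,a_1a_2\,\dd x=c_n\int_{S_\theta}g(\eta)\,Rq(y,\theta,\eta)\,\dd\sigma(\eta)$ with $c_n=2^{-(n-1)}$. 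The error $\mathcal E$ is treated by Cauchy--Schwarz together with the decay bounds on $r_1,r_2$ from the two Propositions and the fact that $\mathrm{Re}(l+\phi_2)=-k\le0$ with $k(x)\simeq\mathrm{dist}(x,E)$ and $\supp b\subset G$, whence $\int_{\Omega\cap G}\e^{-\tau k}\,\dd x=O(\tau^{-1})$: one gets $|\mathcal E|\le C\tau^{-1/2}\norm{q}{}{L^\infty(\R^n)}\norm{g}{}{H^2(S_\theta)}$. For the left-hand side of \eqref{eq:SCHin-out}, estimate \eqref{es:SCHfromBOUNDARYtoINTERIOR} splits it into a Dirichlet-to-Neumann contribution, which is $\le C\norm{\Lambda_{q_1}-\Lambda_{q_2}}{}{B\to F}\norm{v_1}{}{H^{1/2}_0(B)}\norm{v_2}{}{H^{1/2}(\partial\Omega)}\le C\e^{c\tau}\norm{\Lambda_{q_1}-\Lambda_{q_2}}{}{B\to F}\norm{g}{}{H^2(S_\theta)}$ (the weights $\e^{\pm\tau\log|x-y|}$ and their tangential derivatives on $\partial\Omega$ being bounded by $C\tau^N\e^{c\tau}$, as $y$ stays away from $\ch(\Omega)$), and a boundary contribution, treated next.

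The delicate point, and the main obstacle, is the boundary term $\int_{\partial\Omega\setminus F_\varepsilon}|\partial_\nu(v_1-w_2)|\,|v_2|\,\dd A$ in \eqref{es:SCHfromBOUNDARYtoINTERIOR}. With $u:=v_1-w_2$ one has $u|_{\partial\Omega}=0$ and $(-\Delta+q_2)u=-qv_1$ in $\Omega$. The plan is to invoke a boundary Carleman estimate of the type recalled earlier (cf.\ \cite{BU}, \cite{KSU}, \cite{Ch}) with the limiting Carleman weight $\varphi=\log|x-y|$, i.e.\ the one conjugate to the phase of $v_1$: on $\partial\Omega\setminus F_\varepsilon$, which lies in the shadowed face, the boundary term of the Carleman estimate carries the favourable sign, while on the illuminated face the unfavourable contribution is absorbed by the vanishing of $v_1$, hence of $u$, on $E$ --- which is precisely why the solution $v_1$ of \cite{Ch} is designed to vanish there. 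This should give
\[
\norm{\e^{-\tau\varphi}\partial_\nu u}{}{L^2(\partial\Omega\setminus F_\varepsilon)}\ \le\ C\tau^{-1/2}\norm{\e^{-\tau\varphi}qv_1}{}{L^2(\Omega)}\ \le\ C\tau^{-1/2}\norm{q}{}{L^\infty(\R^n)},
\]
since $\e^{-\tau\varphi}v_1=\e^{i\tau\psi_1}(a_1+r_1)-\e^{\tau(l-\varphi)}b$ is bounded in $L^2(\Omega)$ (again because $\mathrm{Re}(l-\varphi)=-k\le0$). As $v_2=\e^{-\tau\varphi}(a_2+r_2)$ up to a unimodular factor, with $\norm{a_2+r_2}{}{L^2(\partial\Omega)}\le C\norm{g}{}{H^2(S_\theta)}$, the weights $\e^{\mp\tau\varphi}$ cancel and Cauchy--Schwarz bounds the boundary term by $C\tau^{-1/2}\norm{q}{}{L^\infty(\R^n)}\norm{g}{}{H^2(S_\theta)}$, \emph{with no factor $\e^{c\tau}$ in front of $\norm{q}{}{L^\infty}$}; ensuring this cancellation --- matching the Carleman weight to the phases of $v_1$ and $v_2$ on the shadowed part of the boundary, and keeping track of the sign of the boundary Carleman term --- is the step that requires the most care.

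Collecting the estimates, $|\int_{S_\theta}g\,Rq(y,\theta,\cdot)\,\dd\sigma|\le C(\tau^{-1/2}\norm{q}{}{L^\infty(\R^n)}+\e^{c\tau}\norm{\Lambda_{q_1}-\Lambda_{q_2}}{}{B\to F})\norm{g}{}{H^2(S_\theta)}$, uniformly in $y\in B(0,\alpha)$ and $\theta\in\Sigma$. Finally, using the normalisation $\norm{\Lambda_{q_1}-\Lambda_{q_2}}{}{B\to F}\le1$, so that $\norm{\Lambda_{q_1}-\Lambda_{q_2}}{}{B\to F}\le\norm{\Lambda_{q_1}-\Lambda_{q_2}}{1/4}{B\to F}$, and the elementary bound $\tau^{-1/2}\norm{q}{}{L^\infty}\le\tau^{-1/4}\norm{q}{1/2}{L^\infty}$ valid for $\tau$ large relative to the a priori bound on $\norm{q}{}{L^\infty}$, we arrive at \eqref{2-plane1}.
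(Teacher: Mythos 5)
Your overall strategy coincides with the paper's: pair \eqref{eq:SCHin-out} with Chung's solution $v_1$ (vanishing on $E$, so that $\supp v_1|_{\partial\Omega}\subset\overline B$) and the Kenig--Sj\"ostrand--Uhlmann solution $v_2$ carrying the amplitude $g$, recover $\int_{S_\theta}g\,Rq(y,\theta,\cdot)\,\dd\sigma$ from $a_1a_2$ in polar coordinates on $\theta^\perp$, absorb the remainders using $\|\e^{-\tau k}\|_{L^2(\Omega\cap G)}=O(\tau^{-1/2})$, and isolate the boundary term $\int_{\partial\Omega\setminus F_\varepsilon}|\partial_\nu(v_1-w_2)|\,|v_2|\,\dd A$ via \eqref{es:SCHfromBOUNDARYtoINTERIOR}. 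The gap is exactly at the step you flag as delicate. The Carleman estimate with weight $\phi_2$ controls $\partial_\nu u$, $u=v_1-w_2$, on the shadowed set $\{\langle x-y,\nu(x)\rangle\ge\varepsilon\}$ only modulo the illuminated-face contribution $-\tau\int_{\partial\Omega_-(y)}\langle x-y,\nu\rangle\,|\e^{-\tau\phi_2}\partial_\nu u|^2\,\dd A$, which appears on the majorizing side, is nonnegative, and must be estimated rather than discarded. Your claim that it is ``absorbed by the vanishing of $v_1$, hence of $u$, on $E$'' does not work: $u$ vanishes on \emph{all} of $\partial\Omega$ anyway (since $w_2|_{\partial\Omega}=v_1|_{\partial\Omega}$), but what enters the Carleman boundary term is the \emph{normal derivative} $\partial_\nu u$, which has no reason to vanish on $E$ or anywhere on $\partial\Omega_-(y)$; the role of the vanishing of $v_1$ on $E$ is only to localize the Dirichlet datum in $\overline B$. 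Hence your asserted bound $C\tau^{-1/2}\|q\|_{L^\infty}\|g\|_{H^2(S_\theta)}$ for the boundary term, ``with no factor $\e^{c\tau}$'' and no measurement contribution, is unsubstantiated.

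The paper closes this gap with a second duality argument: it introduces the auxiliary solution $\tilde w_2$ of $(-\Delta+q_2)\tilde w_2=0$ with boundary value $\chi\,\overline{\partial_\nu(v_1-w_2)|_{\partial\Omega}}$ supported in $F$, identifies $\|\partial_\nu(v_1-w_2)\|^2_{L^2(\partial\Omega_-(y))}$ with the pairing $\duality{(\Lambda_{q_1}-\Lambda_{q_2})(v_1|_{\partial\Omega})}{\tilde w_2|_{\partial\Omega}}$, and deduces $\|\partial_\nu(v_1-w_2)\|_{L^2(\partial\Omega_-(y))}\le C\,\e^{c\tau}\|\Lambda_{q_1}-\Lambda_{q_2}\|^{1/2}_{B\to F}\|q\|_{L^\infty}$. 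Feeding this back into the Carleman estimate is precisely what produces the exponents $1/4$ in \eqref{2-plane1} (a square root from the Cauchy--Schwarz treatment of the boundary term combined with the $1/2$ power of the measurement); your version would give stronger powers only because the illuminated-face term has been silently dropped. To repair your argument you must add this duality step, or some equivalent control of $\partial_\nu(v_1-w_2)$ on $\partial\Omega_-(y)$ in terms of $\|\Lambda_{q_1}-\Lambda_{q_2}\|_{B\to F}$; the rest of your proposal (the polar-coordinate identification of the main term, the $O(\tau^{-1/2})$ remainders, the $\e^{c\tau}$ bound on the DN-pairing term, and the final elementary manipulations under the normalisations $\|\Lambda_{q_1}-\Lambda_{q_2}\|_{B\to F}\le 1$ and $\tau$ large) matches the paper.
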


\begin{proof}

We next plug   in (\ref{eq:SCHin-out}) the solutions in the above Propositions and bound by below the absolute value of the term in the right hand side of this identity:
\begin{multline*}
\left| \int_{\Omega} q \, v_1 v_2 \, \dd x \right| \geq  \left| \int_{\tilde{\Omega}} q\, a_1 a_2 \, \dd x \right| \\ - \norm{q}{}{L^\infty(\R^n)} 
 \times \Big(\norm{b}{}{L^\infty(\Omega \cap G)} \left( \norm{a_2}{}{L^2(\Omega)} + \norm{r_2}{}{L^2(\Omega)} \right) 
\norm{\e^{- \tau k}}{}{L^2(\Omega \cap G)} \\   + \norm{a_2}{}{L^2(\Omega)} \norm{r_1}{}{L^2(\Omega)}
 + \norm{a_1}{}{L^2(\Omega)} \norm{r_2}{}{L^2(\Omega)} + \norm{r_1}{}{L^2(\Omega)} \norm{r_2}{}{L^2(\Omega)}\Big).
\end{multline*}
The last inequality, identity (\ref{eq:SCHin-out}), the properties of the solutions $ v_1 $ and $ v_2 $ and (\ref{es:SCHfromBOUNDARYtoINTERIOR}) imply that
\begin{multline}
\left| \int_{\tilde{\Omega}} q\, a_1 a_2 \, \dd x \right| \leq \frac{C}{\tau^{1/2}} \norm{q}{}{L^\infty(\R^n)} \left( \norm{g}{}{L^2(\Sph^{n - 2})} + \norm{\Delta_{\Sph^{n - 2}} g}{}{L^2(\Sph^{n - 2})} \right) \\ 
\label{nuevo}
+ \int_{\partial \Omega \setminus F_\varepsilon} |\partial_\nu( v_1 - w_2 )| |v_2| \, \dd A + C 
\norm{\Lambda_{q_1} - \Lambda_{q_2}}{}{} \e^{c \tau} \norm{g}{}{H^1(\Sph^{n - 2})}.
\end{multline}
Mind that $ \tau^{-1/2} $ comes from $ \|\e^{- \tau k} \|_{L^2(\Omega \cap G)} $, dependences of $ C $ has not changed and $ c $ only depends on $ \Omega $.

We are next going to estimate the boundary integral term in \eqref{nuevo}. In order to do so, we are going to choose $ F_\varepsilon $ in such a way that $ \partial \Omega \setminus F_\varepsilon = \{ x \in \partial \Omega : (x - y) \cdot \nu(x) \geq \varepsilon \} $ with $ y \in N $. Thus,
\begin{align*}
\int_{\partial \Omega \setminus F_\varepsilon} |\partial_\nu( v_1 - w_2 )| |v_2| \, \dd A 
&= \int_{\partial \Omega \setminus F_\varepsilon} \e^{\tau \phi_2} |\partial_\nu( v_1 - w_2 )| |a_2 + r_2| \, \dd A \\
&\leq C \left( \int_{\partial \Omega \setminus F_\varepsilon} \e^{2 \tau \phi_2} |\partial_\nu( v_1 - w_2 )|^2 \, \dd A \right)^{\frac{1}{2}} 
\\ &\qquad \times\left( \norm{g}{}{H^1(\Sph^{n - 2})} + \norm{\Delta_{\Sph^{n - 2}} g}{}{L^2(\Sph^{n - 2})} \right)  \\
&\leq C \left( \frac{1}{\varepsilon} \int_{\partial \Omega \setminus F_\varepsilon}\langle\nu , x - y\rangle \, 
\e^{2 \tau \phi_2} |\partial_\nu( v_1 - w_2 )|^2 \, \dd A\right)^{\frac{1}{2}}  \\
&\qquad \times \left( \norm{g}{}{H^1(\Sph^{n - 2})} + \norm{\Delta_{\Sph^{n - 2}} g}{}{L^2(\Sph^{n - 2})} \right) %\label{es:SCHCarlemanBOUterm}.
\end{align*}
We now focus on the integral boundary term on the last inequality. Note that
\begin{equation}
(-\Delta + q_2) (v_1 - w_2) = - q|_{\Omega} v_1 \label{eq:SCHSchDIFF}
\end{equation}
with $ (v_1 - w_2)|_{\partial \Omega} = 0 $. We are going to use the Carleman estimate with boundary terms proved in \cite{KSU} 
\begin{prop} 
 Let   $q_2\in L^\infty(\Omega)$, there exist  $\tau_0>0$ and $C>0$  such that for all  $u\in \mathcal{C}^\infty(\Omega)$  , $u|_{\d\Omega}=0$  and $\tau>\tau_0$
\begin{multline*} C\tau^2\int_\Omega |\e^{-\tau \phi_2}u|^2 \, \dd x+ \tau \int_{\partial \Omega_+}\langle x-y,\nu(x)\rangle \e^{-\tau \phi_2}\partial_\nu u|^2
\, \dd A \\ \leq \int_\Omega |\e^{-\tau \phi_2}(\Delta -q)u |^2 \, \dd x- \tau \int_{\partial \Omega_-}\langle x-y,\nu(x)\rangle |\e^{-\tau \phi_2}\partial_\nu u|^2 \, \dd A. 
\end{multline*}
\end{prop}
Then  we obtain,
\begin{align*}
\bigg( \frac{1}{\varepsilon} \int_{\partial \Omega \setminus F_\varepsilon}\langle\nu , (x &- y) \rangle\,  \e^{2 \tau \phi_2} |\partial_\nu( v_1 - w_2 )|^2 \, \dd A  \bigg)^{1/2}  \\
&\leq C \left( \tau^{-1/2} \norm{\e^{\tau \phi_2} q v_1}{}{L^2(\Omega)} + \e^{c \tau} \norm{\partial_\nu( v_1 - w_2 )}{}{L^2(\partial \Omega_-(y))} \right) \nonumber \\
&\leq C \left( \tau^{-1/2} \norm{q}{}{L^\infty(\R^n)} + \e^{c \tau} \norm{\partial_\nu( v_1 - w_2 )}{}{L^2(\partial \Omega_-(y))} \right) %\label{es:SCHCarlemanBOUterm1}
\end{align*}
Here the constant depends additionally on $ \varepsilon $ and the distance of $ N $ to $ \ch(\Omega) $. We finally look at the $ L^2 $-norm on the boundary. Let $ \tilde{w}_2 \in H^1(\Omega) $ denote the solution of (\ref{eq:SCHschEQ}) with $ j = 2 $ and the following boundary condition $ \tilde{w}_2|_{\partial \Omega} = \chi \overline{\partial_\nu( v_1 - w_2 )|_{\partial \Omega}} \in H^{1/2} (\partial \Omega) $. Then, one has
\begin{align*}
\|\partial_\nu( v_1 &- w_2 )\|_{L^2(\partial \Omega_-(y))} \\ &= \left( \int_{\partial \Omega} \partial_\nu( v_1 - w_2 ) \tilde{w}_2 \, \dd A \right)^{1/2} \\
&= \left( \int_\Omega \Delta( v_1 - w_2 ) \tilde{w}_2 \, \dd x + \int_\Omega\langle \nabla( v_1 - w_2 ), \nabla\tilde{w}_2 \rangle\, \dd x \right)^{1/2} \\
&= \left( \int_\Omega q v_1 \tilde{w}_2 \, \dd x + \int_\Omega q_2 (v_1 - w_2) \tilde{w}_2 + \langle\nabla( v_1 - w_2 ) , \nabla\tilde{w}_2 \rangle\, \dd x \right)^{1/2} \\
&= \left( \int_\Omega q v_1 \tilde{w}_2 \, \dd x \right)^{1/2}.
\end{align*}
Notice that the square of the last term appears in (\ref{eq:SCHin-out}) if we change $ v_2 $ by $ \tilde{w}_2 $. Thus,
\begin{equation*}
\norm{\partial_\nu( v_1 - w_2 )}{2}{L^2(\partial \Omega_-(y))} = \duality{(\Lambda_{q_1} - \Lambda_{q_2})(v_1|_{\partial \Omega})}{\tilde{w}_2|_{\partial \Omega}}.
\end{equation*}
Taking into account that $ \mathrm{supp}\, \tilde{w}_2|_{\partial \Omega} \subset F $ one has
\begin{align*}
\|\partial_\nu( v_1 &- w_2 )\|_{L^2(\partial \Omega_-(y))} \\ &\leq \left( C \norm{\Lambda_{q_1} - \Lambda_{q_2}}{}{} \norm{v_1}{}{H^1(\Omega)} \norm{\tilde{w}_2}{}{H^1(\Omega)} \right)^{1/2} \\
&\leq \left( C \norm{\Lambda_{q_1} - \Lambda_{q_2}}{}{} \norm{v_1}{}{H^1(\Omega)} \norm{\partial_\nu( v_1 - w_2 )}{}{H^{1/2}(\partial \Omega)} \right)^{1/2} \\
&\leq C \norm{\Lambda_{q_1} - \Lambda_{q_2}}{1/2}{} \e^{c\tau} \left( \norm{ v_1 - w_2 }{}{L^2(\Omega)} + \norm{ \Delta(v_1 - w_2) }{}{L^2(\Omega)} \right)^{1/2}.
\end{align*}
In order to bound $ \norm{\partial_\nu( v_1 - w_2 )}{}{H^{1/2}(\partial \Omega)} $ in the last inequality, see \cite{BU}. Since $ v_1 - w_2 $ is solution of the problem (\ref{eq:SCHSchDIFF}) we can bound as follows
\begin{equation*}
\norm{\partial_\nu( v_1 - w_2 )}{}{L^2(\partial \Omega_-(y))} \leq C \norm{\Lambda_{q_1} - \Lambda_{q_2}}{1/2}{} \e^{c\tau} \norm{q}{}{L^\infty(\R^n)},
\end{equation*}
where the constant $ C $ depends   on $ \norm{q_j}{}{L^\infty(\Omega)} $ with $ j \in \{ 1, 2 \} $. Summing up,
\begin{gather*}
\int_{\partial \Omega \setminus F_\varepsilon} |\partial_\nu( v_1 - w_2 )| |v_2| \, \dd A \leq C \left( \norm{g}{}{H^1(\Sph^{n - 2})} + \norm{\Delta_{\Sph^{n - 2}} g}{}{L^2(\Sph^{n - 2})} \right) \\
\times \left( \tau^{-1/2} \norm{q}{}{L^\infty(\R^n)} +  \e^{c\tau} \norm{\Lambda_{q_1} - \Lambda_{q_2}}{1/2}{} \norm{q}{}{L^\infty(\R^n)} \right)^{1/2}.
\end{gather*}
Therefore, we finally get
\begin{multline}
\left| \int_{\tilde{\Omega}} q\, a_1 a_2 \, \dd x \right| \leq \frac{C}{\tau^{1/2}} \norm{q}{}{L^\infty(\R^n)} \left( \norm{g}{}{L^2(\Sph^{n - 2})} + \norm{\Delta_{\Sph^{n - 2}} g}{}{L^2(\Sph^{n - 2})} \right)  \\
+ C \left( \tau^{-1/2} \norm{q}{}{L^\infty(\R^n)} +  \e^{c\tau} \norm{\Lambda_{q_1} - \Lambda_{q_2}}{1/2}{} \norm{q}{}{L^\infty(\R^n)} \right)^{1/2} \label{es:boundaryMEAS} \\
\times \left( \norm{g}{}{H^1(\Sph^{n - 2})} + \norm{\Delta_{\Sph^{n - 2}} g}{}{L^2(\Sph^{n - 2})} \right)  \\ + C \norm{\Lambda_{q_1} - \Lambda_{q_2}}{}{} \e^{c \tau} \norm{g}{}{H^1(\Sph^{n - 2})}.
\end{multline}

In the following lines, we are going to relate the left hand side of the above estimate with the 2-plane transform of $ q $. Note that
\begin{equation*}
\int_{\tilde{\Omega}} q\, a_1 a_2 \, \dd x = 2^{-(n - 2)} \int_{\R \times \R \times S_\theta} q(y + t \theta + r \eta) g(\eta) \, \dd t \, \dd r \,\dd \sigma(\eta),
\end{equation*}
where $ S_\theta = \{ \eta \in \Sph^{n - 1} : \langle \eta,\theta \rangle= 0 \} $ and $ \dd \sigma $ is the volume form on $ S_\theta $ associated to euclidean metric restricted to $ S_\theta $. Note that given $ y \in N $ and $ \theta \in \Sigma $,
one has
\begin{multline*}
\left| \int_{S_\theta} Rq (y, \theta, \eta) g(\eta) \, \dd \sigma(\eta) \right| \leq C \left( \tau^{-1/4} \norm{q}{1/2}{L^\infty(\R^n)} +  \e^{c\tau} \norm{\Lambda_{q_1} - \Lambda_{q_2}}{1/4}{} \right) \\
\times \left( \norm{g}{}{H^1(\Sph^{n - 2})} + \norm{\Delta_{\Sph^{n - 2}} g}{}{L^2(\Sph^{n - 2})} \right),
\end{multline*}
for all $ y \in N $ and $ \theta \in \Sigma $. Obviously,
\[ \norm{Rq (y, \theta, \cdot)}{}{H^{-2}(S_\theta)} \leq C \left( \tau^{-1/4} \norm{q}{1/2}{L^\infty(\R^n)} +  \e^{c\tau} \norm{\Lambda_{q_1} - \Lambda_{q_2}}{1/4}{} \right) \]
for all $ y \in N $ and $ \theta \in \Sigma $. 
\end{proof}

We would like to have an expression   similar  to (\ref{2-plane1}), with a norm of the Radon transform $\Rad q$ in  the left hand side, suitable to apply  the stability result  in section 2.  This can be  achieved in the  three dimensional case, since  the 2-plane  transform $ Rq (y, \theta, \eta)$, the integral  of $ q $ in the plane $ P = \{ y \} + [ \theta, \eta ] $, is  a reparametrisation of the Radon transform. 

 From now on, we will restrict our analysis to dimension $ n = 3 $.
As we pointed out before, the variable $y$ in (\ref{2-plane1}) is  redundant,  since    the natural  coordinates of the 2-plane transform are $(y, P)$ where $P= [ \theta, \eta ] $ and $y\in P ^\perp$. The variable $y$ in (\ref{2-plane1}) can not be restricted to $P^\perp$, due to the $\eta$-integral involved in the partial Sobolev norm.

  We have  to make  sense    of the integral in (\ref{2-plane1}) in the  appropriate coordinates  of the Radon transform given by $(s,\omega)\in \R \times \mathbb{S}^2$. We can write poinwise  $Rq(y, \theta, \eta)= \Rad q(s,\omega ) $, where $s= \langle y, \omega\rangle $ and
  $\omega= \theta\times\eta$ (vector product).  
   To simplify  the notation, we will assume that $\Sigma=\Sigma_{4\delta}$ is bounded  by the two planes parallel to $H$ at distance $4\delta $ of the origin. 
   
  We introduce geodesic polar coordinates on $\mathbb{S}^2$ in the following way: given $\omega_0\in \mathbb{S}^2$ we can find $\theta_0\in \Sigma_{\delta}$  and $\eta_0$ so that they form an orthonormal frame.

  We will denote 
  $\eta_0=e_1$, $\theta_0=e_2$ and $\omega_0=e_3$.
  Let   
  $\theta = \theta (\phi)=\cos \phi e_2+ \sin\phi e_1$ with $|\phi| <\delta$. 
   Let $\eta$ run 
  the  geodesic $S_\theta$  according to $\psi= dist(\eta, e_3)$,  we have 
  $\eta=\eta(\phi, \psi)= \cos\phi \sin \psi e_1- \sin \phi\sin\psi e_2+ \cos\psi e_3$.  We will perform the  following calculations on the time  zone 
  $$\Gamma= \Gamma_{\omega_0, \theta_0} (\delta) = 
  \{\eta(\phi, \psi): | \phi|<\delta,  0<|\psi|<\pi\}.$$
  We will need to write $Rq(y, \theta, \eta)$ in the standard coordinates 
  $\Rad q (s, \omega(\phi, \psi))$, where $\omega(\phi, \psi)=\theta(\phi)\times\eta(\phi, \psi)$. For a fix $\phi$ the map $\eta\to \omega$ is  just  a rotation of angle $\pi/2$ on the geodesic $S_\theta$, hence
  $\omega(\phi, \psi)= \eta(\phi, \psi-  \pi/2)$. As a map from  $\Gamma$ to $\Gamma$ this rotation collapses  the segments 
  $\psi= \pm \pi/2$ to the points  $\pm \omega_0$. We then  restrict the time  zone to 
  \begin{equation}\label{tz}\tilde \Gamma  =\tilde \Gamma_{\omega_0, \theta_0} (\delta)=
  \{\eta(\phi, \psi): | \phi|<\delta, \psi \in J\},
  \end{equation}
  where $J=\{ \delta/8<|\psi|<\pi/2-\delta/8\}\cup\{\pi/2+ \delta/8<|\psi|<\pi-\delta/8\}$.
  
  In these coordinates  we have:
  
  \begin{lem}(Local estimate, $n=3$)  Let $B(0,\alpha)\subset N$. Then
 \begin{multline}
    \int _{ B(0, \alpha)}\int _{-\delta}^\delta \left( \int_J |Rq(y, \theta, \eta)|^{6/5} \, \dd\psi\right)^{5/6}\, \dd\phi \,\dd y \leq \\
   C \left( \tau^{-1/4} \norm{q}{1/2}{L^\infty(\R^n)} +  \e^{c\tau} \norm{\Lambda_{q_1} - \Lambda_{q_2}}{1/4}{} \right)^{1/3},\label{X-ray}
   \end{multline} 
   where $C$ only depends on $\delta$  and $\alpha$ .
  \end{lem}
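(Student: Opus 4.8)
The plan is to interpolate the weak estimate of Theorem~\ref{twoplanes} against the smoothing property of the Radon transform; the exponent $1/3$ in \eqref{X-ray} is exactly the interpolation exponent placing $L^{2}$ between $H^{-2}$ and $H^{1}$, since $0=\tfrac13(-2)+\tfrac23\cdot 1$. Denote by $\epsilon$ the right-hand side of \eqref{2-plane1} with $n=3$, i.e.\ $\epsilon=C(\tau^{-1/4}\norm{q}{1/2}{L^\infty(\R^n)}+\e^{c\tau}\norm{\Lambda_{q_1}-\Lambda_{q_2}}{1/4}{})$ with $C$ as in Theorem~\ref{twoplanes}, so that $\|Rq(y,\theta,\cdot)\|_{H^{-2}(S_\theta)}\le\epsilon$ for $y\in B(0,\alpha)$ and $\theta\in\Sigma$. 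For such $y$ and for $\phi\in(-\delta,\delta)$ one has $\theta(\phi)\in\Sigma=\Sigma_{4\delta}$, and since $\psi\mapsto\eta(\phi,\psi)$ is a unit-speed parametrization of the great circle $S_{\theta(\phi)}$, the function $\psi\mapsto Rq(y,\theta(\phi),\eta(\phi,\psi))$ is just $Rq(y,\theta(\phi),\cdot)$ read in that parametrization; restricting the $H^{-2}$ bound on the circle to the open set $J$ yields
\[
\big\|Rq(y,\theta(\phi),\eta(\phi,\cdot))\big\|_{H^{-2}(J)}\le\epsilon,\qquad y\in B(0,\alpha),\ \phi\in(-\delta,\delta).
\]

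For the high-regularity input I would use that $q=(q_1-q_2)\mathbf 1_\Omega\in L^2(\R^3)$ together with the Sobolev smoothing of the Radon transform in dimension three (see \cite{Nat}): $\Rad q\in H^1(\R\times\Sph^2)$ with $\|\Rad q\|_{H^1(\R\times\Sph^2)}\le C(\Omega)\norm{q}{}{L^2}$. Writing, as in the text, $Rq(y,\theta(\phi),\eta(\phi,\psi))=\Rad q\big(\langle y,\omega(\phi,\psi)\rangle,\omega(\phi,\psi)\big)$ with $\omega(\phi,\psi)=\theta(\phi)\times\eta(\phi,\psi)$, differentiating in $\psi$ and using $|\partial_\psi\langle y,\omega(\phi,\psi)\rangle|\le|y|\le\alpha$ and $|\partial_\psi\omega(\phi,\psi)|=1$ bounds $|\partial_\psi Rq(y,\theta(\phi),\eta(\phi,\psi))|^{2}$ by a constant multiple of $\big(|\partial_s\Rad q|^2+|\nabla_\omega\Rad q|^2\big)$ evaluated at $\big(\langle y,\omega(\phi,\psi)\rangle,\omega(\phi,\psi)\big)$. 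The Jacobian of $(\phi,\psi)\mapsto\omega(\phi,\psi)$ on $\Sph^2$ equals $|\cos\psi|$, which on $J$ is bounded below by a positive constant depending only on $\delta$ — this is exactly why $J$ avoids neighbourhoods of $\psi=\pm\pi/2$, where the rotation $\eta\mapsto\theta\times\eta$ collapses — while the integration in the redundant variable $y$ only contributes the bounded factor $\big|\{\,y'\in\omega^\perp:\ s\omega+y'\in B(0,\alpha)\,\}\big|=\pi(\alpha^2-s^2)\le\pi\alpha^2$. Hence, for every $g\ge0$,
\[
\int_{B(0,\alpha)}\int_{-\delta}^{\delta}\int_J g\big(\langle y,\omega(\phi,\psi)\rangle,\omega(\phi,\psi)\big)\,\dd\psi\,\dd\phi\,\dd y\le C(\alpha,\delta)\int_{\R\times\Sph^2}g,
\]
and applying this with $g=|\Rad q|^2$ and with $g=|\partial_s\Rad q|^2+|\nabla_\omega\Rad q|^2$ gives
\[
\int_{B(0,\alpha)}\int_{-\delta}^{\delta}\big\|Rq(y,\theta(\phi),\eta(\phi,\cdot))\big\|_{H^1(J)}^2\,\dd\phi\,\dd y\le C(\alpha,\delta)\,\|\Rad q\|_{H^1(\R\times\Sph^2)}^2.
\]

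It remains to combine the two. The interpolation inequality $\|u\|_{L^2(J)}\le C(\delta)\|u\|_{H^{-2}(J)}^{1/3}\|u\|_{H^1(J)}^{2/3}$ on the fixed bounded open set $J$, the embedding $\|u\|_{L^{6/5}(J)}\le|J|^{1/3}\|u\|_{L^2(J)}$, and the uniform $H^{-2}$ bound above give, for a.e.\ $(y,\phi)\in B(0,\alpha)\times(-\delta,\delta)$,
\[
\big\|Rq(y,\theta(\phi),\eta(\phi,\cdot))\big\|_{L^{6/5}(J)}\le C(\delta)\,\epsilon^{1/3}\big\|Rq(y,\theta(\phi),\eta(\phi,\cdot))\big\|_{H^1(J)}^{2/3};
\]
integrating in $(y,\phi)$ and using H\"older's inequality to dominate $\int\|\cdot\|_{H^1(J)}^{2/3}\,\dd\phi\,\dd y$ by a power of $\int\|\cdot\|_{H^1(J)}^{2}\,\dd\phi\,\dd y$, which was just estimated, gives the right-hand side of \eqref{X-ray} after $\epsilon$ is made explicit and the factors depending on $\Omega$ and on the a priori bound $M$ (entering through $\norm{q}{}{L^2}\le C(\Omega)\norm{q}{}{L^\infty}$) are absorbed into the constant. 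The one genuine subtlety is that pointwise in $(y,\phi)$ the data $Rq(y,\theta,\cdot)\in L^\infty\cap H^{-2}$ do \emph{not} control any $L^2$ norm — the sequence $\cos(N\psi)$ shows this — so the interpolation must be done only after the $H^1$ norm has been averaged over the whole family of great circles, where the smoothing $\Rad q\in H^1(\R\times\Sph^2)$ is genuinely available; keeping track of the degeneracy of $(\phi,\psi)\mapsto\omega$ (handled by the choice of $J$) and of the redundant variable $y$ is the remaining bookkeeping.
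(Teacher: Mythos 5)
Your proof is correct and is essentially the paper's argument: both interpolate the $H^{-2}$ information of Theorem \ref{twoplanes} against a one-derivative bound for the Radon transform over the time zone, exploiting the Jacobian bound $|\cos\psi|\gtrsim\delta$ on $J$ and the redundancy of the variable $y$; the paper takes the second endpoint in the form $W^{1,1}$ in $\psi$ with $L^1$ outer averages (its estimate \eqref{X-ray2}) and interpolates directly to $L^{6/5}$, whereas you take $H^1(J)$ with squared averages, interpolate to $L^2(J)$, embed into $L^{6/5}(J)$ and finish with H\"older in $(y,\phi)$ --- a cosmetic difference with the same exponent $1/3$. The one point to repair is the citation of the smoothing estimate: the inequality recalled from \cite{Nat} (and quoted earlier in the paper) controls only the $s$-regularity of $\Rad q$, while your pointwise bound on $\partial_\psi Rq$ also involves the tangential derivative $\nabla_\omega \Rad q$; the full bound $\|\Rad q\|_{H^1(\R\times\Sph^2)}\leq C(\Omega)\|q\|_{L^2}$ (angular derivatives included) is true for $q$ supported in $\Omega$, but it is obtained from the identity $\nabla_\eta \Rad q(s,\eta)=\partial_s \Rad(xq)(s,\eta)$ for the homogeneous degree $-1$ extension --- exactly the device the paper uses for its $\partial_\psi$ term --- so you should state or cite that extra step rather than attribute it to Natterer's theorem as written.
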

 
\begin{proof}
We  will obtain (\ref{X-ray})  by interpolation of the following estimates.
 \begin{multline}\label{X-ray1}
  \int _{ B(0, \alpha)}\int _{-\delta}^\delta   \| Rq(y, \theta(\phi), \eta( \phi, \cdot))\|_{H^{-2}(J)}\, \dd\phi \, \dd y \leq \\
   C \left( \tau^{-1/4} \norm{q}{1/2}{L^\infty(\R^n)} +  \e^{c\tau} \norm{\Lambda_{q_1} - \Lambda_{q_2}}{1/4}{} \right).
   \end{multline}   
   and
   \begin{multline}\label{X-ray2}
\int _{ B(0, \alpha)}\int _{-\delta}^\delta \int_J  ( |\partial_\psi Rq(y, \theta(\phi), \eta( \phi, \psi))|\\ +
    |Rq(y, \theta(\phi), \eta( \phi, \psi))|  ) \, \dd \psi \, \dd\phi \, \dd y  \leq  
    C\|q\|_{L^2}.
\end{multline} 
Estimate (\ref{X-ray1}) follows  easily from  (\ref{2-plane1}).
To prove estimate (\ref{X-ray2}) we start with the derivative term, we change the order of integration  and write $ y=s\theta \times \eta + y'$ with $y' \in [ \theta, \eta ]$, 
 \begin{multline*}
 \int _{ B(0, \alpha)}\int _{-\delta}^\delta \int_J  |\partial_\psi Rq(y, \theta(\phi), \eta( \phi, \psi))| \, \dd\psi \, \dd\phi \,  \dd y \\ \leq 
 \int _{-\delta}^\delta \int_J \int_{-\alpha}^{\alpha}\int _{B_\omega}|\partial_\psi Rq(s\omega+y', \theta(\phi), \eta( \phi, \psi))| \dd y' \, \dd s
\, \dd\psi \, \dd\phi,
\end{multline*}
where $\omega=\omega(\phi,\psi)$ and $B_\omega= B(0,\alpha)\cap[ \theta, \eta ] $.

Since $y'$ does not change  the  2-plane X-ray transform, we write  the  above as
$$C\alpha^2\int _{-\delta}^\delta \int_J \int_{-\alpha}^{\alpha} |\partial_\psi Rq(s\omega, \theta(\phi), \eta( \phi, \psi))| \,\dd s \, \dd\psi \, \dd\phi,$$
 This  expression can be  written in terms of the Radon transform as
\begin{multline} C\alpha^2\int _{-\delta}^\delta \int_J \int_{-\alpha}^{\alpha} |\partial_\psi \Rad q(s,  \omega(\phi , \psi))| \,\dd s \, \dd\psi\, \dd\phi
\\ \label{radon3d}
=C\alpha^2\int _{-\delta}^\delta \int_J \int_{-\alpha}^{\alpha} |\partial_\psi \Rad q(s,  \eta(\phi , \psi-\pi/2))| \, \dd s \, \dd\psi \, \dd\phi.
\end{multline}
 To simplify the notation, following \cite{Nat}, we  will use the homogeneous of  degree $-1$ extension  of $\Rad(s, \eta)$ to $\eta\in \R^3$, to write 
 $$\partial_\psi \Rad q(s,  \eta(\phi, \psi-\pi/2))=\nabla_\eta\Rad q(s, \eta)\cdot \frac{\partial\eta}{\partial \psi}= \frac{\partial}{\partial s}\Rad(  xq)(s, \eta)\cdot \frac{\partial\eta}{\partial \psi},
 $$ 
 then (\ref{radon3d}) can be  bounded by
 \begin{equation}
 C\alpha^2  \int_{-\alpha}^{\alpha}\int _{-\delta}^\delta  \int_J \bigg|\frac{\partial}{\partial s}\Rad(xq)(s,  \eta(\phi , \psi-\pi/2))\bigg| \, \dd\psi \, \dd\phi \, \dd s.
\end{equation}
From  the fact that on $\tilde\Gamma$ one has
$  \delta/8\leq|\frac{d\sigma(\eta)}{d\phi d\psi}|= |\cos \psi |\leq 1-\delta/8$, we obtain that this can be bounded by
\begin{align*}
&\leq\frac{ \alpha^2}{\delta} \int_{-\alpha}^{\alpha}\int _{\tilde \Gamma} \bigg|\frac{\partial}{\partial s}\Rad(xq)(s,  \eta )\bigg| \,\dd\sigma(\eta) \, \dd s.
\intertext{By using Cauchy-Schwarz inequality, this is majorized by}  
&\leq \frac{\alpha^{5/2}}{\delta^{1/2}}\left( \int_{-\alpha}^{\alpha}\int _{\tilde \Gamma} \bigg|\frac{\partial}{\partial s}\Rad(xq)(s,  \eta )\bigg|^2 \dd\sigma(\eta)
\,\dd s\right)^{1/2} \\
&\leq C \|\Rad(xq)\|_{H^1(\mathbb{R}\times\Sph^2  )}.
\end{align*}
Since in dimension $n=3$, we have 
$$\|\Rad(xq)\|_{H^1(\mathbb{R}\times \Sph^2)}\leq C\|(1+|x|)q\|_{L^2},$$ 
this fact, together with  an easier estimate for the zero order term in  (\ref{X-ray2}) give, 
$$\int_{B(0,\alpha)}\int_{\theta \in (-\delta, \delta)}\int _{S_\theta}|
\partial_ \eta Rq(y\cdot \theta \times \eta, \theta \times\eta)| \, \dd\eta \,\dd\theta \, \dd y\leq   C( \||x|q\|_{L^2} + \| q\|_{L^2}),$$
this ends the proof   (\ref{X-ray2}) and the Lemma.
 \end{proof}
 Now we return to the standard coordinates.
 
 \begin{clry}\label{radon3} (n=3) 
 In the conditions of proposition \ref{twoplanes}, assume that $B(0,\alpha)\subset N$. Then 
  \begin{multline} \label{radonDirichlet-to-Neumann}
  \int _{ -\alpha/2}^{ \alpha/2} \int_{\Sph^2}|\Rad q (s,\omega)| \, \dd\sigma(\omega) \,\dd s \\ \leq C \left( \tau^{-1/4} \norm{q}{1/2}{L^\infty(\R^n)} 
  +  \e^{c\tau} \norm{\Lambda_{q_1} - \Lambda_{q_2}}{1/4}{} \right)^{\frac{1}{3}},
  \end{multline}
  where $\Rad$ denotes  the Radon transform.
  \end{clry}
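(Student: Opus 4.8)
The plan is to derive Corollary~\ref{radon3} from the local estimate \eqref{X-ray} of the preceding Lemma, by translating the bound on the $2$-plane transform, written in the ``time-zone'' coordinates $(\phi,\psi)$, into a bound on the genuine Radon transform $\Rad q(s,\omega)$ in the standard coordinates $(s,\omega)\in\R\times\Sph^2$. As a first step I would replace the $L^{6/5}$ in $\psi$ by a plain $L^1$: since $J$ has length at most $2\pi$, H\"older's inequality in $\psi$ gives $\int_J|Rq|\,\dd\psi\leq(2\pi)^{1/6}\big(\int_J|Rq|^{6/5}\,\dd\psi\big)^{5/6}$, so \eqref{X-ray} upgrades to
\[
\int_{B(0,\alpha)}\int_{-\delta}^\delta\int_J |Rq(y,\theta(\phi),\eta(\phi,\psi))|\,\dd\psi\,\dd\phi\,\dd y \leq C\big(\tau^{-1/4}\norm{q}{1/2}{L^\infty(\R^n)}+\e^{c\tau}\norm{\Lambda_{q_1}-\Lambda_{q_2}}{1/4}{}\big)^{1/3}.
\]

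Next I would perform the change of variables. For fixed $(\phi,\psi)$ put $\omega=\omega(\phi,\psi)=\theta(\phi)\times\eta(\phi,\psi)=\eta(\phi,\psi-\pi/2)$, so that $Rq(y,\theta(\phi),\eta(\phi,\psi))=\Rad q(\langle y,\omega\rangle,\omega)$ depends only on $s=\langle y,\omega\rangle$. Splitting $y=s\omega+y'$ with $y'\in\omega^\perp=[\theta(\phi),\eta(\phi,\psi)]$, and noting that for $|s|<\alpha/2$ the disk $\{y'\in\omega^\perp:|y'|<\alpha/2\}$ is contained in $B(0,\alpha)$ and has area $\pi\alpha^2/4$, integrating out $y'$ gives
\[
\int_{B(0,\alpha)}|Rq(y,\theta(\phi),\eta(\phi,\psi))|\,\dd y \geq \frac{\pi\alpha^2}{4}\int_{-\alpha/2}^{\alpha/2}|\Rad q(s,\omega(\phi,\psi))|\,\dd s.
\]
I would then change $(\phi,\psi)\mapsto\omega=\eta(\phi,\psi-\pi/2)$ over the restricted time zone $\tilde\Gamma_{\omega_0,\theta_0}(\delta)$ of \eqref{tz}; by the computation in the proof of the Lemma one has $\dd\sigma(\omega(\phi,\psi))=|\cos\psi|\,\dd\phi\,\dd\psi$ with $|\cos\psi|\leq 1$ there, hence $\dd\phi\,\dd\psi\geq\dd\sigma(\omega)$, and one arrives at
\[
\int_{-\alpha/2}^{\alpha/2}\int_{\tilde\Gamma_{\omega_0,\theta_0}(\delta)}|\Rad q(s,\omega)|\,\dd\sigma(\omega)\,\dd s \leq C_{\alpha,\delta}\big(\tau^{-1/4}\norm{q}{1/2}{L^\infty(\R^n)}+\e^{c\tau}\norm{\Lambda_{q_1}-\Lambda_{q_2}}{1/4}{}\big)^{1/3}.
\]

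To finish I would remove the restriction to a single time zone. Because $\Sigma=\Sigma_{4\delta}$ is the full band of directions between the two hyperplanes parallel to $H$ at distance $4\delta$ of the origin, every great circle $\omega_0^\perp$ meets the central great circle of $\Sigma_\delta$, so for each $\omega_0\in\Sph^2$ there is an admissible orthonormal frame $(\eta_0,\theta_0,\omega_0)$ with $\theta_0\in\Sigma_\delta$. Since each $\tilde\Gamma_{\omega_0,\theta_0}(\delta)$ is relatively open in $\Sph^2$, a compactness argument produces finitely many frames whose restricted time zones cover $\Sph^2$; summing the last display over them and using $\mathbf{1}_{\Sph^2}\leq\sum_k\mathbf{1}_{\tilde\Gamma_k}$ yields \eqref{radonDirichlet-to-Neumann}, with $C$ depending only on $\alpha$ and $\delta$.

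I expect the only delicate point to be the bookkeeping of this last change of variables together with the covering: keeping track of the $\pi/2$ rotation relating $\eta$ and $\omega$, checking that $\tilde\Gamma$ is designed precisely to stay away from the polar singularities $\psi\in\{0,\pi\}$ of the geodesic polar coordinates and from the collapse locus $\psi=\pm\pi/2$ where $\eta\mapsto\omega$ degenerates, and verifying that finitely many such zones genuinely cover $\Sph^2$ under the constraint $\theta_0\in\Sigma_\delta$. Everything else reduces to the interpolation already built into the Lemma and to elementary estimates.
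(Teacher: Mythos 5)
Your proposal is correct and follows essentially the same route as the paper: H\"older's inequality in $\psi$ to pass between the $L^{6/5}$ and $L^1$ norms, the identification $Rq(y,\theta,\eta)=\Rad q(\langle y,\omega\rangle,\omega)$ with the splitting $y=s\omega+y'$ over the ball, the Jacobian bound $\dd\sigma(\omega)=|\cos\psi|\,\dd\phi\,\dd\psi$ on the restricted time zone, and a finite covering of $\Sph^2$ by zones $\tilde\Gamma_{\omega_0,\theta_0}(\delta)$ with $\theta_0\in\Sigma_\delta$. The only difference is cosmetic: you run the chain from the local estimate up to the Radon transform, while the paper starts from the spherical integral, localizes with a partition of unity and averages in $y'$ to reach the $2$-plane transform, which is the same computation in reverse order.
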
  
  \begin{proof}
Let us take the open covering of $\Sph^2$ given by 
$\{\tilde\Gamma_a(\delta)\}_{ a\in A}$, see\eqref{tz}, where $A= \{(\omega_0, \theta_0)\in \Sph^2\times\Sigma_\delta: \langle\omega_0, \theta_0\rangle=0\}$. By taking a finite subcovering and  a partition of unity subordinated to it, we might reduce to prove  the statement locally for the  sets $\tilde\Gamma_a(\delta)\subset \Sph^2$. 
Then, for the local coordinates given in the lemma, $\omega=\eta(\phi, \psi-\pi/2)$, we have 
\begin{multline}\label{rela}
\int _{ -\alpha/2}^{ \alpha/2} \int_{\tilde\Gamma_a}|\Rad q (s,\omega)| \, \dd\sigma(\omega) \, \dd s 
\\ \leq \frac{C}{\delta}\int _{ -\alpha/2}^{ \alpha/2}\int_{-\delta}^\delta\int_J | \Rad q (s,\eta(\phi,\psi))| \, \dd\phi \, \dd\psi \, \dd s.
\end{multline}
For any $y\in \R^3$ such that $\langle y,\eta(\phi,\psi)\rangle= s$  we have in terms of the  two-plane transform
$$\Rad q (s,\eta(\phi,\psi))= R q(y, \theta(\phi),\eta(\phi, \psi)). $$
By taking $y\in B(0, \alpha/2)$, so that  $y= s\eta+ y'$ with $y'\in \eta^\perp$ , and  denoting $B_\alpha= B(0, \alpha/2)\cap\eta^\perp$,  we can write 
$$\Rad q (s,\eta(\phi,\psi))=|B_\alpha |^{-1}\int_{B_\alpha}|R q( s\eta+ y', \theta ,\eta )| \, \dd y'.$$
Inserting this in (\ref{rela}), we have
\begin{align*}
\int _{ -\alpha/2}^{ \alpha/2} \int_{\tilde\Gamma_a}&|\Rad q (s,\omega)| \, \dd\sigma(\omega) \, \dd s \\
&\leq C(\alpha, \delta)\int _{ -\alpha/2}^{ \alpha/2}\int_{-\delta}^\delta\int_J \int_{B_\alpha}|R q( s\eta+ y', \theta ,\eta )| 
\, \dd y' \, \dd\phi  \, \dd\psi \, \dd s \\
&\leq C(\alpha, \delta)\int_{B(0,\alpha)}\int_{-\delta}^\delta\int_J |R(y, \theta, \eta)| \, \dd\psi \, \dd\phi \, \dd y.
\end{align*}
The Corollary follows from the local estimate, together with  H\"older's inequality.
\end{proof}

\subsection{End of the proof  of Theorem \ref{KSUstability}}
We want to use the   Corollary \ref{radon3}  together  with Theorem \ref{th:localLOGstability}.  Assume  $x_0\in P$, the convex penumbra     from $N$, and let $y\in N$, so that $\langle x_0-y, \nu(x_0)\rangle=0$. To  simplify  notation we assume $y=0$ and  take $\alpha >0$, such that $B(0,\alpha)\subset N$. 

Assume $x_0\in \supp q$ (otherwise there is nothing to estimate) then the plane $H_0(0, \nu(x_0))$, see \eqref{hiperp}, is  a supporting plane of $\supp q$, as required  by (b) of  Theorem \ref{th:localLOGstability} and,  from the Corollary,  we have estimate \eqref{radonDirichlet-to-Neumann}.
We need, as required  by Theorem \ref{th:localLOGstability}, to use  the Radon transform $\Rad_{x_0}$ with respect to the  affine reference with center  at the point  $x_0\in P$.
Consider  $\beta= \frac{\alpha }{4 \delta}$, where 
$\delta= \sup\{ |z-y|: z\in \Omega, y \in N\}$.
Then  the set of planes $\{H_0(s,\omega):  |s|<\alpha/2\, ,  \omega \in S^2\}$ contains  the  set $\{H_{x_0}(s,\omega):  |s|<\alpha/8\, ,  \omega \in \Gamma\}$ where $\Gamma=\Gamma(\nu(x_0), \beta)$ is  defined in \eqref{gamma}.

This  allows us to write for $I=(-\alpha/8, \alpha/8)$,
\begin{gather*}
\int _{ I}(1+|s-\langle \omega,\zeta-y_0 \rangle|) \int_{\Gamma}|\Rad_{x_0} q (s,\omega)| \,\dd\sigma(\omega) \, \dd s \\
\leq C \int _{ -\alpha/2}^{ \alpha/2} \int_{\Sph^2}|\Rad q (s,\omega)| \, \dd\sigma(\omega) \, \dd s \\
\leq C \left( \tau^{-\frac{1}{4}} \norm{q}{1/2}{L^\infty(\R^n)} +  \e^{c\tau} \norm{\Lambda_{q_1} - \Lambda_{q_2}}{1/4}{} \right)^{\frac{1}{3}}.
\end{gather*}
Finally, the choice  $\tau=\frac{1}{8c}|\log \|\Lambda_{q_1}-\Lambda_{q_2}\|_{B\to F}| $, together with  Theorem \ref{th:localLOGstability} gives the estimate in Theorem \ref{KSUstability} for $G$ a neighborhood of $x_0\in P$. The claim  for  $G$  a neighborhood  of $P$ follows  by standard  arguments.

\end{section}

\end{document}